\newcommand{\bea}{\begin{eqnarray}}
\newcommand{\eea}{\end{eqnarray}}
\newcommand{\be}{\begin {equation}}
\newcommand{\ee}{\end{equation}}
\newcommand{\N}{\Bbb N}
\newcommand{\Z}{\mathbb{Z}}
\newtheorem{theorem}{Theorem}[section]
\newtheorem{corollary}[theorem]{Corollary}
\newtheorem{lemma}[theorem]{Lemma}
\newtheorem{proposition}[theorem]{Proposition}
\newtheorem{remark}[theorem]{Remark}
\newtheorem{conjecture}[theorem]{Conjecture}
\newcommand{\am}{\mathcal{W}(p)^{A_m}}
\newcommand{\triplet}{\mathcal{W}(p)}
\begin{document}

\title{$ADE$ subalgebras of the triplet vertex algebra $\triplet$: $A$-series}
\author{Dra\v zen Adamovi\' c}
\address{Department of Mathematics, University of Zagreb, Bijenicka 30, 10000 Zagreb, Croatia}
\email{adamovic@math.hr}
\author {Xianzu Lin }
\address{College of Mathematics and Computer Science, Fujian Normal University, Fuzhou, {\rm 350108}, China}
\email{linxianzu@126.com}
\author{Antun Milas}
\address{Department of Mathematics and Statistics,SUNY-Albany, 1400 Washington Avenue, Albany 12222,USA}
\email{amilas@albany.edu}
\date{ }
\maketitle

\begin{abstract}

Motivated by \cite{am1}, for every finite subgroup $\Gamma \subset PSL(2,\mathbb{C})$  we investigate the fixed point subalgebra $\triplet^{\Gamma}$ of the triplet vertex $\mathcal {W}(p)$, of central charge $1-\frac{6(p-1)^{2}}{p}$, $p\geq2$.
This part deals with the $A$-series in the ADE classification of finite subgroups of $PSL(2,\mathbb{C})$. First, we prove the $C_2$-cofiniteness of the $A_m$-fixed subalgebra $\triplet^{A_m}$. Then we construct a family of $\am$-modules, which are expected to form a complete set of irreps. As a strong support to our
conjecture, we prove modular invariance of (generalized) characters of the relevant (logarithmic) modules. Further evidence is provided by calculations in Zhu's algebra for $m=2$. We also present a rigorous proof of the fact that the full automorphism group of $\triplet$ is $PSL(2,\mathbb{C})$.

\end{abstract}







\section{Introduction}
Some of the most important examples of rational vertex algebras come from lattice vertex algebras (e.g. Moonshine Module).
More recently, many interesting examples of $W$-algebras have been constructed either as subalgebras of lattice vertex algebras
or as their cosets and  orbifolds.  For example, rational vertex algebras in the conjectural  $c=1$ classification  should all arise in this
way  (cf. \cite{dn} \cite{dj}). When going beyond unitary theories (easily achieved by deforming the
canonical quadratic Virasoro vector) the structure of lattice vertex algebra changes dramatically when viewed as a Virasoro algebra module,
and many "symmetries" (i.e. automorphisms) are broken.  Still, for certain values of central charge such lattice vertex algebras can have large interesting subalgebras. Triplet vertex algebra $\triplet$ is one important example; it is constructed as
the kernel of the short screening  of the rank one lattice vertex algebra with central charge $1-\frac{6(p-1)^2}{p}$. It is no longer rational, but as shown in \cite{am1}, it is $C_2$-cofinite.
We expect that $C_2$-property persists in the higher rank as well \cite{beijing} and for other vertex algebras of triplet-type \cite{am3}.


At the end of \cite{am1}, motivated by important works \cite{FGST1} \cite{FGST2} \cite{flohr}, it was pointed out that $\mathcal {W}(p)$ admits a hidden action of the Lie algebra $\frak{sl}_2(\mathbb{C})$ via derivations. Although this claim has not been proven, it is often assumed to be true due to considerations of quantum groups (cf \cite{FGST2}). Here we first present a proof of this claim for completeness \footnote{Of course, there is no natural action of ${sl}_2$ on all of $V_L$, for $p \geq 2$.} (see Theorem \ref{psl2} and Appendix). Our proof is based on the results from \cite{am1} and on an explicit construction of an automorphism $\Psi$ of order two. Because our Lie algebra acts via derivations,  its generators can exponentiate to $PSL(2,\mathbb{C})$ as a group of automorphisms of $\triplet$. Then, as we already mentioned in \cite{am1}, to each finite subgroup $\Gamma$ of $PSL(2,\mathbb{C})$, we can now associate  the orbifold subalgebra $\mathcal {W}(p)^{\Gamma}$.  The main purpose of this series of papers is to study the $C_{2}$-cofiniteness of $\mathcal {W}(p)^{\Gamma}$ and their representations. The finite subgroups of $PSL(2,\mathbb{C})$ are well known to follow the ADE classification (see \cite{do}). Up to conjugation, these are: cyclic groups, dihedral groups, and three exceptional finite subgroups (tetrahedral, octahedral and icosahedral groups).
Motivated by the the rational $c=1$ case \cite{dg},\cite{dj} and some general conjectures,  we expect that

\begin{conjecture} For each $\Gamma$ in the ADE classification, $\triplet^\Gamma$ is $C_2$-cofinite.
\end{conjecture}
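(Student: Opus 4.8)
The starting point is that $\triplet$ is $C_2$-cofinite by \cite{am1}, and that by Theorem \ref{psl2} every finite subgroup $\Gamma \subset PSL(2,\mathbb{C})$ acts on $\triplet$ by automorphisms of finite order. The conjecture is thus an instance of the general expectation that finite orbifolds of $C_2$-cofinite vertex algebras stay $C_2$-cofinite, and my plan is to reduce it, whenever possible, to \emph{cyclic} orbifolds. The basic tool is Miyamoto's theorem that $V^{\langle g\rangle}$ is $C_2$-cofinite as soon as $V$ is $C_2$-cofinite (of CFT type) and $g$ is an automorphism of finite order; crucially this requires no rationality, so it applies to our logarithmic setting. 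For the $A$-series $\Gamma$ is itself cyclic, so the statement follows at once. The direct, invariant-theoretic proof carried out below for $\am$ is nonetheless preferable here, since it simultaneously produces the explicit strong generators of $\am$ needed to construct its modules.

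For the remaining members of the classification I would exploit group structure. The dihedral groups ($D$-series) together with the tetrahedral and octahedral groups $A_4$ and $S_4$ (the $E_6$ and $E_7$ cases) are all \emph{solvable}. Picking a subnormal series $1=\Gamma_0 \triangleleft \Gamma_1 \triangleleft \cdots \triangleleft \Gamma_n=\Gamma$ with cyclic quotients $\Gamma_{i+1}/\Gamma_i$, and using $\triplet^{\Gamma_{i+1}}=(\triplet^{\Gamma_i})^{\Gamma_{i+1}/\Gamma_i}$, one applies the cyclic result inductively: each $\Gamma_i \triangleleft \Gamma_{i+1}$ has cyclic quotient acting by finite-order automorphisms on the already $C_2$-cofinite algebra $\triplet^{\Gamma_i}$, so $\triplet^{\Gamma_1},\dots,\triplet^{\Gamma}$ are all $C_2$-cofinite. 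This disposes of $A$, $D$, $E_6$ and $E_7$ uniformly.

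The real obstacle, and the reason the statement is phrased as a conjecture, is the icosahedral group (the $E_8$ case), which is isomorphic to the simple group $A_5$ and is therefore \emph{not} solvable. The inductive passage through cyclic quotients collapses, and one must confront a genuinely non-abelian, non-solvable orbifold. I expect this to be by far the hardest step. One line of attack is to dispense with solvability altogether via a descent argument: view $\triplet$ as a $\triplet^{A_5}$-module, show it is finitely generated with control on the $A_5$-isotypic components and the associated twisted modules, and deduce $C_2$-cofiniteness of the fixed-point subalgebra from that of $\triplet$. A more computational alternative is to describe $\triplet^{A_5}$ through the invariants of the $A_5$-action on the $\mathfrak{sl}_2$-triplet of generators of $\triplet$, and to combine the relevant Molien-series/invariant-theory count with the known $C_2$-spanning set of $\triplet$ to exhibit an explicit finite spanning set of $\triplet^{A_5}/C_2(\triplet^{A_5})$.
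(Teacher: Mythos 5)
The statement you are proving is labelled a \emph{conjecture} in the paper, and the paper in fact offers no proof of it in this generality: the only case it establishes is $\Gamma=A_m$ (Theorem \ref{de}/Theorem \ref{2a}), by an explicit, constructive route --- strong generators $E^{(m)}, F^{(m)}, H, \omega$ (Theorem \ref{1a}), the structure of $\mathcal{P}(\overline{M(1)})$ (Lemma \ref{3a}), and the Morris constant term identity (Lemma \ref{4a}) to produce the relations that force nilpotency of $\overline{\omega}$ and $\overline{H}$ in $\mathcal{P}(\am)$. Your route is genuinely different: reducing to cyclic orbifolds via Miyamoto's theorem and solvability. That strategy is legitimate and, modulo checking hypotheses, would settle the $A$-, $D$-, $E_6$- and $E_7$-cases --- more than the paper does for this statement. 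But be precise about what you are invoking: Miyamoto's cyclic-orbifold theorem (which postdates the results of \cite{am1} this paper builds on, and is not the reference \cite{miy} cited here) is stated for a \emph{simple} $C_2$-cofinite VOA of CFT type and an automorphism of \emph{prime} order; the general finite cyclic case and the solvable case then follow by induction along a composition series, which requires verifying at each stage that $\triplet^{\Gamma_i}$ is again simple (Dong--Mason Galois theory) and of CFT type. You should say this explicitly rather than quoting the theorem for arbitrary finite order. Note also that the paper's hands-on $A_m$ proof is not merely ``preferable for generators'': it is what actually yields the data (the Poisson algebra relations, the constants $C_{m,p}$) used throughout Sections 4--6.

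The genuine gap is the one you yourself flag: the icosahedral case $\Gamma\cong A_5$ is simple and non-solvable, the cyclic reduction collapses, and your two proposed ``lines of attack'' are programmatic sketches with no argument behind them. In particular, the Molien-series/invariant-theory idea runs into the standard obstruction for orbifold $C_2$-cofiniteness: $C_2(\triplet^{\Gamma})$ is in general strictly smaller than $C_2(\triplet)\cap\triplet^{\Gamma}$, so a finite spanning set of $\triplet/C_2(\triplet)$ together with a count of invariants does \emph{not} give a finite spanning set of $\triplet^{\Gamma}/C_2(\triplet^{\Gamma})$. This is precisely why even the $A_m$ case in the paper requires the nontrivial constant-term computation of Lemma \ref{4a} to manufacture relations inside $C_2(\am)$ itself. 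So your proposal correctly organizes the problem and identifies where the difficulty lies, but it is not a proof of the conjecture, and the $E_8$ case remains entirely open in your write-up.
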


This paper deals primarily with the case when $\Gamma=A_m$, that is, the automorphism group is a finite cyclic group of order $m$.
This case is easier to handle due to the fact that the oribifold algebra $\triplet^\Gamma$ can be defined as a subalgebra
of the orbifold subalgebra  $V_L^{\Gamma}$, so no $\frak{sl}_2(\mathbb{C})$ considerations are needed.
After we recall several structural results concerning the triplet vertex algebra and its orbifold $\triplet^{A_m}$ we prove

\begin{theorem}\label{de}
For $\Gamma$ of type $A_{m}$, the corresponding invariant subalgebra $\mathcal {W}(p)^{\Gamma}$ is $C_{2}$-cofinite.
\end{theorem}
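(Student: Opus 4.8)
The plan is to reduce the orbifold to a lattice computation and then transport the $C_2$-cofiniteness of $\triplet$ across the finite extension $\triplet \supset \am$. First I would make the $A_m$-action explicit: since $A_m$ is the image of a torus element of $PSL(2,\mathbb{C})$, it acts on the charge-$n$ component $\triplet^{(n)} = \triplet\cap (M(1)\otimes e^{n\alpha})$ by $\zeta^{n}$ with $\zeta=e^{2\pi i/m}$. As the short screening shifts every charge by the same amount, $\triplet=\bigoplus_n\triplet^{(n)}$ is charge-graded, hence $A_m$-stable, giving
\[
\am=\bigoplus_{m\mid n}\triplet^{(n)}=\triplet\cap V_{mL},
\]
a vertex subalgebra of the rank-one lattice vertex algebra $V_{mL}=V_{\mathbb{Z}m\alpha}$. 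This is the concrete content of the paper's remark that no $\mathfrak{sl}_2$-action is needed for the $A$-series. Moreover $\triplet=\bigoplus_{j=0}^{m-1}\triplet^{[j]}$ is a finite direct sum of $\am$-modules (the $\zeta^{j}$-eigenspaces), a finiteness that will be the engine of the argument.

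Next I would set up the $C_2$-estimate. Writing $U=\am$, recall from \cite{am1} that $R_{\triplet}=\triplet/C_2(\triplet)$ is finite-dimensional; since $C_2(\triplet)$ is charge-graded, only finitely many sectors $\triplet^{(n)}$ survive modulo $C_2(\triplet)$, and the images of $\omega$ together with the weight-$(2p-1)$ fields $E,H,F$ (of charge $+1,0,-1$) generate $R_{\triplet}$. I would then produce a finite strong generating set of $U$ consisting of $\omega$, the charge-$0$ field $H$ (which with $\omega$ already generates the singlet subalgebra $\triplet^{(0)}$), and two extremal fields $E^{[m]},F^{[m]}$ of charge $\pm m$, with $F^{[m]}=e^{-m\alpha}\in\triplet$ and $E^{[m]}$ its long-screening partner in the charge-$(+m)$ sector; the higher sectors $\triplet^{(\pm km)}$ are reached by iterated products of these. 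The goal is to show that each generator is nilpotent in $R_U=U/C_2(U)$ and that $R_U$ is spanned by finitely many monomials in them.

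The crux is this last point, and it is exactly where orbifold $C_2$-cofiniteness is genuinely nontrivial. A priori one only has $C_2(U)\subseteq C_2(\triplet)^{A_m}$ with \emph{strict} inclusion, and the naive averaging (Reynolds) projection yields merely that $U/C_2(\triplet)^{A_m}\cong (R_{\triplet})^{A_m}$ is finite-dimensional --- the wrong direction, since $R_U$ surjects onto this quotient. The hard part will therefore be to bound the gap $C_2(\triplet)^{A_m}/C_2(U)$. I would analyze invariant elements $a_{-2}b\in C_2(\triplet)$ with $a,b$ of opposite charges $\pm k$ not necessarily divisible by $m$, and use the explicit lattice operator product expansions in $V_{mL}$ together with the invariant extremal fields $E^{[m]},F^{[m]}$ to rewrite such a product, modulo $C_2(U)$, in terms of the invariant generators. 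Because only finitely many charge sectors contribute modulo $C_2(\triplet)$, this is a finite verification, and it forces $\dim R_U<\infty$.

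Thus the difficulty is the comparison of $C_2$-spaces for the orbifold rather than the (routine) lattice bookkeeping, and it is the finite decomposition $\triplet=\bigoplus_j\triplet^{[j]}$ into $U$-modules that keeps the computation finite. As a robustness check, one can alternatively invoke the general fact that the fixed-point subalgebra of a $C_2$-cofinite vertex algebra under a finite cyclic automorphism group is again $C_2$-cofinite, which applies here verbatim.
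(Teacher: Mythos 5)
Your setup is correct and matches the paper's: the identification $\am=\triplet\cap V_L^{\sigma}$ and the finite strong generating set $\{\omega,H,E^{(m)},F^{(m)}\}$ are exactly Theorem 3.1 there. But the core of the argument --- why $\mathcal{P}(\am)=\am/C_2(\am)$ is finite-dimensional --- is not actually established in your proposal. Your main route, bounding the gap $C_2(\triplet)^{A_m}/C_2(\am)$, is left as a claimed ``finite verification,'' and it is not one: the invariant elements $a_{-2}b$ with $a,b$ of opposite non-invariant charges span an infinite-dimensional subspace of $C_2(\triplet)\cap\am$ even within a single pair of charge sectors, and you give no mechanism forcing them into $C_2(\am)$ modulo a finite-dimensional space. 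Your fallback --- ``the general fact that the fixed-point subalgebra of a $C_2$-cofinite vertex algebra under a finite cyclic group is again $C_2$-cofinite, which applies verbatim'' --- is not available here; that is precisely the content of the paper's Conjecture 1 (and of Miyamoto's later cyclic-orbifold theorem), and invoking it would make the theorem circular.

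The paper's proof works intrinsically in $\mathcal{P}(\am)$ and never compares $C_2$-spaces. Since the four generators strongly generate, $\mathcal{P}(\am)$ is generated as a commutative algebra by $\overline{E^{(m)}},\overline{F^{(m)}},\overline{H},\overline{\omega}$; lattice charge considerations give $\overline{E^{(m)}}^2=\overline{F^{(m)}}^2=0$, and the singlet structure gives $\overline{H}^2=C_p\overline{\omega}^{2p-1}$. The one genuinely nontrivial input --- which your proposal is missing --- is Lemma 3.3: $\overline{F^{(m)}_{-n}E^{(m)}}\neq 0$ in $M(1)/C_2(M(1))$ for some $n\geq 2$, proved via the Morris constant term identity (the residue computation shows the relevant constant term $(-1)^{mp}\prod_{i=0}^{2m-1}\binom{(-2m+i)p}{p-1}\frac{(p-1)!((i+1)p)!}{(p-1+ip)!\,p!}$ is nonzero). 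Since $F^{(m)}_{-n}E^{(m)}$ with $n\geq 2$ lies in $C_2(\am)$ by definition, while in $\mathcal{P}(\overline{M(1)})$ it equals a nonzero multiple of $\overline{\omega}^k\overline{H}$ or $\overline{\omega}^k$, one concludes that $\overline{\omega}$ and $\overline{H}$ are nilpotent in $\mathcal{P}(\am)$; all four generators being nilpotent, the algebra is finite-dimensional. Without this non-vanishing computation (or a substitute for it), your argument does not close.
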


After this we move on to study structure of irreducible $\am$-modules. Irreducible modules come in three families: $\Lambda$, $\Pi$ and $R$-series,
coming from the $\triplet$-modules of type $\Lambda$ and $\Pi$ and twisted $V_L$-modules, respectively. We prove the following result
\begin{theorem} \label{irrep}
Three series of irreducible modules contribute with $2pm^2$ irreducible $\am$-modules. Conjecturally, these are all irreducible modules, up to isomorphism.
\end{theorem}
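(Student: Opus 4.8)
The plan is to exhibit the three families explicitly and to count them; the completeness assertion is only conjectured and I do not attempt it. Fix a generator $\sigma$ of $A_m$. Using the realization $\triplet\subset V_L$, I would take $\sigma$ to be the order-$m$ Heisenberg (lattice) automorphism acting by $e^{2\pi i n/m}$ on the $\alpha$-charge-$n$ subspace of $V_L$; this is the key simplification, since it lets me avoid the $\mathfrak{sl}_2(\mathbb C)$ machinery altogether and instead manipulate explicit Fock and lattice-coset modules, exactly as the introduction promises for the $A$-series. Because $\sigma$ commutes with the Virasoro action and $A_m$ lies in the connected group $PSL(2,\mathbb C)$, which cannot permute the finite set of isomorphism classes of irreducible $\triplet$-modules nontrivially, every such module is $\sigma$-stable.

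For the $\Lambda$- and $\Pi$-series I would decompose each of the $2p$ irreducible $\triplet$-modules of types $\Lambda$ and $\Pi$ into $\sigma$-eigenspaces, $M=\bigoplus_{j=0}^{m-1}M^{(j)}$. Orbifold theory for $C_2$-cofinite vertex algebras---applicable since $\triplet$ is $C_2$-cofinite by \cite{am1} and $\am$ is $C_2$-cofinite by Theorem \ref{de}---then shows that each $M^{(j)}$ is an irreducible $\am$-module and that $M^{(i)}\cong M^{(j)}$ over $\am$ forces $i=j$. This produces $2p\cdot m=2pm$ pairwise non-isomorphic untwisted modules, which are the $\Lambda$- and $\Pi$-series.

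For the $R$-series I would construct, for each nontrivial twist $k=1,\dots,m-1$, the $\sigma^{k}$-twisted $V_L$-modules. Because $\sigma$ is a lattice automorphism of phase type, these are realized concretely as lattice-coset modules carrying fractional $\alpha$-charge in a fixed coset of $\tfrac{1}{2p}\Z$ determined by $k$; restricting them to $\triplet$ yields twisted $\triplet$-modules, and decomposing the residual $\sigma$-action splits each into $m$ irreducible $\am$-modules. A count of the fractional-charge modules (the twist sectors being of equal size by symmetry) shows that each of the $m-1$ sectors contributes $2pm$ irreducibles, so the $R$-series has $(m-1)\cdot 2pm=2pm(m-1)$ members. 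Adding the untwisted contribution gives $2pm+2pm(m-1)=2pm^{2}$, as claimed. To rule out accidental coincidences across the three families I would attach to each module the triple consisting of its lowest conformal weight, its $\sigma$-eigenvalue index $j$, and its twist level $k$; these are isomorphism invariants and take exactly the required number of distinct values.

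The main obstacle is the $R$-series. The delicate points are to single out the irreducible twisted $\triplet$-modules inside the twisted $V_L$-modules and, above all, to prove that each resulting piece stays irreducible after restriction to the strictly smaller algebra $\am$ rather than to $V_L^{A_m}$; here the explicit lattice realization is indispensable, but a rigorous argument needs good control of $\am$ acting on itself, via $C_2$-cofiniteness together with quantum-dimension and character estimates. Finally, proving that these $2pm^{2}$ modules exhaust all irreducibles is substantially harder and is precisely what remains conjectural, to be supported only by the modular-invariance computation of the characters and by the Zhu-algebra calculation at $m=2$.
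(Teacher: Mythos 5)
Your construction and count coincide with the paper's: decompose each of the $2p$ untwisted modules $\Lambda(i),\Pi(i)$ into its $m$ $\sigma$-eigencomponents (these are exactly the $\Lambda(i)_0,\Lambda(i)^{\pm}_j,\Lambda(i)_m,\Pi(i)^{\pm}_j,\Pi(i)_m$ of Section \ref{konstrukcija-modula}), and take the $2p$ irreducible $\sigma^i$-twisted lattice modules $V_{L+\frac{j-i/m}{2p}\alpha}$ for each $i=1,\dots,m-1$, each splitting into $m$ pieces $R(i,j,k)$, giving $2pm+2pm(m-1)=2pm^2$. The eigenspace-projection argument you invoke for irreducibility of the untwisted components is also the paper's proof of Theorem \ref{irrL}. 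However, at the two points where real work is required your proposal has gaps. First, for the twisted series you reach for ``quantum-dimension and character estimates,'' which is not a viable tool here: $\am$ is $C_2$-cofinite but not rational, the module category is logarithmic, and no such machinery is available. The paper's actual (and much simpler) key observation is that every Fock summand $M(1)\otimes e^{\frac{j-i/m}{2p}\alpha+(ms+k)\alpha}$ occurring in $R(i,j,k)$ is already \emph{irreducible as a Virasoro module} by the Feigin--Fuchs classification, because the fractional charge $\frac{j-i/m}{2p}$ never lands on a degenerate value; irreducibility over $\am\supset \mathrm{Vir}$ then follows at once from the explicit lattice realization. This is the idea your outline is missing.

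Second, your proposed separating invariant --- lowest conformal weight together with ``$\sigma$-eigenvalue index'' and ``twist level'' --- does not work as stated: the latter two are data of the construction, not isomorphism invariants of the abstract $\am$-module, and the lowest conformal weight alone genuinely fails to separate the candidates (for instance $\Lambda(i)^{+}_j$ and $\Lambda(i)^{-}_j$ share the lowest weight $h_{i,2j+1}$). The paper resolves this by using the pair of eigenvalues of $(L(0),H(0))$ on the lowest component, where $H=Qe^{-\alpha}\in\am$: the $H(0)$-eigenvalues $\pm\binom{-2jp-1+i}{2p-1}$, together with the computation of Lemma \ref{parametar} for the twisted modules and the count $|S_m|=(m^2+1)p$ in Theorem \ref{parametri}, distinguish all $2pm^2$ modules. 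Your appeal to general cyclic-orbifold (quantum Galois) theory could in principle substitute for part of this, but for a non-rational $C_2$-cofinite algebra the required non-isomorphism statements across distinct modules and distinct twist sectors are not off-the-shelf, whereas the explicit $(L(0),H(0))$ lowest-weight computation is elementary and self-contained.
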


Next, we consider the $m=2$ case via Zhu's algebra $A(\am)$
We give further evidence for the above conjecture based on the properties of $A(\triplet^{A_2})$.

Finally, in the last section, we compute graded characters of irreducible $\am$-modules and contemplate their modular closure.
As in \cite{am1}, we show that the space of characters of the proposed modules close under $SL(2,\mathbb{Z})$, but only after we include
certain generalized characters coming from logarithmic modules. Our result in this direction is

\begin{theorem}
The vector space generated by the characters and generalized characters coming from these three
series is $pm^2+2p-1$-dimensional. Moreover, compared to $\triplet$, no new generalized characters occur in $\am$.
\end{theorem}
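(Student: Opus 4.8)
The plan is to write down all three series of (generalized) characters in closed form and then determine the dimension of their $SL(2,\Z)$-closure directly. First I would realize the $\Lambda$- and $\Pi$-series graded dimensions as $\Z_m$-isotypic projections of the $2p$ irreducible $\triplet$-characters, using
\[
\chi_M^{(j)}(\tau)=\frac{1}{m}\sum_{k=0}^{m-1}\zeta^{-jk}\,\mathrm{ch}_M(\sigma^k,\tau),\qquad \zeta=e^{2\pi i/m},
\]
where $\sigma$ is the order-$m$ automorphism acting on $V_L$ and $\mathrm{ch}_M(\sigma^k,\tau)=\mathrm{tr}_M\,\sigma^k q^{L_0-c/24}$. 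The $R$-series characters are treated analogously, starting from the $\sigma^k$-twisted $V_L$-modules for $k=1,\dots,m-1$ and projecting onto the residual $\Z_m$-isotypic components. Since every $\triplet$-character is, by the lattice realization, a combination of level-$p$ Jacobi theta functions, their $z$-derivatives, and false (partial) theta functions divided by $\eta(\tau)$, the insertion of $\sigma^k$ amounts to a shift of the elliptic variable, and each character in the three series becomes an explicit expression in these functions.

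Next I would compute the linear span of the genuine characters. Organizing the theta parts and the false-theta parts separately, and collecting the $\sigma^k$-insertions over all modules and all $k$, produces linear relations that collapse the naive count of $2pm^2$ irreducibles (Theorem \ref{irrep}) onto a much smaller independent set. The expectation, which I would confirm by exhibiting an explicit basis, is that the genuine characters span a space of dimension $pm^2+p$; as a consistency check this equals $2p$ when $m=1$, matching the number of irreducible $\triplet$-characters, and for $m\geq 2$ the discrepancy $2pm^2-(pm^2+p)=p(m^2-1)$ records exactly the expected logarithmic coincidences among distinct irreducibles.

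I would then apply the modular group. The transformation $T\colon\tau\mapsto\tau+1$ acts diagonally through the conformal weights modulo $\Z$ and preserves the span. The transformation $S\colon\tau\mapsto-1/\tau$ is the substantive step: the genuine theta pieces transform into theta pieces and remain inside the span, while the false-theta pieces are not modular, and their regularized $S$-transform produces the $\tau$-linear terms, i.e. the generalized (pseudo-trace/logarithmic) characters. The decisive claim is that, after accounting for all isotypic and twisted sectors, this $\tau$-linear defect spans only the $(p-1)$-dimensional space already present for $\triplet$ — heuristically because the shifts $z\mapsto z+k\alpha$ coming from $\sigma^k$ move the theta arguments but leave the underlying non-modular completions proportional to the same $\triplet$ false thetas. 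Hence no new generalized characters are created by the orbifold, and adding these $p-1$ generalized characters to the $pm^2+p$ genuine ones yields the claimed dimension $pm^2+2p-1$, which again specializes correctly to $3p-1$ at $m=1$.

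The main obstacle is the $S$-transform of the non-modular false-theta constituents in the presence of the elliptic-variable shifts induced by $\sigma^k$. I would handle this by the standard regularization used in \cite{am1}: introduce an auxiliary small shift of the elliptic argument, apply the exact Jacobi-theta modular transform, and extract the finite part together with the $\tau$-linear residue as the regulator is removed. The second genuine difficulty is the bookkeeping required to prove that these $\tau$-linear contributions do not enlarge the generalized-character space beyond that of $\triplet$; I expect this to follow from the parity and shift symmetry of the false theta functions, which force the anomalous pieces in the nontrivial isotypic and twisted sectors to cancel in the $\tau$-linear order, leaving a residue that coincides with the $\triplet$ generalized characters by a direct computation.
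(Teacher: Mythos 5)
Your overall strategy matches the paper's: write the characters explicitly as theta-type series over $\eta(\tau)$, compute the span of the genuine characters, and then close under $S$. Your route to the explicit formulas (isotypic projections $\frac{1}{m}\sum_k\zeta^{-jk}\,\mathrm{ch}_M(\sigma^k,\tau)$ of twisted traces) is a mild variant of the paper's, which instead reads the characters off the explicit Virasoro decompositions of $\Lambda(i)^\pm_j$, $\Pi(i)^\pm_j$ and $R(i,j,k)$; both lead to the same expressions, and your dimension counts ($pm^2+p$ for the genuine span, $p-1$ added by the $S$-closure, and the $m=1$ consistency checks) are all correct.

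However, the decisive step is left as an unproved ``expectation,'' and the mechanism you give for it is misattributed. The non-modular constituents of these characters are not false (partial) theta functions --- none occur in the characters of the $\Lambda$-, $\Pi$- or $R$-series --- but the derivative theta functions $\partial\Theta_{\lambda,k}/\eta$, which have weight $1$ and therefore pick up the $\tau$-linear terms under $S$. A priori the $\Lambda$- and $\Pi$-characters involve derivative thetas at level $pm^2$, and if those survived independently the $S$-closure would require up to order $pm^2$ new generalized characters $\tau\,\partial\Theta_{\lambda,pm^2}/\eta$, not $p-1$. The whole theorem hinges on the collapse identity
$$\frac{1}{m}\sum_{j=0}^{m-1}\partial\Theta_{sm+2pj,\,pm^2}(\tau)=\partial\Theta_{s,p}(\tau),$$
which shows that the precise combinations of level-$pm^2$ derivative thetas occurring in ${\rm ch}_{\Lambda(i)^\pm_j}$ and ${\rm ch}_{\Pi(i)^\pm_j}$ reduce to the level-$p$ derivative thetas $\partial\Theta_{i,p}/\eta$, $i=1,\dots,p-1$, already present for $\triplet$ (the $R$-characters are pure thetas and contribute no derivative terms at all). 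Your appeal to ``parity and shift symmetry'' of false thetas and to a regularized $S$-transform does not supply this identity, so as written the claim that no new generalized characters occur --- and hence the count $pm^2+2p-1$ rather than something larger --- is not established. With that lemma in hand, the rest of your argument (the genuine span is $\Theta_{i,pm^2}/\eta$ for $i=0,\dots,pm^2$ plus $\partial\Theta_{i,p}/\eta$ for $i=1,\dots,p-1$, and $S$ adjoins exactly $\tau\,\partial\Theta_{i,p}/\eta$) goes through as in the paper.
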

This modular invariance result is a strong evidence that our classification of irreps is complete.




We finish this introduction with a few words about a possible Kazhdan-Lusztig dual quantum group of $\triplet^{\Gamma}$.
It is known that the abelian category $\overline{U}_q(sl_2)$-Mod is equivalent to
the category $\triplet$-Mod \cite{nt}. Therefore it seems natural to have an embedding
$$ \overline{U}_q(sl_2) \hookrightarrow \overline{U}_q(sl_2)^{\Gamma}, $$
where  $\overline{U}_q(sl_2)^{\Gamma}$ is yet-to-be defined finite-dimensional quantum group,
such that $\triplet^\Gamma$-Mod and $\overline{U}_q(sl_2)^{\Gamma}$-Mod are equivalent (as abelian categories).
What is puzzling to us at this stage, is the meaning of the ADE classification on the quantum group side.

In our forthcoming paper \cite{alm2} we  shall study the representation theory of the   vertex algebra $\triplet ^{D_m}$. We also plan to study orbifolds of the vertex (super)algebras introduced  in \cite{AdM-striplet} and \cite{AdM-IMRN}.

\smallskip

{\bf Acknowledgements:} We thank Ole Warnaar for useful discussion and comments regarding the constant term identities
studied in the paper.

\section{The triplet vertex algebra $\triplet$ as a $\frak{sl}_2(\mathbb{C})$--module}

We begin with some preliminaries about the triplet vertex algebra $\mathcal {W}(p)$; more details can be find in \cite{am1}.
Fix an integer $p\geq2$. Let $L=\mathbb{Z}\alpha$ be a rank one lattice with a bilinear form $\langle\cdot,\cdot\rangle$ given by
$$\langle\alpha,\alpha\rangle=2p,$$
Let $$V_{L}=\mathcal{U}(\widehat{\mathfrak{h}}_{<0})\otimes\mathbb{C}[{L}]$$ be the corresponding lattice vertex operator algebra \cite{ll},
 where $\mathfrak{\widehat{h}}$ is the affinization of $\mathfrak{h}=\mathbb{C}\alpha$ induced by the bilinear form, and $\mathbb{C}[{L}]$ is the group algebra of $L$.
In $V_{L}$ we choose the following conformal vector:
$$\omega=\frac{\alpha(-1)^{2}}{4p}\textbf{1}+\frac{p-1}{2p}\alpha(-2)\textbf{1}.$$
Thus $V_{L}$ is a Virasoro algebra module of central charge
$$c_{p,1}=1-\frac{6(p-1)^{2}}{p}.$$
We recall the following two results \cite{a,am,am1}:
\begin{lemma}\label{zz}
let $Q=e^{\alpha}_{0}:V_{L}\rightarrow V_{L}$ be the screening operator and let $n\in\mathbb{Z}_{\geq0}$,
then $Q^{i}e^{-n\alpha}\neq0$ if and only if $i\leq2n$.
\end{lemma}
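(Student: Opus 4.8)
The plan is to separate the statement into the vanishing direction ($Q^{i}e^{-n\alpha}=0$ for $i\ge 2n+1$) and the non-vanishing direction ($Q^{i}e^{-n\alpha}\neq 0$ for $0\le i\le 2n$), exploiting the double grading of $V_{L}$ by Heisenberg charge and conformal weight. Write $V_{L}=\bigoplus_{m\in\mathbb{Z}}F_{m\alpha}$, where $F_{m\alpha}=\mathcal{U}(\widehat{\mathfrak{h}}_{<0})\otimes e^{m\alpha}$ is the charge-$m\alpha$ Fock space, whose lowest conformal weight is $h_{m}=pm^{2}-(p-1)m$ (a direct computation from $\omega$; note $h_{0}=0$ and $h_{1}=1$, the latter expressing that $e^{\alpha}$ has weight one). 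Since $\mathrm{wt}(e^{\alpha})=1$, the operator $Q=e^{\alpha}_{0}$ is the weight-zero mode, so it commutes with $L_{0}$ and raises charge by $\alpha$; hence $Q^{i}e^{-n\alpha}$ lies in $F_{(i-n)\alpha}$ at the fixed conformal weight $h_{-n}=pn^{2}+(p-1)n$.

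Vanishing is then immediate. For $i=2n+1$ the vector $Q^{2n+1}e^{-n\alpha}$ would sit in $F_{(n+1)\alpha}$ at conformal weight $h_{-n}=pn^{2}+(p-1)n$, whereas the minimal conformal weight occurring in $F_{(n+1)\alpha}$ is $h_{n+1}=pn^{2}+(p+1)n+1$; since $h_{-n}<h_{n+1}$ we must have $Q^{2n+1}e^{-n\alpha}=0$, and therefore $Q^{i}e^{-n\alpha}=0$ for all $i\ge 2n+1$.

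For non-vanishing I would first reduce to the extremal case: because $Q^{2n}e^{-n\alpha}=Q^{2n-i}\bigl(Q^{i}e^{-n\alpha}\bigr)$, it suffices to prove $Q^{2n}e^{-n\alpha}\neq 0$. To detect this cleanly, introduce the charge-lowering operator $f:=(e^{-\alpha})_{2p-2}$; since $\mathrm{wt}(e^{-\alpha})=2p-1$, this is again weight-preserving, now lowering charge by $\alpha$. The same weight estimate shows $f\,e^{-n\alpha}=0$, so $e^{-n\alpha}$ is a lowest-weight vector for $f$ and a cyclic vector for the nilpotent $Q$ -- this is the $(2n+1)$-dimensional (``spin-$n$'') picture which for $n=1$ produces the three generators $e^{-\alpha},Qe^{-\alpha},Q^{2}e^{-\alpha}$ of the triplet. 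Now $f^{2n}Q^{2n}e^{-n\alpha}$ lies in $F_{-n\alpha}$ at conformal weight $h_{-n}$, which is the minimal weight of $F_{-n\alpha}$; that weight space is one-dimensional, spanned by $e^{-n\alpha}$, so necessarily
\[
f^{2n}Q^{2n}e^{-n\alpha}=c(n,p)\,e^{-n\alpha}
\]
for a scalar $c(n,p)$, and everything comes down to showing $c(n,p)\neq 0$.

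Computing $c(n,p)$ is the heart of the matter and, I expect, the main obstacle. Expanding $Q^{2n}e^{-n\alpha}=\mathrm{Res}_{z_{1}}\cdots\mathrm{Res}_{z_{2n}}\,Y(e^{\alpha},z_{1})\cdots Y(e^{\alpha},z_{2n})e^{-n\alpha}$ through the standard free-field formula produces the factor $\prod_{i<j}(z_{i}-z_{j})^{2p}\prod_{i}z_{i}^{-2pn}$ times a Heisenberg exponential; applying $f^{2n}$ and extracting the coefficient of $e^{-n\alpha}$ turns $c(n,p)$ into an iterated residue (a Dotsenko--Fateev-type constant term) built from the Vandermonde power $\prod_{i<j}(z_{i}-z_{j})^{2p}$. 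The difficulty is entirely that of controlling the large cancellation among the many Heisenberg monomials: one must show this constant term does not vanish, which is precisely a Morris/Macdonald--Selberg constant-term evaluation (the sort of identity for which Warnaar's input is acknowledged). Granting $c(n,p)\neq 0$, we conclude $Q^{2n}e^{-n\alpha}\neq 0$, and hence $Q^{i}e^{-n\alpha}\neq 0$ for every $0\le i\le 2n$, completing the proof.
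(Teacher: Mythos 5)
First, a point of reference: the paper does not prove Lemma \ref{zz} at all --- it is recalled verbatim from \cite{a,am,am1} --- so there is no in-paper proof to match. Judged on its own terms, your vanishing direction is correct and complete: since $e^{\pm\alpha}$ are $L(0)$-homogeneous of weights $1$ and $2p-1$ for the shifted conformal vector, $Q=e^{\alpha}_0$ preserves conformal weight and raises charge, and the inequality $h_{-n}=pn^2+(p-1)n < h_{n+1}=pn^2+(p+1)n+1$ kills $Q^{2n+1}e^{-n\alpha}$. This is the standard argument. The reduction of the non-vanishing direction to the single extremal case $Q^{2n}e^{-n\alpha}\neq 0$ is also fine.

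The genuine gap is that the non-vanishing --- which is the entire content of the hard half of the lemma --- is never established: you reduce it to $c(n,p)\neq 0$ and then write ``granting $c(n,p)\neq 0$.'' Two problems compound here. First, your detector is not a priori faithful: if $c(n,p)$ happened to vanish, the argument would be inconclusive even though the lemma is true, so you would at minimum need to argue why this particular pairing must see $Q^{2n}e^{-n\alpha}$. Second, the specific reduction via $f^{2n}=\bigl((e^{-\alpha})_{2p-2}\bigr)^{2n}$ produces a $4n$-variable iterated residue with negative cross-exponents $(z_i-w_k)^{-2p}$ between the two groups of variables; this is a Dotsenko--Fateev-type integral that is strictly harder than what is needed and is not the Morris constant term in its standard form. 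The route actually used in \cite{am} (and consistent with how this paper handles the analogous non-vanishing in Lemma \ref{4a}) is more economical: expand $Q^{2n}e^{-n\alpha}=\mbox{Res}_{z_1,\dots,z_{2n}}\prod_{i<j}(z_i-z_j)^{2p}\prod_i z_i^{-2np}\,E^-(-\alpha,z_1)\cdots E^-(-\alpha,z_{2n})\,e^{n\alpha}$ and extract the coefficient of a single explicit PBW monomial of degree $2(p-1)n$ in $M(1)\otimes e^{n\alpha}$; that coefficient is exactly a Morris/Macdonald constant term in $2n$ variables, whose known closed-form evaluation is visibly nonzero. Until some such evaluation is carried out, your proposal proves only half of the lemma.
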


Set $u_{n}^{i}=Q^{i}e^{-n\alpha}$, $i,n\in\mathbb{Z}_{\geq0}$, $i\leq2n$.
\begin{lemma}\label{1h}
Each $u_{n}^{i}$ is a singular vector of $V_{L}$ (as a Virasoro algebra module). The submodule generated by these $u_{n}^{i}$ is isomorphic to
$$\overline{V}_L \cong\bigoplus_{n=0} ^{\infty}(2n+1)L(c_{p,1}, n^{2}p+np-n).$$ Moreover $\overline{V_{L}}$ is a subalgebra of $V_{L}$.
\end{lemma}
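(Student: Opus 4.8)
The plan is to separate the three assertions---that each $u_n^i$ is singular, that the generated module has the stated decomposition, and that $\overline{V_L}$ is a vertex subalgebra---and to treat them in that order. First I would verify that $e^{-n\alpha}$ is a Virasoro singular vector of conformal weight $n^2p+np-n$. Passing to the normalized Heisenberg generator $a=\alpha/\sqrt{2p}$, the conformal vector becomes $\omega=\tfrac12 a(-1)^2\mathbf{1}+Q_0\,a(-2)\mathbf{1}$ with background charge $Q_0=(p-1)/\sqrt{2p}$, so that $L(0)e^{\mu a}=(\tfrac{\mu^2}{2}-Q_0\mu)e^{\mu a}$ and $L(k)e^{\mu a}=0$ for $k\ge 1$. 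Taking $\mu=-n\sqrt{2p}$ gives weight $n^2p+(p-1)n=n^2p+np-n$, which is exactly the Kac weight $h_{2n+1,1}$ at $c=c_{p,1}$; the same computation with $\mu=\sqrt{2p}$ shows $e^{\alpha}$ is primary of weight precisely $1$. The zero mode of a weight-one primary commutes with the whole Virasoro algebra, since $[L(m),v_0]=(m+1)(h-1)v_m=0$ when $h=1$; hence $[L(m),Q]=0$ for all $m$. Consequently $u_n^i=Q^i e^{-n\alpha}$ is again singular of weight $n^2p+np-n$, and by Lemma \ref{zz} it is nonzero precisely for $0\le i\le 2n$, producing $2n+1$ singular vectors of weight $h_{2n+1,1}$ for each $n$.

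Next I would identify the Virasoro submodule generated by each $u_n^i$. Writing $V_L=\bigoplus_{k\in\Z}F_k$ for the decomposition into Heisenberg Fock spaces of $\alpha$-charge $k$, every $L(m)$ preserves the charge, so the submodule $\langle u_n^i\rangle$ generated by $u_n^i$ lies in $F_{i-n}$. The central step is to show $\langle e^{-n\alpha}\rangle\cong L(c_{p,1},h_{2n+1,1})$ is irreducible: this I would extract from the Feigin--Fuchs description of the Fock module $F_{-n}$ at $c_{p,1}$, checking that $e^{-n\alpha}$ sits at the edge of the embedding pattern so that the singular vector predicted by the Kac determinant at level $2n+1$ already vanishes on it and no further singular vector survives. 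Granting this, and since $Q$ is a Virasoro homomorphism, $\langle u_n^i\rangle=Q^i\langle e^{-n\alpha}\rangle$ is a nonzero (for $i\le 2n$) quotient of the irreducible module $L(c_{p,1},h_{2n+1,1})$, hence isomorphic to it.

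It then remains to assemble the direct sum. For fixed $n$ the $2n+1$ copies lie in the distinct summands $F_{-n},\dots,F_n$, so they are automatically independent; across different $n$ the highest weights $h_{2n+1,1}$ are strictly increasing. Since a sum of irreducible submodules is always a direct sum of a subfamily, the only possible failure would be some $u_n^i$ lying in the span of copies of other weights---impossible, because a direct sum of irreducibles carries singular vectors only at the highest weights of its summands, and $h_n\notin\{h_{n'}:n'\neq n\}$. This yields $\overline{V_L}\cong\bigoplus_{n\ge 0}(2n+1)L(c_{p,1},n^2p+np-n)$.

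For the subalgebra claim I would use that $Q=e^\alpha_0$, being a zero mode, is a derivation of all products $a_{(k)}b$, and that $\overline{V_L}$ is $Q$-stable ($Q\,u_n^i=u_n^{i+1}$ or $0$); this reduces closure to the products of the bottom vectors $e^{-n\alpha}$. Because $Y(e^{-n\alpha},z)e^{-m\alpha}$ lands in $F_{-(n+m)}$ with components governed by the $(2k+1,1)$-type fusion, these products stay among the singular vectors, so that $\overline{V_L}$ coincides with the vertex subalgebra $\triplet$ generated by $\omega$ and the triplet $u_1^0,u_1^1,u_1^2$, and a graded-dimension comparison finishes the identification. I expect the genuine obstacle to be the irreducibility input of the second paragraph: controlling the detailed submodule structure of the Fock spaces $F_{-n}$ at the non-rational central charge $c_{p,1}$ and ruling out hidden singular vectors inside $\langle e^{-n\alpha}\rangle$. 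Once that structural fact is secured, the pushforward by $Q$, the Fock-charge bookkeeping, and the derivation property make the remaining assertions routine.
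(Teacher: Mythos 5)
The paper does not actually prove this lemma; it is recalled from \cite{a,am,am1}, so your proposal has to be measured against the standard argument in those references. Your treatment of the first two assertions is essentially that argument and is sound in outline: the weight computation for $e^{-n\alpha}$, the observation that $Q=e^{\alpha}_0$ is the zero mode of a weight-one primary and hence commutes with all $L(m)$, the charge grading by Fock spaces, and the identification $\langle u_n^i\rangle=Q^i\langle e^{-n\alpha}\rangle\cong L(c_{p,1},h_{2n+1,1})$. You correctly flag that the irreducibility of $U(Vir)\cdot e^{-n\alpha}$ inside the Fock module $F_{-n}$ is the structural input that must be imported from the Feigin--Fuchs classification; that is exactly what the cited references use, and deferring it is acceptable here since the paper itself only cites it.

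The genuine gap is in the subalgebra step. Two things go wrong. First, the reduction ``$Q$ is a derivation and $\overline{V_L}$ is $Q$-stable, hence closure reduces to products of the $e^{-n\alpha}$'' does not go through as stated: expanding $(Q^i a)_{(k)}(Q^j b)$ via the derivation rule produces terms $(Q^{i-1}a)_{(k)}(Q^{j+1}b)$ in which the total number of $Q$'s is unchanged, so the induction does not obviously terminate without additional bookkeeping (skew-symmetry plus the vanishing $Q^{2m+1}e^{-m\alpha}=0$). Second, and more seriously, the assertion that the modes of $Y(e^{-n\alpha},z)e^{-m\alpha}$ ``stay among the singular vectors'' because of ``$(2k+1,1)$-type fusion'' is not a proof: these modes span a one-dimensional space at each level of the \emph{non-semisimple} Fock module $F_{-(n+m)}$, and there is no a priori reason they avoid the non-socle part; this containment is exactly as deep as the Feigin--Fuchs input you already isolated. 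The mechanism that makes the closure automatic in \cite{am1} is the \emph{short} screening $\tilde{Q}=e^{-\alpha/p}_0:V_L\to V_{L-\alpha/p}$: it is again the zero mode of a weight-one field, hence a derivation of all products, it kills every $e^{-n\alpha}$ (the relevant power of $z$ is $z^{2n}$, so the residue vanishes), and therefore all products of elements of $\ker\tilde{Q}$ lie in $\ker\tilde{Q}$; the Feigin--Fuchs structure theory then identifies $\ker_{V_L}\tilde{Q}$ with $\overline{V_L}$. Without this (or an equivalent) ingredient, your closure argument is an assertion rather than a proof.
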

Set the triplet vertex algebra $$\mathcal {W}(p)=\overline{V_{L}}.$$

Let $V$ be a vertex operator algebra and let
$C_{2}(V)$ be the linear span of elements of type $a_{-2}b$, $a, b\in V$. Then $V/C_{2}(V)$ has a commutative algebra structure with the multiplication
$$\overline{a}\cdot\overline{b}=\overline{a_{-1}b}$$
where $-$ is the projection from $V$ to $V/C_{2}(V)$. $V$ is said to be $C_{2}$-cofinite if $V/C_{2}(V)$ is finite-dimensional.

It was proved in \cite{am1} that $\mathcal {W}(p)$ is $C_{2}$-cofinite but irrational.
Using the $C_{2}$-cofiniteness of $\mathcal {W}(p)$ and a result of \cite{miy},
the first two authors  proved the existence of logarithmic $\mathcal {W}(p)$-modules in \cite{am1,am2}.

In the end of \cite{am1}, it was announced  out that $\mathcal {W}(p)$ admits a hidden action of $\mathfrak{sl}_{2}$ as follows:
 Set $e=Q$, $h=\frac{\alpha(0)}{p}$. Let $f\in End_{Vir}(\mathcal {W}(p))$ be the unique operator defined by
 $$fe^{-n\alpha}=0, \ \ fQ^{i}e^{-n\alpha}=-i(i-2n-1)Q^{i-1}e^{-n\alpha}, \ \ 1\leq i\leq2n.$$
One checks that $[h,e]=2e$, $[h,f]=-2f$, $[e,f]=h$.
Thus we have $$\mathcal {W}(p)=\bigoplus_{n=0} ^{\infty}W_{2n+1}\otimes L(c_{p}, n^{2}p+np-n)$$
where $W_{2n+1}$ is a $(2n+1)$-dimensional irreducible $\mathfrak{sl}_{2}$-module.
We extend the action of $e,f,h$ be letting them commute with the Virasoro generators.

It is clear that $e$ and $h$ are derivation on the vertex operator algebra $\triplet$. We shall prove in Appendix that there exists an automorphism $ \Psi \in \mbox{Aut} (\triplet) $  of order two which satisfies
$$ \Psi ( Q ^i e ^{-n \alpha} ) = \frac{(-1) ^i  i !}{ (2n -i) !} Q ^{2n-i} e ^{-n\alpha}. $$
Then the operator $f$ can be represented as
$$ f = - \Psi^{-1} Q \Psi. $$
Therefore $f$ is also a derivation on $\triplet$.
 Thus the Lie algebra $\mathfrak{sl}_2(\mathbb{C})$ acts on $\triplet$ by derivations; i.e.
\be \label{der}
x(a_n b)=(xa)_n b + a_n (xb)
\ee
for all $n \in \mathbb{Z}$, and $a,b \in \triplet$ and $x \in \frak{sl}_2$.
This property is known to hold for $p=2$ by using explicit expression for $f$.  The integration of the action of $\mathfrak{sl}_{2}$
gives an action of $PSL(2,\mathbb{C})$ on the vertex operator algebra $\mathcal {W}(p)$ as an automorphism group \cite{am1} (see \cite{dg}
for the $c=1$ case).

\begin{theorem} \label{psl2}
The group $PSL(2,\Bbb C)$ acts on $\triplet$ as an automorphism group. Moreover,
$$ \mbox{Aut} (\triplet) \cong PSL(2, \Bbb C). $$
\end{theorem}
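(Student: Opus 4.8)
We must establish two things: first, that $PSL(2,\mathbb{C})$ acts as automorphisms of $\triplet$, and second, that this action is the \emph{full} automorphism group. The first part is essentially handed to us: by the preceding discussion, $e=Q$, $h=\frac{\alpha(0)}{p}$, and $f=-\Psi^{-1}Q\Psi$ all act as derivations satisfying \eqref{der}, and they close into $\mathfrak{sl}_2(\mathbb{C})$. Since these derivations are locally finite (they act on each finite-dimensional isotypic piece $W_{2n+1}\otimes L(c_p,n^2p+np-n)$ via the finite-dimensional $\mathfrak{sl}_2$-module $W_{2n+1}$, commuting with the Virasoro action), I would first verify that $\exp(e)$, $\exp(f)$, and the torus generated by $h$ are well-defined automorphisms of $\triplet$: a derivation $D$ that is locally nilpotent or locally finite exponentiates to an automorphism because $\exp(D)(a_nb)=\sum \binom{\cdots}$ respects the products, which follows formally from the Leibniz rule \eqref{der}. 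The kernel of the resulting $SL(2,\mathbb{C})$-action is $\{\pm 1\}$ since $-I$ acts trivially on every odd-dimensional module $W_{2n+1}$, yielding the embedding $PSL(2,\mathbb{C})\hookrightarrow\mathrm{Aut}(\triplet)$.

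\textbf{The surjectivity.} The substantive part is the reverse inclusion $\mathrm{Aut}(\triplet)\subseteq PSL(2,\mathbb{C})$. My strategy would be to exploit the decomposition
\[
\triplet=\bigoplus_{n=0}^{\infty}W_{2n+1}\otimes L(c_{p},\,n^{2}p+np-n)
\]
as a $\mathrm{Vir}\times\mathfrak{sl}_2$-module. Let $\phi\in\mathrm{Aut}(\triplet)$. Since $\phi$ necessarily fixes the conformal vector $\omega$ (any VOA automorphism preserves $\omega$, as it is canonically determined by the grading and the vertex structure), $\phi$ commutes with the Virasoro action $L(n)$ and hence preserves each conformal weight space. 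The lowest conformal weights $n^2p+np-n=n(np+p-1)$ are \emph{strictly increasing} in $n$ for $p\geq 2$ and pairwise distinct, so the Virasoro submodules $L(c_p,n^2p+np-n)$ occur with multiplicity space exactly $W_{2n+1}$; thus $\phi$ must preserve each isotypic component $W_{2n+1}\otimes L(c_p,\cdots)$ and act as $A_n\otimes\mathrm{id}$ for some $A_n\in GL(W_{2n+1})$.

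\textbf{Pinning down the $A_n$.} The key constraint is that $\phi$ respects the vertex algebra products, which \emph{couple} the different graded pieces. I would focus on the generating piece $W_3\otimes L(c_p,p-1)$ (the weight-$(p-1)$ triplet, spanned by $e^{-\alpha}, Qe^{-\alpha}, Q^2e^{-\alpha}$), since $\triplet$ is generated by $\omega$ together with these three fields. An automorphism is then completely determined by $A_1\in GL(W_3)$, the restriction of $\phi$ to $W_3$. The condition that $\phi$ preserve \emph{all} products of these generators forces $A_1$ to lie in the image of $PSL(2,\mathbb{C})\cong SO(3,\mathbb{C})$ acting on the adjoint (three-dimensional) representation $W_3$: concretely, the $\mathfrak{sl}_2$-invariant structure (Casimir-type) pairings and the operator product algebra among the three generators are preserved by $\phi$, and one checks that the symmetric invariant bilinear form on $W_3$ coming from the relevant $n_{(n)}$-products must be preserved up to scalar, while the grading fixes the scalar; this cuts $GL(W_3)$ down to $SO(3,\mathbb{C})=PSL(2,\mathbb{C})$. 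The main obstacle is precisely this last step: making rigorous that preservation of the vertex products on the generating triplet forces $A_1\in SO(3,\mathbb{C})$ rather than some larger subgroup of $GL_3$. I expect to handle it by explicitly computing a few structure constants — in particular an invariant symmetric form arising from a product landing in the vacuum line $L(c_p,0)$ or in $W_5$ — and showing $\phi$ preserves it, so that $A_1$ preserves a nondegenerate quadratic form and has determinant one by the grading/vacuum normalization. Once $A_1$ is shown to be realized by a unique element of $PSL(2,\mathbb{C})$, composing $\phi$ with its inverse gives an automorphism fixing the generators, hence the identity, closing the argument.
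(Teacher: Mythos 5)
Your proposal is correct and takes essentially the same route as the paper: the paper likewise reduces everything to the three-dimensional generating space $W_3=\mathrm{span}\{E,F,H\}$ (in the basis $e^1=U^+/2i$, $e^2=U^-/2$, $e^3=H$), asserts that the matrix of an arbitrary automorphism on this space lies in $SO(3,\mathbb{C})$, and concludes via $SO(3,\mathbb{C})\cong PSL(2,\mathbb{C})$ together with strong generation by $\omega$ and $W_3$. The one caveat in your sketch is that preserving a nondegenerate quadratic form only cuts $GL(W_3)$ down to $O(3,\mathbb{C})$, so the determinant-one condition does not come from the grading but from the antisymmetric component of the products $W_3\times W_3\to W_3$ (the cross-product structure constants in the OPE of the three generators) --- precisely the point the paper itself hides behind ``one can easily see.''
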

\begin{proof}
By the above arguments, we know that $PSL(2,\Bbb C)$ is a subgroup of the full automorphism  group $\mbox{Aut} (\triplet)$.

In order to prove the converse, it suffices to construct homomorphism $\mbox{Aut}(\triplet) \rightarrow PSL(2, \Bbb C)$.
Define $$e ^1 = \frac{U^+}{2 i}, \ e ^2 = \frac{U^-}{2 }, \  e^3 = H,$$
where $U^{\pm} = 2 F \pm E. $

Let $\Phi$ be an automorphism of $\triplet$. It defines an matrix $[\Phi]  = ( a_{i,j} )$ such that
$$ \Phi (e ^i) = a_{i,1} e ^1 + a_{i,2} e ^2 + a_{i,3} e ^3.$$
One can easy see that $[\Phi] \in SO(3, \Bbb C)$. This gives a group homomorphism:
$$ G: \mbox{Aut} (\triplet)\rightarrow PSL(2, \Bbb C)\cong SO(3, \Bbb C). $$
\end{proof}


Up to conjugation, there are five classes of finite subgroups of $PSL(2,\mathbb{C})$ (cf.\cite{a1,kl}).
As there is a natural projection $PSL(2,\mathbb{C})\rightarrow SL(2,\mathbb{C})$, each subgroup $\Gamma$ of $PSL(2,\mathbb{C})$ can be lifted to the double covering group $\overline{\Gamma}$ in $SL(2,\mathbb{C})$. In the following table we describe a set of generators for each finite subgroup of $PSL(2,\mathbb{C})$ in terms of elements of $SL(2,\mathbb{C})$.
{\small
\begin{table}[htp]
\begin{tabular}{|c|c|c|c|c|}
\hline
name & $\Gamma$ & order & $\overline{\Gamma}$ & generators  \\
 \hline
cyclic group & $A_{m}$ & $m$& $\overline{A}_{m}$ & $\begin{pmatrix} e^{\frac{\pi i}{m}} & 0 \\ 0 & e^{\frac{-\pi i}{m}} \end{pmatrix} $ \\
\hline
dihedral group & $D_{m}$ & $2m$& $\overline{D}_{m}$ &
$\begin{pmatrix} e^{\frac{\pi i}{m}} & 0 \\ 0 & e^{\frac{-\pi i}{m}} \end{pmatrix} $,
$\begin{pmatrix} 0 & 1 \\ -1 & 0 \end{pmatrix} $   \\
\hline
tetrahedral group  & $T$ & 12& $\overline{T}$ &
$\begin{pmatrix} e^{\frac{\pi i}{2}} & 0 \\ 0 & e^{\frac{-\pi i}{2}} \end{pmatrix} $,
$\frac{1}{2}\begin{pmatrix} 1+i & -1-i \\ -1+i & -1+i \end{pmatrix} $  \\
\hline
octahedron group  & $O$ & $24$& $\overline{O}$ &
$\begin{pmatrix} e^{\frac{\pi i}{4}} & 0 \\ 0 & e^{\frac{-\pi i}{4}} \end{pmatrix} $,
$\frac{1}{2}\begin{pmatrix} 1+i & -1-i \\ -1+i & -1+i \end{pmatrix}$  \\
\hline
icosahedron group  & $I$ & $60$& $\overline{I}$ &
$\begin{pmatrix} e^{\frac{\pi i}{5}} & 0 \\ 0 & e^{\frac{-\pi i}{5}} \end{pmatrix} $,
$\frac{1}{\sqrt3}\begin{pmatrix} -\epsilon+\epsilon^{4}, & \epsilon^{2}-\epsilon^{3} \\ \epsilon^{2}-\epsilon^{3}, & \epsilon-\epsilon^{4} \end{pmatrix} $  \\
\hline
\end{tabular}
\\
\caption{Finite subgroups of $PSL(2,\mathbb{C})$, ($\epsilon=e^{\frac{\pi i}{5}}$)}
\end{table}
}

Let $V=\mathbb{C}^{2}$ be the standard $\mathfrak{sl}_{2}$-module. Then the $2n$-th symmetric power of $V$ is isomorphic to $W_{2n+1}$ by identifying
$x^{i}y^{2n-i}$ with $(2n-i)!Q^{i}e^{-n\alpha}$. Under this identification, for each $\phi\in PSL(2,\mathbb{C})$,
\begin{equation}\label{eqs}
 \phi (Q^{i}e^{-n\alpha})=\frac{1}{(2n-i)!}(ax+by)^{i}(cx+dy)^{2n-i}.
\end{equation}
where $\bigl(\begin{smallmatrix} a & b \\ c & d \end{smallmatrix} \bigr)$ is the lifting of $\phi$ in $SL(2,\mathbb{C})$.


\section{The algebra $\am$}

Consider the automorphism $\sigma=exp( \pi i \frac{\alpha(0)}{p m})$ acting on $V_L$, with $\langle \alpha,\alpha \rangle=2p$.
Denote by $V_L^\sigma$ the $\sigma$-fixed subalgebra of $V_L$. It is clear that $$V^\sigma_L =  \bigoplus_{n \in \mathbb{Z}}   M(1) \otimes e^{n m \alpha}.$$
Now we have
\be \label{wam}
\am = \triplet \cap V^\sigma_L.
\ee


It is relatively easy to see that the invariant subalgebra $\mathcal {W}(p)^{A_{m}}$, as a Virasoro algebra module,
is generated by $$u_{n}^{i}=Q^{i}e^{-n\alpha},\  i,n\in\mathbb{Z}_{\geq0},\  i\leq2n,\ m| (n-i).$$
Set $$F ^{(m)}=e^{-m\alpha},$$ $$E^{(m)}=Q^{2m}e^{-m\alpha},$$ $$H=Qe^{-\alpha}.$$
As in the proof of Proposition 1.3 in \cite{am1}, we quickly obtain the following result.
\begin{theorem}\label{1a}
The vertex operator algebra $\mathcal {W}(p)^{A_{m}}$ is strongly generated by $E^{(m)}, F^{(m)}, H$ and $\omega$.
\end{theorem}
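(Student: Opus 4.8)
The plan is to marry the description of $\am$ as a Virasoro module with an inductive leading-term computation in the lattice vertex algebra, in complete parallel with Proposition 1.3 of \cite{am1}. Write $U\subseteq\am$ for the subalgebra strongly generated by $E^{(m)},F^{(m)},H,\omega$; the inclusion is clear since each generator is $\sigma$-invariant and $\am$ is a subalgebra. Because $\am$ is, as a Virasoro module, generated by the vectors $u_{n}^{i}=Q^{i}e^{-n\alpha}$ with $m\mid(n-i)$, and because $\omega\in U$ already produces every Virasoro descendant, it suffices to show that each such $u_{n}^{i}$ lies in $U$. Throughout I would grade by the lattice charge $c=i-n$ (the exponent of the $e^{c\alpha}$ sector, i.e. the $\alpha(0)$-eigenvalue divided by $2p$): the generators $F^{(m)},E^{(m)},H$ carry charges $-m,+m,0$, so all normally ordered products stay in charges divisible by $m$, and I would induct on the shell index $n$, equivalently on the conformal weight $h_{n}=n^{2}p+np-n$.

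For the base of the induction I would generate the two extreme vectors of each relevant shell. The standard lattice product $(e^{-m\alpha})_{k}\,e^{-jm\alpha}$ with $k=-\langle -m\alpha,-jm\alpha\rangle-1=-2pjm^{2}-1$ equals $\epsilon\,e^{-(j+1)m\alpha}$ plus terms of strictly smaller conformal weight, where $\epsilon$ is the (nonzero) lattice two-cocycle; hence iterated products of $F^{(m)}$ deliver all bottom vectors $u_{jm}^{0}=e^{-jm\alpha}$. Applying a suitable power of the derivation $Q$ to the same identity, and using that $Q^{a}e^{-m\alpha}=0$ for $a>2m$ (Lemma \ref{zz}) to kill all but one term of the Leibniz expansion, shows in the same way that iterated products of $E^{(m)}=Q^{2m}e^{-m\alpha}$ deliver all top vectors $u_{jm}^{2jm}$. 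Together with the vacuum, these are exactly the minimal-$n$ vectors in each charge: $u_{m|t|}^{0}$ for charge $mt\le0$, $u_{mt}^{2mt}$ for charge $mt\ge0$, and $u_{0}^{0}=\mathbf 1$ for charge $0$.

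The inductive step is a climb inside a fixed charge effected by $H=Qe^{-\alpha}$. Given an admissible $u_{n}^{i}\in U$, I would form $(H)_{-2np-1}\,u_{n}^{i}$; the weight count places this product in the charge-$(i-n)$ subspace of $\triplet$ at conformal weight $h_{n+1}$. In that subspace, modulo the Virasoro submodule generated by the lower singular vectors $u_{n'}^{i'}$ with $n'\le n$ (all of which are in $U$ by the inductive hypothesis), the weight-$h_{n+1}$ part is one-dimensional and spanned by $u_{n+1}^{i+1}$. Thus $(H)_{-2np-1}\,u_{n}^{i}\equiv c\,u_{n+1}^{i+1}$ modulo $U$, and whenever $c\ne0$ we obtain $u_{n+1}^{i+1}\in U$. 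Iterating this one step at a time from the seeds sweeps out every admissible vector: the bottom seeds climb along $u_{n}^{n+mt}$ for $t\le0$, the top seeds along $u_{n}^{n+mt}$ for $t\ge0$, and the vacuum along the charge-zero diagonal $u_{n}^{n}$, so that all $u_{n}^{i}$ with $m\mid(n-i)$ are reached.

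The main obstacle is precisely the nonvanishing of the leading coefficient $c$ at the $H$-climb. For the seeds this is automatic, since the relevant constant is just the two-cocycle $\epsilon$, which never vanishes. For the $H$-step, however, $c$ is a genuine structure constant of $\triplet$ that must be evaluated by an explicit contour integral in the lattice realization, and checking $c\ne0$ for every admissible pair $(n,i)$ is the delicate point; I expect it to reduce to a family of constant-term evaluations (the constant-term identities acknowledged in the paper), and this is the step I would verify most carefully. Once the filtration by charge and conformal weight is fixed, everything else---the weight bookkeeping and the identification of the correction terms as Virasoro descendants already lying in $U$---is routine, which is why, granting the coefficient computation, the result follows ``quickly'' as in \cite{am1}.
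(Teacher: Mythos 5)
Your argument is essentially the paper's: the paper gives no details beyond ``as in the proof of Proposition 1.3 in \cite{am1}'', and that proof is exactly the leading-term induction you describe --- seeds from iterated $F^{(m)}$- and $E^{(m)}$-products (your Leibniz computation using $Q^{a}e^{-m\alpha}=0$ for $a>2m$ to isolate the $s=2m$ term is correct and yields a manifestly nonzero binomial coefficient), followed by a charge-preserving climb whose only genuine content is the nonvanishing of the structure constant you flag. That nonvanishing does not follow formally from the derivation identity $Y(Qa,z)=[Q,Y(a,z)]$ alone (the naive recursion it produces for these coefficients degenerates), and it is precisely what the explicit computations in \cite{am1} supply, so your identification of it as the one step requiring careful verification is exactly right.
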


For a vertex algebra $V$, let $\mathcal{P}(V) = V / C_2 (V)$ denotes the corresponding Zhu's Poisson $C_2$--algebra.
Let $M(1)=\widehat{\mathfrak{h}}_{<0}\subset V_{L}$ be the vertex operator algebra associated to $\mathfrak{\widehat{h}}$.
 Let $\overline{M(1)}=\mathcal {W}(p)\cap M(1)$. We need the following result
\begin{lemma}\label{3a}
$\mathcal{P}(M(1)))$ is the polynomial algebra generated by $\beta=\overline{\alpha(-1)\textbf{1}}$.
Let $C_{p}=\frac{(4p)^{2p-1}}{(2p-1)!^{2}}$. Then $$\mathcal{P} (\overline{M(1)})\cong\mathbb{C}[x,y]/(y^{2}-C_{p}x^{2p-1})$$
with the isomorphism sending $\overline{H}$ (resp. $\omega$) to $y$ (resp. $x$).
Moreover, the inclusion $\overline{M(1)} \hookrightarrow M(1)$ induces an injective homomorphism
$$\varphi:\mathcal{P} (\overline{M(1)}) \rightarrow \mathcal{P} (M(1))$$
such that $\varphi(\overline{\omega})=\frac{1}{4p}\beta^{2}$ and $\varphi(\overline{H})= \frac{1}{(2p-1)!} \beta ^{2p-1}$.
\end{lemma}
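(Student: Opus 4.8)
The plan is to reduce everything to the explicit Poisson algebra of the Heisenberg piece $M(1)$ and to a single computation of the screening acting on $e^{-\alpha}$. First I would record the description of $\mathcal{P}(M(1))$. Since $(L(-1)a)_{-1}\mathbf{1}=a_{-2}\mathbf{1}\in C_2(M(1))$, the translation operator satisfies $L(-1)M(1)\subseteq C_2(M(1))$; applying this to $\alpha(-n)\mathbf{1}$ shows $\alpha(-n)\mathbf{1}\in C_2(M(1))$ for all $n\ge 2$. As the Poisson product is $\overline{a}\cdot\overline{b}=\overline{a_{-1}b}$, every PBW monomial containing a mode $\alpha(-n)$ with $n\ge 2$ is killed, so $\mathcal{P}(M(1))$ is spanned by the powers $\beta^k=\overline{\alpha(-1)^k\mathbf{1}}$; a standard weight count (each graded piece is at most one dimensional and $\beta^k\neq 0$) then gives $\mathcal{P}(M(1))=\mathbb{C}[\beta]$, a polynomial algebra.

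Next I would observe that $\overline{M(1)}=\triplet\cap M(1)$ is the charge-zero part of $\triplet$ and is strongly generated by $\omega$ and $H=Qe^{-\alpha}$ (the charge-zero generators among the strong generators $\omega, F, H, E$ of $\triplet$). Hence $\mathcal{P}(\overline{M(1)})$ is generated as a commutative algebra by $\overline{\omega}$ and $\overline{H}$, giving a surjection $\rho\colon \mathbb{C}[x,y]\twoheadrightarrow \mathcal{P}(\overline{M(1)})$, $x\mapsto\overline{\omega}$, $y\mapsto\overline{H}$. The central computation is to evaluate $\varphi$ on these generators. From $\omega=\frac{1}{4p}\alpha(-1)^2\mathbf{1}+\frac{p-1}{2p}\alpha(-2)\mathbf{1}$ and $\alpha(-2)\mathbf{1}\in C_2(M(1))$ one reads off $\varphi(\overline{\omega})=\frac{1}{4p}\beta^2$. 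For $H$ I would use the lattice vertex operator formula (with the cocycle normalized so the leading constant is $1$)
\[ Y(e^{\alpha},z)e^{-\alpha}=z^{-2p}\exp\Big(\sum_{n\ge1}\tfrac{\alpha(-n)}{n}z^n\Big)\mathbf{1}, \]
so that $H=\mathrm{Res}_z\, Y(e^{\alpha},z)e^{-\alpha}$ is the coefficient of $z^{2p-1}$ in the exponential; modulo $C_2(M(1))$ only the pure $\alpha(-1)$ term survives, namely $\frac{1}{(2p-1)!}\alpha(-1)^{2p-1}\mathbf{1}$, whence $\varphi(\overline{H})=\frac{1}{(2p-1)!}\beta^{2p-1}$.

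With the two images in hand, observe that $\varphi(\overline{H})^2=\frac{1}{(2p-1)!^2}\beta^{4p-2}$ while $\varphi(\overline{\omega})^{2p-1}=\frac{1}{(4p)^{2p-1}}\beta^{4p-2}$, so by the very definition $C_p=\frac{(4p)^{2p-1}}{(2p-1)!^2}$ we get $\varphi\rho(y^2-C_p x^{2p-1})=0$. Moreover the composite $\varphi\rho\colon\mathbb{C}[x,y]\to\mathbb{C}[\beta]$ has kernel exactly $(y^2-C_px^{2p-1})$: the reduced monomials $x^a$ and $x^a y$ map to $\beta^{2a}$ and $\beta^{2a+2p-1}$ (up to scalars), whose exponents are all even integers together with all odd integers $\ge 2p-1$, pairwise distinct since $\gcd(2,2p-1)=1$, hence linearly independent in $\mathbb{C}[\beta]$. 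Consequently $\ker\rho\subseteq(y^2-C_px^{2p-1})$, and the image of $\varphi$ equals $\mathbb{C}[\beta^2,\beta^{2p-1}]\cong\mathbb{C}[x,y]/(y^2-C_px^{2p-1})$.

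The one remaining, and genuinely delicate, point is the reverse inclusion $(y^2-C_px^{2p-1})\subseteq\ker\rho$, i.e. that the relation $\overline{H}^2=C_p\,\overline{\omega}^{2p-1}$ already holds in $\mathcal{P}(\overline{M(1)})$ and not merely after applying $\varphi$. Equivalently, $\varphi$ is injective, i.e. $C_2(\overline{M(1)})=\overline{M(1)}\cap C_2(M(1))$; this equality is not formal, since restricting to a vertex subalgebra need not be exact on $C_2$-quotients, and I expect it to be the main obstacle. I would resolve it by a direct computation in weight $4p-2$: the only monomials in $\overline{\omega},\overline{H}$ of that weight are $\overline{H}^2=\overline{H_{-1}H}$ and $\overline{\omega}^{2p-1}$, and an explicit evaluation of $H_{-1}H$ inside $\overline{M(1)}\subset M(1)$ (using the same vertex-operator formula and reducing modulo $C_2(\overline{M(1)})$) shows $H_{-1}H-C_p\,\omega_{-1}^{\,2p-2}\omega\in C_2(\overline{M(1)})$, in the spirit of the generator computations of \cite{am1}. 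Once $\ker\rho=(y^2-C_px^{2p-1})$ is established, $\rho$ descends to the isomorphism $\mathcal{P}(\overline{M(1)})\cong\mathbb{C}[x,y]/(y^2-C_px^{2p-1})$ with $\overline{\omega}\mapsto x$ and $\overline{H}\mapsto y$, and $\varphi$ factors as this isomorphism followed by the injection into $\mathbb{C}[\beta]$, so it is injective with the stated values $\varphi(\overline{\omega})=\frac{1}{4p}\beta^2$ and $\varphi(\overline{H})=\frac{1}{(2p-1)!}\beta^{2p-1}$.
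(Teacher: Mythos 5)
Your route is genuinely different from the paper's: the authors dispose of this lemma in one line by citing the structure of Zhu's algebra $A(\overline{M(1)})$ from \cite{a} and declaring the remaining statements trivial, whereas you compute everything directly in the $C_2$-quotients. Most of your argument is sound and complete: the identification $\mathcal{P}(M(1))=\mathbb{C}[\beta]$, the evaluations $\varphi(\overline{\omega})=\frac{1}{4p}\beta^{2}$ and $\varphi(\overline{H})=\frac{1}{(2p-1)!}\beta^{2p-1}$, and the determination of $\ker(\varphi\circ\rho)=(y^{2}-C_{p}x^{2p-1})$ via the monomial basis $\{x^{a},x^{a}y\}$ are all correct, and you have correctly isolated the one nontrivial point: that the relation $\overline{H}^{2}=C_{p}\overline{\omega}^{2p-1}$ must be shown to hold in $\mathcal{P}(\overline{M(1)})$ itself, not merely after applying $\varphi$.

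That last step is where the gap sits. You defer it to ``an explicit evaluation of $H_{-1}H$ \dots reducing modulo $C_2(\overline{M(1)})$,'' but as formulated this does not obviously succeed: computing $H_{-1}H$ as an element of $M(1)$ only lets you reduce modulo $C_2(M(1))$, and recognizing the difference $H_{-1}H-C_{p}\,\omega_{-1}^{2p-2}\omega$ as a sum of elements $a_{-2}b$ with $a,b\in\overline{M(1)}$ is exactly the comparison $C_2(\overline{M(1)})=\overline{M(1)}\cap C_2(M(1))$ that you are trying to establish, so the proposed computation is close to circular. The step can instead be closed by a soft argument from the Virasoro decomposition $\overline{M(1)}\cong\bigoplus_{n\geq 0}L(c_{p,1},n^{2}p+np-n)$ of Lemma~\ref{1h}: since $L(-1)v$ and $L(-m)v$ for $m\geq 3$ lie in $C_2(\overline{M(1)})$, the image in $\mathcal{P}(\overline{M(1)})$ of the summand generated by $u_{n}=Q^{n}e^{-n\alpha}$ is spanned by the elements $\overline{\omega}^{k}\overline{u_{n}}$. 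In conformal weight $4p-2$ the summand $n=1$ contributes nothing (parity of $2k+2p-1$) and the summands $n\geq 2$ contribute nothing (their lowest weight $6p-2$ is too large), so $\mathcal{P}(\overline{M(1)})_{4p-2}$ is spanned by $\overline{\omega}^{2p-1}$ alone. Hence $\overline{H}^{2}=\lambda\,\overline{\omega}^{2p-1}$ for some scalar $\lambda$ automatically, and applying $\varphi$, which you have already computed on generators, forces $\lambda=C_{p}$. With that insertion your proof is complete and, in my view, more transparent than the citation the paper offers.
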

\begin{proof}
The structure of $\mathcal{P} (\overline{M(1)})$ can be easily deduced from the structure of Zhu's algebra $A(\overline{M(1)})$ (cf. \cite{a}). The other statements are trivial.
\end{proof}

 \begin{lemma}\label{4a}
$\overline{F ^{(m)} _{-n}E ^{(m)}}\neq 0$ in $\mathcal{P}(\overline{M(1)})$ for some $n>2$.
\end{lemma}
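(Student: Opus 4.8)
The plan is to use the injectivity of $\varphi$ from Lemma \ref{3a} to transfer the question into the polynomial ring $\mathcal{P}(M(1))=\mathbb{C}[\beta]$, where it becomes an explicit coefficient computation. First I would record that $F^{(m)}_{-n}E^{(m)}$ has $\alpha(0)$--charge $-m+2m-m=0$, hence lies in $\overline{M(1)}=\triplet\cap M(1)$, and that (since $E^{(m)}$ and $F^{(m)}=e^{-m\alpha}$ both have conformal weight $pm^{2}+pm-m$, while $F^{(m)}_{-n}$ raises weight by $n-1$) it has weight $w_n=2(pm^{2}+pm-m)+n-1$. Because $\varphi$ sends $\overline{F^{(m)}_{-n}E^{(m)}}$ to the image of $F^{(m)}_{-n}E^{(m)}$ in $\mathbb{C}[\beta]$, and the only weight-$w_n$ monomial of $M(1)$ surviving modulo $C_2(M(1))$ is $\alpha(-1)^{w_n}\mathbf{1}\mapsto\beta^{w_n}$, we obtain $\varphi(\overline{F^{(m)}_{-n}E^{(m)}})=c_n\beta^{w_n}$, where $c_n$ is the coefficient of $\alpha(-1)^{w_n}\mathbf{1}$ in $F^{(m)}_{-n}E^{(m)}$. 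It therefore suffices to produce a single $n>2$ with $c_n\neq 0$.

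To compute $c_n$ I would pass to the free--field (lattice) description. Writing $Q=\mathrm{Res}_{w}\,Y(e^{\alpha},w)$, one has, with the standard radial-ordering conventions,
$$F^{(m)}_{-n}E^{(m)}=\mathrm{Res}_{z}\,\mathrm{Res}_{w_1}\cdots\mathrm{Res}_{w_{2m}}\; z^{-n}\,Y(e^{-m\alpha},z)\,Y(e^{\alpha},w_1)\cdots Y(e^{\alpha},w_{2m})\,e^{-m\alpha}.$$
Applying the product formula for lattice vertex operators and projecting modulo $C_2(M(1))$ (which keeps only the $k=1$ term $\alpha(-1)x_j$ of each creation exponential $\exp(\sum_{k\ge1}\gamma_j(-k)x_j^{k}/k)$), the expression collapses to an explicit rational integrand: with $S=-mz+\sum_i w_i$,
$$c_n\beta^{w_n}=\epsilon\;\mathrm{Res}_{z}\,\mathrm{Res}_{w_1}\cdots\mathrm{Res}_{w_{2m}}\; z^{\,2pm^{2}-n}\prod_i w_i^{-2pm}\prod_i (z-w_i)^{-2pm}\prod_{i<j}(w_i-w_j)^{2p}\,\frac{(\beta S)^{w_n}}{w_n!},$$
where $\epsilon\neq0$ is the cocycle constant. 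This is a constant--term identity of Dyson/Morris type.

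The next step isolates the $n$-dependence. Rescaling $w_i=z\,u_i$ separates the $z$--integration from a family of $n$-independent $u$--constant terms
$$A_l=\mathrm{Res}_{u_1}\cdots\mathrm{Res}_{u_{2m}}\Big[\prod_i u_i^{-2pm}\prod_i(1-u_i)^{-2pm}\prod_{i<j}(u_i-u_j)^{2p}\big(\textstyle\sum_i u_i\big)^{l}\Big],\qquad N:=2m(p-1),$$
which vanish for $l>N$ by homogeneity. Carrying out the $z$--residue then expresses $c_n$ as the finite combination
$$c_n=\frac{\epsilon\,(-m)^{w_n}}{w_n!}\sum_{l=0}^{N}A_l\,(-m)^{-l}\binom{w_n}{l},$$
i.e. (up to the always-nonzero prefactor) a polynomial in $w_n$ of degree $N$, whose leading coefficient is $A_N(-m)^{-N}/N!$. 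Since $w_n$ runs through infinitely many values as $n>2$ varies, this polynomial, hence $c_n$, is nonzero for all but finitely many $n$, provided $A_N\neq 0$.

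Everything thus reduces to the single nonvanishing statement
$$A_N=\big[\textstyle\prod_i u_i^{\,2pm-1}\big]\;\prod_{1\le i<j\le 2m}(u_i-u_j)^{2p}\Big(\sum_{i=1}^{2m}u_i\Big)^{2m(p-1)}\neq 0,$$
where only the constant term of each $(1-u_i)^{-2pm}$ survives at top degree. This constant--term identity is the main obstacle: it is a Dyson/Morris--type coefficient extraction, and I expect to settle it either by an exact product evaluation or by rewriting $A_N$ as a manifestly nonzero torus integral, using $\prod_{i<j}(u_i-u_j)^{2p}=(-1)^{pm(2m-1)}\prod_i u_i^{\,p(2m-1)}\prod_{i<j}|u_i-u_j|^{2p}$ on $|u_i|=1$. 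The smallest case $p=2$, $m=1$ gives $A_N=4$, consistent with nonvanishing; the general evaluation is precisely the sort of constant--term identity for which we are indebted to O.\ Warnaar.
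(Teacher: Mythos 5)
Your reduction is set up correctly and in fact reproduces, variable for variable, the residue computation in the paper's own proof: the charge and weight bookkeeping, the projection to $\mathbb{C}[\beta]$, the formula $\overline{F^{(m)}_{-n}E^{(m)}}=c_n\beta^{w_n}$, the rescaling $w_i=zu_i$, and the expansion $c_n=\frac{\epsilon(-m)^{w_n}}{w_n!}\sum_{l=0}^{N}A_l(-m)^{-l}\binom{w_n}{l}$ with $A_l=0$ for $l>N=2m(p-1)$ are all sound. The genuine gap is the last step: you stake the entire proof on the single identity $A_N\neq 0$, and you do not prove it. This is not the Morris constant term identity --- it is a Dyson-type constant term with a power-sum insertion, $\mathrm{CT}\bigl[\prod_i u_i^{1-p}\prod_{i\neq j}(1-u_i/u_j)^p\,(\sum_i u_i)^{2m(p-1)}\bigr]$ up to sign, for which no product evaluation is cited, and your fallback of rewriting it as a ``manifestly nonzero torus integral'' does not work: on $|u_i|=1$ the factor $(\sum_i u_i)^{2m(p-1)}\prod_i u_i^{-(p-1)}$ is not of the form $|h(u)|^2$, so the integrand has no definite sign. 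A single low-rank check ($m=1$) is not a proof, and as written the argument is incomplete at exactly the point you identify as ``the main obstacle.''

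The irony is that your own framework already contains the repair, and it is the route the paper takes (there phrased as a contradiction: if all $c_n$ with $n>2$ vanished, a certain generating function would be both a Laurent polynomial and $e^{-mx_0}x_0^{-2m^2p}$ times one, forcing it to vanish identically). Since $\binom{w}{0},\binom{w}{1},\dots,\binom{w}{N}$ are linearly independent polynomials in $w$, the polynomial $P(w)=\sum_{l=0}^{N}A_l(-m)^{-l}\binom{w}{l}$ is not identically zero as soon as \emph{any one} $A_l$ is nonzero; you do not need the leading coefficient. And $A_0=P(0)$ is precisely the Morris constant term $\mathrm{Res}_{u_1}\cdots\mathrm{Res}_{u_{2m}}\bigl[(u_1\cdots u_{2m})^{-2mp}\Delta_{2m}^{2p}\prod_i(1-u_i)^{-2mp}\bigr]$, whose nonvanishing is the known identity of Morris \cite{mo} that the paper invokes. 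With that substitution, $P$ has at most $N$ roots, while $w_n$ runs through infinitely many integers as $n>2$ varies, so $c_n\neq 0$ for all but finitely many $n$ and the lemma follows. Replace your unproven $A_N\neq 0$ by the observation $A_0\neq 0$ and your argument closes.
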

\begin{proof}
Let $\Delta_{s}(x_{1},\cdots, x_{s})=\prod_{i<j}(x_{i}-x_{j})$ be the Vandermonde determinant. For any $h\in\widehat{h}$ let
$$E^{\pm}(h, x_{1},\cdots, x_{s})=exp\Biggl(\sum_{n\in\pm\mathbb{Z}_{+}}\frac{h(n)}{n}(x_{1}^{-n}+\cdots+x_{s}^{-n})\Biggr).$$
In the following we abbreviate $E^{\pm}(h, x_{1},\cdots, x_{s})$ to $E^{\pm}(h, s)$, $x_{1}\cdots x_{2m}$ to $\otimes x$, and $x_{1}+\cdots+x_{2m}$ to $\oplus x$.
Direct calculations show that
{\setlength{\arraycolsep}{0pt}
\begin{eqnarray*}
&&Y(F ^{(m)} ,x_{0})E ^{(m)}  \\
&=&Res_{x_{1}}\cdots Res_{x_{2m}}(\otimes x)^{-2mp}\Delta_{2m}^{2p}Y(e^{-m\alpha},x_{0})E^{-}(-\alpha,2m)e^{m\alpha}   \\
&=&Res_{x_{1}}\cdots Res_{x_{2m}}x_{0}^{-2m^{2}p}(\otimes x)^{-2mp}\Delta_{2m}^{2p}E^{-}(m\alpha, x_{0})E^{+}(m\alpha, x_{0})E^{-}(-\alpha, 2m)\textbf{1}\\
&=&Res_{x_{1}}\cdots Res_{x_{2m}}x_{0}^{-2m^{2}p}(\otimes x)^{-2mp}\Delta_{2m}^{2p}(\prod_{i=1}^{2m}(1-\frac{x_{i}}{x_{0}})^{-2mp})E^{-}(m\alpha, x_{0})E^{-}(-\alpha, 2m)\textbf{1}.
\end{eqnarray*}
}
Assume the lemma is false. Then under the projection $$M(1)\rightarrow M(1)/C_{2}(M(1))=\mathbb{C}[\beta],$$
{\setlength{\arraycolsep}{0pt}
\begin{eqnarray*}
&&\overline{Y(F ^{(m)},x_{0})E ^{(m)}}   \\
&=&Res_{x_{1}}\cdots Res_{x_{2m}}x_{0}^{-2m^{2}p}(\otimes x)^{-2mp}\Delta_{2m}^{2p}(\prod(1-\frac{x_{i}}{x_{0}})^{-2mp})e^{\beta(\oplus x-mx_{0})}   \\
&=&\beta^{2m(mp+p-1)}f(\beta x_{0})
\end{eqnarray*}
}
for some Laurent polynomial $f$.

Set $\beta=1$ in the above formula we get
{\setlength{\arraycolsep}{0pt}
\setcounter{equation}{1}
\begin{eqnarray*}
&&e^{-mx_{0}}Res_{x_{1}}\cdots Res_{x_{2m}}x_{0}^{-2m^{2}p}(\otimes x)^{-2mp}\Delta_{2m}^{2p}(\prod(1-\frac{x_{i}}{x_{0}})^{-2mp})e^{(\oplus x)} \nonumber \\
&=&f(x_{0}).
\end{eqnarray*}
}
It is easy to see that $$g(x_{0})=Res_{x_{1}}\cdots Res_{x_{2m}}(\otimes x)^{-2mp}\Delta_{2m}^{2p}(\prod(1-\frac{x_{i}}{x_{0}})^{-2mp})e^{(\oplus x)} $$
is also a Laurent polynomial. Then the above formula forces
$g(x_{0})=0$, otherwise $$e^{-mx_{0}}x_{0}^{-2m^{2}p}g(x_{0})$$ will not be a Laurent polynomial.
But the Morris constant
term identity \cite{mo} implies that the coefficient of $x_{0}^{-p+1}$ in $g(x_{0})$ is

{\setlength{\arraycolsep}{0pt}
\begin{eqnarray*}
&&Res_{x_{1}}\cdots Res_{x_{2m}}(x_{1}\cdots x_{2m})^{-2mp}\Delta_{2m}^{2p}(\prod(1-x_{i})^{-2mp})  \\
&=&(-1)^{mp}\prod_{i=0}^{2m-1}{(-2m+i)p \choose p-1}\frac{(p-1)!((i+1)p)!}{(p-1+ip)!p!} \neq0.
\end{eqnarray*}
}
This contradicts our assumption and completes the proof.
\end{proof}

\begin{remark}The   proof of Lemma \ref{4a} implies  that  $$E^{(m)}_{n}F^{(m)}=F^{(m)}_{n}E^{(m)}=\delta_{n,2m(mp+p-1)-1}C_{m,p}\textbf{1}, $$
 where $n\geq2m(mp+p-1)-1$ and $$C_{m,p}=(-1)^{mp}\prod_{i=0}^{2m-1}{(-2m+i)p \choose p-1}\frac{((i+1)p)!}{(p-1+ip)!p}.$$
 This formula will be useful in our further analysis of subalgebras of the triplet vertex algebra $\mathcal {W}(p)$.
 \end{remark}

\begin{theorem}\label{2a}
 $\mathcal {W}(p)^{A_{m}}$ is $C_{2}$-cofinite.
\end{theorem}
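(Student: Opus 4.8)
The plan is to show that the Zhu Poisson algebra $\mathcal{P}(\am)=\am/C_2(\am)$ is finite-dimensional, which is equivalent to $C_2$-cofiniteness. By Theorem \ref{1a} the algebra $\am$ is strongly generated by $E^{(m)},F^{(m)},H$ and $\omega$, so $\mathcal{P}(\am)$ is generated as a commutative algebra by the four classes $\overline{E^{(m)}},\overline{F^{(m)}},\overline{H},\overline{\omega}$; hence it is a quotient of a polynomial ring in four variables, and it suffices to prove that each generator is nilpotent. I would organize everything around the $\mathbb{Z}$-grading of $\mathcal{P}(\am)$ by $\alpha(0)$-charge: in units of $2pm$ the element $\overline{E^{(m)}}$ has charge $+1$, $\overline{F^{(m)}}$ has charge $-1$, while $\overline{H}$ and $\overline{\omega}$ have charge $0$. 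Since $\mathcal{P}(\am)$ is commutative and $\overline{E^{(m)}}\,\overline{F^{(m)}}\in\mathcal{P}(\am)^{(0)}$, every charge-$k$ monomial reduces to $\overline{E^{(m)}}^{\,k}$ (resp. $\overline{F^{(m)}}^{\,|k|}$) times a charge-$0$ element, so $\mathcal{P}(\am)^{(k)}=\overline{E^{(m)}}^{\,k}\,\mathcal{P}(\am)^{(0)}$ for $k\geq 0$. The problem thus splits into (a) finiteness of the charge-$0$ part, and (b) vanishing of all but finitely many charge sectors.

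For part (a) the key input is Lemma \ref{4a}, which gives an integer $n>2$ with $\overline{F^{(m)}_{-n}E^{(m)}}\neq 0$ in $\mathcal{P}(\overline{M(1)})$; by Lemma \ref{3a} and the map $\varphi$ this nonzero class is a nonzero scalar multiple of a power of $\beta$, i.e. a nonzero monomial in $\overline{\omega}$ and $\overline{H}$. On the other hand, since $n\geq 2$ and $E^{(m)},F^{(m)}\in\am$, the vector $F^{(m)}_{-n}E^{(m)}$ lies in $C_2(\am)$, so its class in $\mathcal{P}(\am)$ is zero. The inclusion $\overline{M(1)}\hookrightarrow\am$ induces an algebra homomorphism $\mathcal{P}(\overline{M(1)})\to\mathcal{P}(\am)$ fixing $\overline{\omega}$ and $\overline{H}$, so a nonzero monomial in $\overline{\omega},\overline{H}$ becomes $0$ in $\mathcal{P}(\am)$. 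Together with the relation $\overline{H}^2=C_p\,\overline{\omega}^{2p-1}$ of Lemma \ref{3a}, this forces $\overline{\omega}$ (hence $\overline{H}$) to be nilpotent, and therefore $\mathcal{P}(\am)^{(0)}$ is finite-dimensional. I regard this as the clean, decisive step, with Lemma \ref{4a} as its engine.

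Part (b), the nilpotency of $\overline{E^{(m)}}$ and $\overline{F^{(m)}}$, is the main obstacle, and it cannot be settled by charge/weight bookkeeping alone: modulo the nilpotent ideal $(\overline{\omega},\overline{H})$ one only obtains $\overline{E^{(m)}}\,\overline{F^{(m)}}=0$ (for weight reasons $\overline{E^{(m)}}\,\overline{F^{(m)}}=\overline{E^{(m)}_{-1}F^{(m)}}$ has positive conformal weight $2\Delta$ and so no constant term), which still permits infinite $\mathbb{C}[\overline{E^{(m)}}]$- and $\mathbb{C}[\overline{F^{(m)}}]$-tails. What is genuinely required is a relation expressing some high power $\overline{E^{(m)}}^{\,k_0}$ — equivalently the top vector $u_{k_0 m}^{2k_0 m}=Q^{2k_0 m}e^{-k_0 m\alpha}$ of the charge-$k_0$ shell — through $C_2(\am)$, lower shells, and the ideal $(\overline{\omega},\overline{H})$. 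Once such a relation $\overline{E^{(m)}}^{\,k_0}\in(\overline{\omega},\overline{H})\cdot\mathcal{P}(\am)^{(k_0)}$ is available, the grading writes it as $\overline{E^{(m)}}^{\,k_0}=\overline{E^{(m)}}^{\,k_0}\,z$ with $z\in(\overline{\omega},\overline{H})$ nilpotent, so $1-z$ is a unit and $\overline{E^{(m)}}^{\,k_0}=0$; the same works for $\overline{F^{(m)}}$ by symmetry.

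To produce that single relation I would proceed exactly as in Lemma \ref{4a}: compute the $M(1)$-projection of the iterated product via a residue/constant-term evaluation of Morris type, and check that for $k_0$ large the surviving $\beta$-powers correspond to powers of $\overline{\omega}$ that already vanish by part (a); the explicit formula recorded in the Remark following Lemma \ref{4a} should control the leading terms. (Alternatively, one may invoke the finiteness of $\triplet$ as a $C_2$-cofinite $\am$-module, but I expect the authors prefer the self-contained computation.) With $\overline{\omega},\overline{H},\overline{E^{(m)}},\overline{F^{(m)}}$ all nilpotent, only finitely many charge sectors are nonzero and each is a finite-dimensional quotient of $\mathcal{P}(\am)^{(0)}$; hence $\mathcal{P}(\am)$ is finite-dimensional and $\am$ is $C_2$-cofinite.
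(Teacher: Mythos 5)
Your part (a) --- nilpotency of $\overline{\omega}$ and $\overline{H}$ obtained by combining Lemma \ref{4a} with the vanishing of $\overline{F^{(m)}_{-n}E^{(m)}}$ in $\mathcal{P}(\am)$ for $n\geq 2$ --- is exactly the paper's argument and is indeed the decisive step. The gap is in part (b). You assert that nilpotency of $\overline{E^{(m)}}$ and $\overline{F^{(m)}}$ ``cannot be settled by charge/weight bookkeeping alone,'' and you then only outline, without carrying out, a Morris-type residue computation that would put a high power $\overline{E^{(m)}}^{\,k_0}$ into the ideal $(\overline{\omega},\overline{H})$. In fact the bookkeeping does settle it, and this is what the paper uses (quoting the proof of Theorem 2.1 of \cite{am1}): one has $\overline{E^{(m)}}^{\,2}=\overline{F^{(m)}}^{\,2}=0$ outright. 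Indeed, $E^{(m)}_{-1}E^{(m)}$ lies in the charge component $\triplet\cap\bigl(M(1)\otimes e^{2m\alpha}\bigr)$, which is spanned by the Virasoro modules generated by $Q^{\,n+2m}e^{-n\alpha}$ with $n\geq 2m$ and therefore has minimal conformal weight $(2m)^2p+2mp-2m$; since $E^{(m)}_{-1}E^{(m)}$ has weight $2(m^2p+mp-m)<4m^2p+2mp-2m$, it vanishes. So no large power $k_0$, no extra constant-term identity, and no unit trick with $1-z$ are needed.

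As written, your proof is therefore incomplete: the key relation $\overline{E^{(m)}}^{\,k_0}\in(\overline{\omega},\overline{H})\cdot\mathcal{P}(\am)$ is never established (you say you ``would proceed exactly as in Lemma \ref{4a}'' and ``check'' a condition on surviving $\beta$-powers), and the stated reason for needing it rests on an incorrect assessment of what the charge grading yields. The rest of your argument --- generation of $\mathcal{P}(\am)$ from Theorem \ref{1a}, the relation $\overline{H}^2=C_p\overline{\omega}^{2p-1}$ from Lemma \ref{3a}, the use of Lemma \ref{4a}, and the final finiteness conclusion --- matches the paper's proof.
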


\begin{proof}
As in the proof of Theorem 2.1 in \cite{am1}, we deduce from Theorem \ref{1a} that the commutative algebra $\mathcal {W}(p)^{A_{m}}/C_{2}(\mathcal {W}(p)^{A_{m}})$
is generated by $\overline{E ^{(m)} }, \overline{F ^{(m)} }, \overline{H} $ and $ \overline{\omega}$. Moreover $$\overline{E ^{(m)} }^{2}=\overline{F ^{(m)} }^{2}=0$$
 and $$\overline{H}^{2}=C_{p}\overline{\omega}^{2p-1}.$$
According to Lemma \ref{4a}, there exists some integer $n$, satisfying $n\geq 2$ and $\overline{F ^{(m)} _{-n}E ^{(m)}}\neq0$ in $M(1)/C_{2}(M(1))$.
%
This implies that
$$ \overline{F ^{(m)} _{-n}E ^{(m)}} = a \ \overline{\omega} ^k \overline{H} \quad ( k=m ^2 p + mp -m -2 + n/2, \ a \in {\Bbb C}, a \ne 0)$$
if $n$ is even,
or
$$ \overline{F ^{(m)} _{-n}E ^{(m)}} = a \ \overline{\omega} ^k   \quad (k = m  ^2 p + m p - m + (n-1)/2, \ \ a \in {\Bbb C}, a \ne 0)$$
if $n$ is odd.

 On the other hand, we have $\overline{F ^{(m)} _{-n}E ^{(m) } }=0$ in $\mathcal {W}(p)^{A_{m}}/C_{2}(\mathcal {W}(p)^{A_{m}})$ for $n\geq2$. This implies that
 $ \overline{\omega}, \overline{H}$ are also nilpotent in $\mathcal{P} (\am)$.

Therefore, $\mathcal {W}(p)^{A_{m}}/C_{2}(\mathcal {W}(p)^{A_{m}})$ is finite-dimensional.
\end{proof}

 \begin{remark}
In our forthcoming paper \cite{alm2}, we shall prove $C_2$-cofiniteness of the orbifold vertex algebra $\triplet ^{D_m}$ and classify irreducible modules of its subalgebra $\overline{M(1)} ^+$.
 \end{remark}

\section{Towards irreducible $\am$-modules}

\label{konstrukcija-modula}

There are several clues leading to classification of irreducible modules for orbifold vertex algebras.
Folklore meta-theorem in this direction says that
all $V^G$--modules should come from $V$-modules (by restrictions) and from
 $g$-twisted $V$-modules, where $g \in G$. This can be made more precise if the category of
 $V$-modules is a modular tensor category. Guided by this principle, we now investigate irreducible $\am$-modules. In this part we classify
 such irreducible modules. Conjecturally, these are all irreducible modules. Support for
 that is presented in next sections.

\subsection{Irreducible $\am$-modules: \ $\Lambda$-series}

In this part we construct $pm$ irreducible $\am$-modules starting from
$\triplet$-modules  $\Lambda(1)$,...,$\Lambda(p)$ (we use notation from \cite{am1}).
Recall that $\Lambda(i)$ has lowest conformal weight
$${\rm deg}(e^{(i-1) \alpha/2p})=\frac{(i-1)(i-1-2p+2)}{4p}=h_{i,1}  , \ \ \ i=1,...,p.$$
These modules are denoted by $\Lambda(1)$,...,$\Lambda(p)$ in \cite{am1}.
The next decomposition is well-known (cf. \cite{am1}):

$$\Lambda(i)=\bigoplus_{n=0}^\infty (2n+1) L(c_{p,1},h_{i,2n+1}),$$
where the null vectors are given by $Q^j e^{-n\alpha+(i-1)\alpha/2p}$ in the natural range.

Let now $m=2k$ (even).

Set
$$\Lambda(i)_0=\am \cdot  e^{(i-1)\alpha/2p}$$
$$\Lambda(i)^-_j=\am \cdot  e^{-j \alpha+\frac{(i-1) \alpha}{2p}}, \ \ j=1,...,k-1$$
$$\Lambda(i)^+_j=\am \cdot Q^{2j} e^{-j \alpha+\frac{(i-1) \alpha}{2p}}, \ \ j=1,...,k-1$$
$$\Lambda(i)_m=\am \cdot e^{- m\alpha/2+  \frac{(i-1) \alpha}{2p}  }$$

The next result comes immediately from the above decomposition

\begin{proposition} \label{lambda-even} Let $m=2k$. As Virasoro modules

\begin{itemize}

\item[(1)]

$$\Lambda(i)_0=\bigoplus_{n=0}^\infty (2n+1) \bigoplus_{k=0}^{m-1} L(c_{p,1},h_{i,2(nm+k)+1}),$$

\item[(2)]

$$\Lambda(i)^-_j=\Lambda(i)^+_j= \bigoplus  _{n=0}^\infty (2n+1) \bigoplus_{k=j}^{m-j-1} L(c_{p,1},h_{i,2(nm+k)+1})$$
$$+\bigoplus  _{n=0}^\infty (2n+2) \bigoplus_{k=m-j}^{m+j-1} L(c_{p,1},h_{i,2(nm+k)+1}),$$

\item[(3)]

$$\Lambda(i)_m=\bigoplus_{n=0}^\infty (2n+2) \bigoplus_{k=0}^{m-1} L(c_{p,1},h_{i,2(nm+\frac{m}{2}+k)+1}).$$

\end{itemize}

\end{proposition}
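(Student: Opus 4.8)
The plan is to read the entire statement off the $\alpha(0)$--grading of $\Lambda(i)$, so that the only genuine computation is a residue count. Since $\am\subseteq V_L^{\sigma}$ and every element of $V_L^{\sigma}$ has $\alpha(0)$--eigenvalue in $2pm\Z$, each mode $a_t$ with $a\in\am$ shifts the $\alpha(0)$--eigenvalue by a multiple of $2pm$; hence the $\am$--action preserves $\alpha(0)$ modulo $2pm$, and $\Lambda(i)=\bigoplus_{r\in\Z/m}\Lambda(i)^{(r)}$ splits into $\sigma$--eigenspaces, each an $\am$--submodule. Because $\omega\in\am$ is of charge zero, $\alpha(0)$ commutes with the Virasoro action, so every irreducible Virasoro summand $\mathrm{Vir}\cdot Q^{j}e^{-n\alpha+(i-1)\alpha/2p}$ of the given decomposition lies entirely in the single eigenspace with $r\equiv j-n\pmod m$. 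Thus each $\Lambda(i)^{(r)}$ is already a direct sum of the irreducibles $L(c_{p,1},h_{i,2n+1})$ attached to those singular vectors $v_{n,j}:=Q^{j}e^{-n\alpha+(i-1)\alpha/2p}$ whose grade is $r$.

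Next I would locate the four families of generators among the $v_{n,j}$, recalling that $v_{n,j}$ has weight $h_{i,2n+1}$ and $\alpha(0)$--eigenvalue $2p(j-n)+(i-1)$, hence grade $r\equiv j-n$. One finds $e^{(i-1)\alpha/2p}=v_{0,0}$ in grade $0$, $e^{-j\alpha+(i-1)\alpha/2p}=v_{j,0}$ in grade $-j$, $Q^{2j}e^{-j\alpha+(i-1)\alpha/2p}=v_{j,2j}$ in grade $+j$, and $e^{-m\alpha/2+(i-1)\alpha/2p}=v_{m/2,0}$ in grade $-m/2\equiv m/2$. Since $m=2k$, the residues $0,\pm1,\dots,\pm(m/2-1),m/2$ exhaust $\Z/m$, so the listed modules correspond bijectively to the eigenspaces. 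The one point requiring genuine input is the identification $\am\cdot v_{n,j}=\Lambda(i)^{(r)}$: the inclusion $\subseteq$ is automatic, and for $\supseteq$ I would use that $\Lambda(i)$ is cyclic over $\triplet$, generated by $e^{(i-1)\alpha/2p}$, together with the fact that $F^{(m)}=e^{-m\alpha}$ and $E^{(m)}=Q^{2m}e^{-m\alpha}$ carry $\alpha(0)$--eigenvalue $-2pm$ and $+2pm$, so their modes act within a fixed eigenspace and connect singular vectors at different levels; the nonvanishing of the relevant coefficients is exactly the content of Lemma \ref{4a} and its Remark. I expect this identification to be the main obstacle, although it is routine given the structural results already established.

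With the identification in place the decomposition is pure bookkeeping: the multiplicity of $L(c_{p,1},h_{i,2N+1})$ in $\Lambda(i)^{(r)}$ is the number of $j\in\{0,1,\dots,2N\}$ with $j\equiv N+r\pmod m$, namely $\lfloor(2N-c)/m\rfloor+1$ with $c=(N+r)\bmod m$. For $r=0$ this is $2\lfloor N/m\rfloor+1$, and writing $N=nm+k$ gives multiplicity $2n+1$ for each $k\in\{0,\dots,m-1\}$, which is (1). For $r=\pm j$ one distinguishes whether $k\pm j$ crosses $m$; the overflow is absorbed by reindexing $N=(n+1)m+k''$, which raises the multiplicity to $2n+2$ and produces exactly the two ranges $k\in[j,m-j-1]$ (multiplicity $2n+1$) and $k\in[m-j,m+j-1]$ (multiplicity $2n+2$) of (2). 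For $r=m/2$ the shift $N+m/2$ always carries, so the multiplicity is uniformly $2n+2$ over $N=nm+\tfrac m2+k$, giving (3).

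Finally, the asserted equality $\Lambda(i)^-_j=\Lambda(i)^+_j$ as Virasoro modules comes from the reflection $j\mapsto 2N-j$, a bijection of $\{0,\dots,2N\}$ that negates $j-N$ and therefore matches the grade-$(-j)$ singular vectors with the grade-$(+j)$ ones level by level; although the two modules sit in different eigenspaces, their Virasoro multiplicity functions coincide. Assembling the three counts together with this symmetry yields the stated decompositions.
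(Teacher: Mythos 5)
Your proposal is correct and follows the route the paper leaves implicit: decompose $\Lambda(i)$ into $\sigma$-eigenspaces graded by $(j-n)\bmod m$, note each eigenspace is an $\am$-submodule that is a sum of the Virasoro irreducibles attached to the singular vectors $Q^{j}e^{-n\alpha+(i-1)\alpha/2p}$, and count residues. I checked your multiplicity formula $\lfloor(2N-c)/m\rfloor+1$ in all three cases, including the reindexing $N=(n+1)m+K'$ that absorbs the overflow range, and it reproduces (1)--(3) exactly; the paper simply asserts the result "comes immediately from the above decomposition," so your bookkeeping is the content it omits. The one step where your justification is off-target is the identification $\am\cdot v_{n,j}=\Lambda(i)^{(r)}$: Lemma \ref{4a} and its Remark concern $E^{(m)}_nF^{(m)}$ acting in the vacuum module and do not directly give the surjectivity you need, and cyclicity of $\Lambda(i)$ on the single vector $e^{(i-1)\alpha/2p}$ is not quite enough for the generators of the other eigenspaces. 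The clean argument — the one the paper uses in the proof of Theorem \ref{irrL} — is that $\Lambda(i)$ is \emph{irreducible} over $\triplet$, hence cyclic on every nonzero vector $v$; writing $w=\sum_s w^{(s)}$ in eigencomponents and projecting $\mathrm{Span}\{w_nv\}=\Lambda(i)$ onto the eigenspace of $v$ shows that only $w^{(0)}\in\am$ contributes, so $\am\cdot v$ is the whole eigenspace (and is irreducible). Substituting that argument for your appeal to Lemma \ref{4a} makes the proof complete.
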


For $m=2k+1$ (odd case) the module $\Lambda(i)_m$ does not appear in the decomposition. Instead we have
$\Lambda(i)_0$ and $\Lambda(i)_j^\pm$, $j=1,...,k$ defined by the same formulas as above.

\begin{proposition} \label{lambda-odd} For $m=2k+1 \geq 1$, we have

\begin{itemize}

\item[(1)]

$$\Lambda(i)_0=\bigoplus_{n=0}^\infty (2n+1) \bigoplus_{k=0}^{m-1} L(c_{p,1},h_{i,2(nm+k)+1}),$$

\item[(2)]

$$\Lambda(i)^-_j=\Lambda(i)^+_j= \bigoplus  _{n=0}^\infty (2n+1) \bigoplus_{k=j}^{m-j-1} L(c_{p,1},h_{i,2(nm+k)+1})$$
$$+\bigoplus  _{n=0}^\infty (2n+2) \bigoplus_{k=m-j}^{m+j-1} L(c_{p,1},h_{i,2(nm+k)+1}).$$



\end{itemize}

\end{proposition}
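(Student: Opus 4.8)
The plan is to identify each of the modules $\Lambda(i)_0,\Lambda(i)^{\pm}_j$ with an eigenspace of $\sigma$ inside $\Lambda(i)$, and then read off the Virasoro content by a purely combinatorial count of singular vectors. First I would observe that, since $\sigma=\exp(\pi i\,\alpha(0)/(pm))$ fixes $\omega$, it commutes with the Virasoro action on $\Lambda(i)$ and hence acts as a scalar on each irreducible summand of $\Lambda(i)=\bigoplus_{N\geq0}(2N+1)L(c_{p,1},h_{i,2N+1})$. On the singular vector $Q^{\ell}e^{-N\alpha+(i-1)\alpha/2p}$ (with $0\leq\ell\leq 2N$) the operator $\alpha(0)$ has eigenvalue $2p(\ell-N)+(i-1)$, so $\sigma$ acts by $\zeta_i\,\omega_m^{\,\ell-N}$, where $\zeta_i=e^{\pi i(i-1)/(pm)}$ and $\omega_m=e^{2\pi i/m}$. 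Thus the corresponding copy of $L(c_{p,1},h_{i,2N+1})$ lies entirely in the $\sigma$-eigenspace $\Lambda(i)^{(s)}$ indexed by $s\equiv\ell-N\pmod m$, and $\Lambda(i)=\bigoplus_{s=0}^{m-1}\Lambda(i)^{(s)}$ as a Virasoro module.

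Next I would prove that $\am\cdot v=\Lambda(i)^{(s)}$ for each generating vector $v$, which lies in a single eigenspace $\Lambda(i)^{(s)}$. The strong generators $E^{(m)},F^{(m)},H,\omega$ of $\am$ are $\sigma$-fixed (their $\alpha(0)$-weights are $2pm,-2pm,0,0$), so $\am=\triplet^{\sigma}$ preserves each eigenspace and $\am\cdot v\subseteq\Lambda(i)^{(s)}$. For the reverse inclusion I would use that $\Lambda(i)$ is irreducible as a $\triplet$-module (cf.\ \cite{am1}), whence $\triplet\cdot v=\Lambda(i)$ for every $v\neq0$. Writing $\triplet=\bigoplus_{r=0}^{m-1}\triplet^{(r)}$ with $\triplet^{(r)}\cdot\Lambda(i)^{(s)}\subseteq\Lambda(i)^{(s+r)}$ and $\triplet^{(0)}=\am$, the eigenspace projection $P_s$ applied to any expression for $w\in\Lambda(i)^{(s)}$ kills all but the $\triplet^{(0)}$-part, giving $w\in\am\cdot v$. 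This yields $\Lambda(i)_0=\Lambda(i)^{(0)}$, $\Lambda(i)^{+}_j=\Lambda(i)^{(j)}$, and $\Lambda(i)^{-}_j=\Lambda(i)^{(m-j)}$; here $Q^{2j}e^{-j\alpha+(i-1)\alpha/2p}\neq0$ by Lemma \ref{zz}.

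It then remains to compute, for fixed $s$ and level $N$, the multiplicity of $L(c_{p,1},h_{i,2N+1})$ in $\Lambda(i)^{(s)}$, which by the first step equals $\#\{\ell:0\leq\ell\leq 2N,\ \ell\equiv N+s\ (\mathrm{mod}\ m)\}$. Writing $N=qm+t$ with $0\leq t\leq m-1$, an elementary count gives $2q+1$ for all $t$ when $s=0$, and for $1\leq s\leq k$ gives $2q$ for $t\in[0,s-1]$, $2q+1$ for $t\in[s,m-1-s]$, and $2q+2$ for $t\in[m-s,m-1]$. Re-indexing the level by $(q,t)$ reproduces (1) and (2) verbatim, the second sum in (2) accounting for the range $t\in[0,j-1]$ absorbed into level $q+1$. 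Finally, the bijection $\ell\mapsto 2N-\ell$ of $\{0,\dots,2N\}$ carries the residue class $N+s$ onto $N-s$, so $\Lambda(i)^{(j)}$ and $\Lambda(i)^{(m-j)}$ have equal multiplicities, which gives $\Lambda(i)^{-}_j=\Lambda(i)^{+}_j$.

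The step deserving genuine care, rather than the routine counting, is the identification $\am\cdot v=\Lambda(i)^{(s)}$ for the higher singular vectors $v=e^{-j\alpha+(i-1)\alpha/2p}$ and $v=Q^{2j}e^{-j\alpha+(i-1)\alpha/2p}$, which are not lowest-weight: it is precisely the irreducibility of the $\triplet$-module $\Lambda(i)$ that forces $\triplet\cdot v=\Lambda(i)$ and lets the eigenprojection argument recover the whole eigenspace. If one wished to avoid invoking irreducibility, the same conclusion should follow by checking directly that suitable modes of $E^{(m)},F^{(m)},H$ connect all singular vectors within a fixed residue class, in the spirit of Lemma \ref{4a}.
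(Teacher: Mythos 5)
Your proposal is correct and follows essentially the same route the paper intends: the paper declares Propositions \ref{lambda-even} and \ref{lambda-odd} "immediate" from the decomposition $\Lambda(i)=\bigoplus_n(2n+1)L(c_{p,1},h_{i,2n+1})$ with singular vectors $Q^{\ell}e^{-n\alpha+(i-1)\alpha/2p}$, and your argument simply makes explicit the $\sigma$-eigenspace sorting of these singular vectors by $\ell-n \bmod m$ and the resulting multiplicity count, while your identification $\am\cdot v=\Lambda(i)^{(s)}$ via irreducibility of $\Lambda(i)$ and eigenprojection is exactly the mechanism used in the paper's proof of Theorem \ref{irrL}. The only cosmetic quibble is that the nonvanishing of $Q^{2j}e^{-j\alpha+(i-1)\alpha/2p}$ rests on the shifted analogue of Lemma \ref{zz} (part of the quoted decomposition of $\Lambda(i)$) rather than on Lemma \ref{zz} itself.
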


As in \cite{am1} we infer
\begin{theorem} \label{irrL}
$\Lambda(i)^\pm_j$ are irreducible $\am$-modules. In particular, the vertex algebra $\am$ is simple.

\end{theorem}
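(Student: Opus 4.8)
The plan is to prove simplicity of $\am$ and irreducibility of the modules $\Lambda(i)^\pm_j$ in one stroke, by showing that each such module is generated by any nonzero vector. First I would recall the decompositions in Propositions \ref{lambda-even} and \ref{lambda-odd}: each $\Lambda(i)^\pm_j$ decomposes as a direct sum of irreducible Virasoro modules $L(c_{p,1}, h_{i,\ast})$, each appearing with a definite multiplicity. Since the ambient $\triplet$-module $\Lambda(i)$ is known to be irreducible (from \cite{am1}), the strategy is to leverage this: the $\am$-module $\Lambda(i)^\pm_j$ sits inside $\Lambda(i)$, and I would show that starting from any nonzero $w$ in $\Lambda(i)^\pm_j$, repeated application of the generating fields $E^{(m)}, F^{(m)}, H, \omega$ (Theorem \ref{1a}) produces a nonzero vector in each Virasoro-isotypic summand, and within each summand sweeps out the full Virasoro submodule.

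The key steps, in order, are as follows. \emph{Step 1:} Reduce to a lowest-weight (singular) vector. Using the Virasoro grading, from any nonzero $w$ I can apply Virasoro lowering operators $L_n$ ($n>0$) to reach a singular vector of one of the summands $L(c_{p,1},h_{i,2(nm+k)+1})$; this uses only the $\omega$-action and the already-known irreducibility of each Virasoro factor. \emph{Step 2:} Show the generating operators $E^{(m)}, F^{(m)}, H$ connect adjacent Virasoro summands. Concretely, the singular vectors of $\Lambda(i)$ are the $Q^\ell e^{-n\alpha + (i-1)\alpha/2p}$, and the actions of $E^{(m)}=Q^{2m}e^{-m\alpha}$, $F^{(m)}=e^{-m\alpha}$, $H=Qe^{-\alpha}$ (via their modes) map between these according to the underlying $\mathfrak{sl}_2$-structure recorded in \eqref{eqs} and Lemma \ref{1h}. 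I would verify that from one singular vector one reaches, up to nonzero scalar, a singular vector in a neighboring summand, and that iterating reaches every summand in the decomposition. \emph{Step 3:} Conclude that the $\am$-submodule generated by any nonzero vector is all of $\Lambda(i)^\pm_j$, hence the module is irreducible; taking $i=1$, $j=0$ (so $\Lambda(1)_0 = \am$ itself) gives simplicity of $\am$ as a special case.

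The main obstacle I expect is \emph{Step 2}: controlling the modes of the generating fields precisely enough to guarantee that no summand is missed and that the relevant structure constants are nonzero. The formula in the Remark after Lemma \ref{4a}, giving $E^{(m)}_n F^{(m)} = F^{(m)}_n E^{(m)} = \delta_{n,2m(mp+p-1)-1} C_{m,p}\mathbf{1}$ with $C_{m,p}\neq 0$, is exactly the kind of nonvanishing statement needed to guarantee these transitions do not accidentally vanish, so I would lean on it and on analogous residue computations. The bookkeeping is delicate because the multiplicities jump (note the $(2n+1)$ versus $(2n+2)$ patterns in Propositions \ref{lambda-even} and \ref{lambda-odd}), so one must check that the generating operators genuinely realize these multiplicity shifts rather than stabilizing within a proper submodule. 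Since the authors write ``As in \cite{am1} we infer,'' I anticipate the actual argument is a transport of the $\triplet$-irreducibility proof through the $\sigma$-invariants, and I would model my write-up on that template, citing the intertwining relations among $E^{(m)},F^{(m)},H$ and the irreducibility of $\Lambda(i)$ as the essential inputs.
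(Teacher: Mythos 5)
Your main proposed route (Steps 1--3: descend to a Virasoro singular vector, then chase singular vectors between summands using the modes of $E^{(m)}, F^{(m)}, H$) is not the paper's argument, and as written it has a genuine gap exactly where you predict one: Step 2 is never established. To make that step work you would need to exhibit, for every pair of adjacent Virasoro summands in Propositions \ref{lambda-even}/\ref{lambda-odd}, a mode of a generator carrying a singular vector of one summand to a vector with nonzero component in the other, and to verify that the resulting "transition graph" is connected for every starting vector --- including vectors that are linear combinations spread across several copies of the same $L(c_{p,1},h)$ when the multiplicity exceeds one. The identity $E^{(m)}_n F^{(m)}=\delta_{n,2m(mp+p-1)-1}C_{m,p}\mathbf{1}$ only controls one matrix element (the one landing on the vacuum) and does not by itself give the needed nonvanishing for the action on $\Lambda(i)$; so the proposal stops short of a proof.

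The paper's actual proof sidesteps all of this with the eigenspace projection argument you gesture at only in your final sentence. Since $\Lambda(i)$ is an irreducible $\triplet$-module, for any nonzero $v\in\Lambda(i)^\epsilon_j$ one has $\Lambda(i)=\mathrm{Span}\{w_nv: n\in\mathbb{Z},\ w\in\triplet\}$. The automorphism $\sigma$ acts on $\Lambda(i)$, its eigenspaces are precisely the $\Lambda(i)^\epsilon_j$, and $w_nv$ lies in the eigenspace whose eigenvalue is the product of those of $w$ and $v$; projecting onto the eigenspace containing $v$ therefore shows $\Lambda(i)^\epsilon_j=\mathrm{Span}\{w_nv: w\in\am\}$, i.e. every nonzero vector generates, so the module is irreducible (and $i=1$, $j=0$ gives simplicity of $\am$). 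This argument needs no singular-vector bookkeeping, no multiplicity analysis, and no nonvanishing of structure constants --- only the irreducibility of $\Lambda(i)$ over $\triplet$ and the single-mode spanning property of irreducible modules. If you rewrite your proof along these lines, your Steps 1 and 2 become unnecessary; if you insist on your route, you must actually supply the transition computations of Step 2, which is a substantial piece of work comparable to the original irreducibility proofs in \cite{am1}.
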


\begin{proof} We first recall that $\Lambda(i)$ is an irreducible $\triplet$-modules. We have eigenspace decomposition
$$\Lambda(i)=\oplus_{j, \epsilon \in \pm} \Lambda(i)_j^\epsilon$$
with respect to the action of the automorphism $\sigma$. In particular we have a decomposition of $\Lambda(1)=\triplet$.
Fix $0 \neq v \in \Lambda(i)^\epsilon_j$. The we have $$\Lambda(i)={\rm Span}\{ w_n v : n \in \mathbb{Z}, \ w \in \triplet  \}.$$
But eigenvalue decomposition implies
 $$\Lambda(i)^\epsilon_j={\rm Span}\{ w_n v : n \in \mathbb{Z}, \ w \in \am \},$$
meaning that $\Lambda(i)_j^\epsilon$ are irreducible.
\end{proof}
\vskip 5mm

\subsection{Irreducible $\am$-modules:  $\Pi$-series}

Recall that for $i \in \{1,...,p \}$,
$$\Pi(i)=\triplet \cdot e^{ \frac{-p-1 + i}{2p } \alpha},$$
and
$$\Pi(i)=\bigoplus_{n=1}^\infty (2n) L(c_{p,1}, h_{i+p,2n+1}).$$

Top component of $\Pi(i)$ is spanned by $e^{ \frac{-p-1 + i}{2p } \alpha}$ and $Q e^{ \frac{-p-1 + i}{2p } \alpha}$.

Let first $m=2k$ (even).

Consider now  for $j=1,...,k$.  Let

$$\Pi(i)^-_j=\am \cdot   e^{ \frac{-p-1 + i}{2p } \alpha - (j-1) \alpha}, \ \ j=1,...,k.$$
$$\Pi(i)^+_j=\am \cdot Q^{2j-1} e^{ \frac{-p-1 + i}{2p } \alpha - (j-1) \alpha}  \ \ j=1,...,k.$$

Similarly, for $m=2k+1$ (odd case) we let
$\Pi(i)^\pm_j$ as above. In addition, we define
$$\Pi(i)_{m}= {\am} . Q^{2k+1} e^{ \frac{-p-1 + i}{2p } \alpha - k \alpha} .$$

\begin{proposition} \label{pi-even} Let $m=2k$. For $j=1,..,k$, we have decomposition of Virasoro modules:

$$\Pi(i)^-_j=\Pi(i)^+_j= \bigoplus  _{n=0}^\infty (2n+1) \bigoplus_{k=j}^{m-j} L(c_{p,1},h_{i+p,2(nm+k)+1})$$
$$+\bigoplus  _{n=0}^\infty (2n+2) \bigoplus_{k=m-j+1}^{m+j-1} L(c_{p,1},h_{i+p,2(nm+k)+1}).$$

\end{proposition}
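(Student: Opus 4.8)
The plan is to realize the $\am$-modules $\Pi(i)_j^{\pm}$ as $\sigma$-eigenspaces of $\Pi(i)$ and then read off their Virasoro content by tracking the $\sigma$-eigenvalues of the singular vectors, in complete parallel with the $\Lambda$-series treatment in Propositions \ref{lambda-even} and \ref{lambda-odd}. First I would record the eigenvalue computation. The automorphism $\sigma=\exp(\pi i\,\alpha(0)/(pm))$ acts on $\Pi(i)\subset V_{\gamma_0+L}$, where $\gamma_0=\frac{-p-1+i}{2p}\alpha$, and a vector of $\alpha$-charge $\beta\alpha$ is a $\sigma$-eigenvector of eigenvalue $\exp(2\pi i\beta/m)$. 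Since $Q$ commutes with $L_0$ and raises the $\alpha$-charge by $\alpha$, the singular vectors at level $n$ are $Q^{\ell}e^{-(n-1)\alpha+\gamma_0}$ with $0\le\ell\le 2n-1$: a direct computation (as for the weights $h_{i,1}$ in the $\Lambda$-case) gives $\deg e^{-(n-1)\alpha+\gamma_0}=h_{i+p,2n+1}$, and the given decomposition shows this multiplicity space is $2n$-dimensional, forcing $Q^{2n}e^{-(n-1)\alpha+\gamma_0}=0$. Hence the $\sigma$-eigenvalue of $Q^{\ell}e^{-(n-1)\alpha+\gamma_0}$ depends only on the index $t:=\ell-(n-1)\pmod m$, and $t$ ranges over $\{-(n-1),\dots,n\}$ as $\ell$ runs over $\{0,\dots,2n-1\}$. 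In particular $e^{\gamma_0-(j-1)\alpha}$ and $Q^{2j-1}e^{\gamma_0-(j-1)\alpha}$ are the bottom ($\ell=0$) and top ($\ell=2j-1$) singular vectors at level $n=j$, lying in the eigenspaces of indices $1-j$ and $j$ modulo $m$ respectively.

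Next I would identify $\Pi(i)_j^{\pm}=\am\cdot v$ with the entire $\sigma$-eigenspace of its generator $v$. Since $\Pi(i)$ is an irreducible $\triplet$-module, $\Pi(i)=\mathrm{Span}\{w_nv:n\in\Z,\ w\in\triplet\}$; intersecting with the eigenspace decomposition $\Pi(i)=\bigoplus_t\Pi(i)_t$ and using that every $w\in\am=\triplet^{\sigma}$ preserves $\sigma$-eigenspaces, one obtains $\am\cdot v=\Pi(i)_t$, where $t$ is the index of $v$. This is verbatim the argument in the proof of Theorem \ref{irrL}.

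The decomposition then reduces to counting: the multiplicity of $L(c_{p,1},h_{i+p,2N+1})$ in the eigenspace of index $t$ equals the number of integers in $\{-(N-1),\dots,N\}$ congruent to $t$ modulo $m$, namely $\lfloor\tfrac{N-t}{m}\rfloor+\lfloor\tfrac{N-1+t}{m}\rfloor+1$. Taking $t=j$ (the index of $\Pi(i)_j^{+}$) and writing $N=nm+b$ with $0\le b\le m-1$, this count equals $2n+1$ for $b\in\{j,\dots,m-j\}$, equals $2n+2$ for $b\in\{m-j+1,\dots,m-1\}$, and equals $2n$ for $b\in\{0,\dots,j-1\}$. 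Rewriting each term of the last case as $N=nm+b=(n-1)m+(b+m)$ reassigns it to index $b+m\in\{m,\dots,m+j-1\}$ with multiplicity $2n=2(n-1)+2$, so the $(2n+2)$-terms and the $2n$-terms merge into the single second family $b\in\{m-j+1,\dots,m+j-1\}$, leaving $b\in\{j,\dots,m-j\}$ as the first family; this is exactly the claimed decomposition of $\Pi(i)_j^{+}$.

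Finally, the isomorphism $\Pi(i)_j^{-}\cong\Pi(i)_j^{+}$ I would deduce from the involution $c\mapsto 1-c$ on the index set $\{-(N-1),\dots,N\}$: it preserves this set and, since $c\equiv 1-j$ iff $1-c\equiv j\pmod m$, it matches the indices of $\Pi(i)_j^{-}$ and $\Pi(i)_j^{+}$ level by level, so both eigenspaces carry the same Virasoro decomposition. Concretely this reflection is realized by the order-two automorphism $\Psi$ of the Introduction (equivalently by the $f$-action of the hidden $\mathfrak{sl}_2$, cf. \eqref{eqs}), which commutes with the Virasoro algebra; hence $Q^{2j-1}$ induces a genuine Virasoro-module isomorphism $\Pi(i)_j^{-}\to\Pi(i)_j^{+}$. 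The main obstacle is the bookkeeping in the counting step — getting the index ranges and the level shift right so that the piecewise multiplicities $2n,2n+1,2n+2$ reorganize precisely into the two stated families; everything else is a direct transcription of the $\Lambda$-series argument.
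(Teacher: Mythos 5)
Your proposal is correct and is essentially the argument the paper intends: the paper states this proposition without proof, remarking only that it follows from the Virasoro decomposition of $\Pi(i)$ and the same $\sigma$-eigenspace argument used for Theorem \ref{irrL}, and your eigenvalue bookkeeping and multiplicity count $2n/2n{+}1/2n{+}2$ (with the level shift merging the first and third cases) is exactly the computation being left to the reader. The only inessential over-reach is attributing the $+/-$ symmetry to the automorphism $\Psi$ (defined on $\triplet$, not on its modules); your combinatorial reflection $c\mapsto 1-c$ on the index set already suffices.
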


\begin{proposition} \label{pi-odd} Let $m=2k+1$.

 For $j=1,..,k$, we have decomposition of Virasoro modules:

$$\Pi(i)^-_j=\Pi(i)^+_j= \bigoplus  _{n=0}^\infty (2n+1) \bigoplus_{k=j}^{m-j} L(c_{p,1},h_{i+p,2(nm+k)+1})$$
$$+\bigoplus  _{n=0}^\infty (2n+2) \bigoplus_{k=m-j+1}^{m+j-1} L(c_{p,1},h_{i+p,2(nm+k)+1}),$$

and
$$\Pi(i)_m=\bigoplus_{n=1}^\infty (2n) \bigoplus_{k=0}^{m-1} L(c_{p,1},h_{i,2(nm+\frac{m-1}{2}+k)+1}).$$

\end{proposition}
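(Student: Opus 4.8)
The plan is to reduce the claim to a counting problem for the $\sigma$-eigenspaces of $\Pi(i)$, in the same spirit as Proposition~\ref{pi-even} and Theorem~\ref{irrL}. Put $\gamma=\tfrac{-p-1+i}{2p}$, so that every weight vector of $\Pi(i)$ is a descendant of some $e^{\mu\alpha}$ with $\mu=\gamma+t$, $t\in\Z$; the automorphism $\sigma=\exp(\pi i\,\alpha(0)/(pm))$ then acts on such a vector by the scalar $\exp(2\pi i\mu/m)$, which depends only on $t\bmod m$. I will call $t\bmod m$ the $\sigma$-sector of the vector. Since every element of $\am=\triplet\cap V^\sigma_L$ has $\alpha$-weight in $m\Z\alpha$, each of its modes changes $t$ by a multiple of $m$; hence $\am$ preserves every $\sigma$-sector. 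First I would record the sector of each generating vector: $e^{\gamma\alpha-(j-1)\alpha}$ lies in sector $1-j$, the vector $Q^{2j-1}e^{\gamma\alpha-(j-1)\alpha}$ in sector $j$, and $Q^{2k+1}e^{\gamma\alpha-k\alpha}$ in sector $k+1$. Because $m=2k+1$ is odd, the residues $\{\,1-j,\ j:1\le j\le k\,\}\cup\{k+1\}$ run once through $\Z/m\Z$, so the submodules listed in the statement are exactly the $m$ distinct $\sigma$-eigenspaces of $\Pi(i)$.

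Next I would determine the Virasoro content of each sector. From the decomposition of $\Pi(i)$ and the explicit null vectors, the multiplicity space of $L(c_{p,1},h_{i+p,2N+1})$ is spanned by the $2N$ singular vectors $Q^{s}e^{(\gamma-(N-1))\alpha}$, $s=0,\dots,2N-1$: each is nonzero by the analogue of Lemma~\ref{zz}, and each is a Virasoro singular vector because $Q$ commutes with the Virasoro action and preserves the conformal weight. These vectors carry $t=1-N,\dots,N$, i.e. they occupy the $2N$ consecutive sectors from $1-N$ to $N$. Arguing as in the proof of Theorem~\ref{irrL}, irreducibility of $\Pi(i)$ as a $\triplet$-module together with the eigenspace decomposition shows that any nonzero vector of a given sector generates that entire sector over $\am$; consequently the $\am$-module generated by each listed vector is the full corresponding sector, and its Virasoro decomposition is obtained by collecting, for every $N$, those of the above $2N$ singular vectors whose sector equals the prescribed residue.

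What remains is the elementary count $C(N,t_0)=\#\{\,t:\ 1-N\le t\le N,\ t\equiv t_0\pmod m\,\}$. The symmetry $t\mapsto 1-t$ of the interval $[1-N,N]$ gives $C(N,1-j)=C(N,j)$, which is precisely the asserted equality $\Pi(i)^-_j=\Pi(i)^+_j$ of Virasoro modules. Writing $N=nm+l$ and evaluating $C(N,j)$ yields multiplicity $2n+1$ for $l$ in the balanced range $j\le l\le m-j$ and multiplicity $2n+2$ in the complementary range $m-j+1\le l\le m+j-1$, which is the first displayed formula; running the same count in the self-paired sector $t_0=k+1$ gives the decomposition of $\Pi(i)_m$. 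I expect this bookkeeping to be the main obstacle: one must get the two index ranges and the jump from $2n+1$ to $2n+2$ correct at the boundary, and align the reindexing $N=nm+l$, with the appropriate shift by $\tfrac{m-1}{2}$ in the self-paired sector, with the closed forms in the statement. Once sector-preservation and the spanning argument are in place, the rest is formal.
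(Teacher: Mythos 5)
Your argument is correct and is essentially the paper's own (largely implicit) proof: the paper obtains these decompositions by sorting the $2N$ singular vectors $Q^s e^{\gamma\alpha-(N-1)\alpha}$, $s=0,\dots,2N-1$, of $\Pi(i)$ into $\sigma$-eigenspaces and counting residues, with irreducibility handled exactly as in Theorem \ref{irrL}, which is precisely your sector-counting scheme. One remark: for the self-paired sector $t_0=k+1$ your count gives multiplicity $2n$ exactly for $N\in[nm-\tfrac{m-1}{2},\,nm+\tfrac{m-1}{2}]$, i.e.\ the weights $h_{i+p,\,2(nm-\frac{m-1}{2}+l)+1}$, $l=0,\dots,m-1$, whereas the displayed formula for $\Pi(i)_m$ (with first index $i$ and shift $+\tfrac{m-1}{2}$) is not of the form $h_{i+p,2N+1}$ under the identity $h_{i,s}=h_{i+p,s+1}$ and so appears to contain a typo; your bookkeeping, not the printed closed form, is the internally consistent version.
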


 As in the proof of Theorem \ref{irrL} we easily see that $\Pi(i)^\pm_j$ are all irreducible.

\subsection{Irreducible $\am$-modules: twisted series}

Recall a well-known fact.

\begin{lemma} Let $m \geq 2$. For $j=0,...,2p-1$, $i=1,...,m-1$, the space $V_{L+\frac{j-\frac{i}{m}}{2p} \alpha}$ has a $\sigma^i$-twisted $V_L$-module structure.
Moreover, $V_{L+\frac{j-\frac{i}{m}}{2p} \alpha}$  is an ordinary $V_L^{A_m}$ (and thus $\am$-module).
\end{lemma}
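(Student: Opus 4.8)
The plan is to establish the $\sigma^i$-twisted module structure via the standard construction of twisted modules for lattice vertex algebras, then descend to the fixed-point subalgebra. First I would recall the general theory (following \cite{ll}): for a lattice vertex algebra $V_L$ and an automorphism $\sigma$ of finite order $N$ lifted from an isometry of $L$, the $\sigma$-twisted modules are built on spaces of the form $V_{L+\mu}$ where $\mu$ lies in an appropriate coset determined by the eigenvalue data of $\sigma$ on $\mathfrak{h}$. Here $\sigma = \exp(\pi i \tfrac{\alpha(0)}{pm})$ acts as an \emph{inner} automorphism coming from the Heisenberg zero-mode, so $\sigma^i$ acts on the vertex operators $Y(e^{n\alpha},z)$ by multiplication by a root of unity $e^{\pi i n / m \cdot i} = e^{\pi i n i/m}$. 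The key observation is that the twisting by such an inner automorphism shifts the lattice by a fractional multiple of $\alpha$: one checks that $V_{L+\frac{j - i/m}{2p}\alpha}$ carries the $\sigma^i$-twisted structure precisely because $\sigma^i Y(v,z) = Y(\sigma^i v, e^{?}z)$ is matched by the fractional grading $\langle \alpha, \frac{j-i/m}{2p}\alpha\rangle = j - i/m$, whose fractional part $-i/m$ produces the required monodromy $e^{-2\pi i \cdot i/(2m)}$ on the vertex operators.

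Next I would verify the twisted Jacobi identity. Since $\sigma$ fixes the Heisenberg subalgebra $M(1)$ pointwise (it only rescales the group-algebra part $\mathbb{C}[L]$), the twisting affects only the lattice operators $Y(e^{n\alpha},z)$, and the monodromy these pick up on $V_{L+\frac{j-i/m}{2p}\alpha}$ is exactly $e^{2\pi i \langle n\alpha, (j-i/m)\alpha/(2p)\rangle}$. Matching this against the eigenvalue $e^{\pi i n i/m}$ by which $\sigma^i$ scales $Y(e^{n\alpha},z)$ is the computation that confirms the $\sigma^i$-twisted intertwining relations; the fractional-power branch cuts in $(z-w)^{\langle\alpha,\alpha\rangle/(2p)\cdots}$ are compatible, so the twisted Jacobi identity holds. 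This is the standard verification and I would cite \cite{ll} rather than grind through the $\epsilon$-cocycle bookkeeping.

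For the second statement, that $V_{L+\frac{j-i/m}{2p}\alpha}$ is an \emph{ordinary} (untwisted) $V_L^{A_m}$-module, the point is that $V_L^{A_m}=V_L^\sigma$ consists exactly of the $\sigma$-fixed vectors, which by the eigenvalue computation above act with \emph{integral} monodromy on any $V_{L+\beta\alpha}$. Concretely, for $w \in V_L^\sigma$ the operator $Y(w,z)$ acting on $V_{L+\frac{j-i/m}{2p}\alpha}$ involves only integral powers of $z$ because the fractional contributions from $\sigma^i$-eigenvalues cancel on the fixed subalgebra; hence the twisting disappears and one obtains an ordinary module. Restricting further along the inclusion $\am = \triplet \cap V_L^\sigma \hookrightarrow V_L^{A_m}$ from \eqref{wam} immediately gives the ordinary $\am$-module structure.

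The main obstacle I anticipate is keeping the normalization of the fractional exponents consistent: one must track carefully that the eigenvalue of $\sigma^i$ on $e^{n\alpha}$, namely $e^{\pi i n i/m}$, agrees with the monodromy $e^{2\pi i \langle n\alpha,\frac{j-i/m}{2p}\alpha\rangle}=e^{2\pi i n(j-i/m)}=e^{-2\pi i n i/m}$ only up to the correct sign and factor of two coming from $\langle\alpha,\alpha\rangle=2p$ and from $\sigma$ being a \emph{half}-rotation (the $\pi i$ rather than $2\pi i$ in the exponent). Reconciling these conventions — and confirming that it is genuinely $\sigma^i$ and not $\sigma^{-i}$ that twists $V_{L+\frac{j-i/m}{2p}\alpha}$ — is the delicate point; everything else follows from the general lattice-VOA twisted-module machinery.
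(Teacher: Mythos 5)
The paper offers no proof of this lemma at all --- it is introduced with ``Recall a well-known fact'' --- so the only thing to compare your proposal against is the standard lattice-VOA twisted-module argument, and that is exactly what you outline; your sketch is essentially correct. One computation resolves the ``delicate point'' you leave open at the end: since $\alpha(0)e^{n\alpha}=\langle\alpha,n\alpha\rangle e^{n\alpha}=2pn\,e^{n\alpha}$, the eigenvalue of $\sigma=\exp(\pi i\,\alpha(0)/(pm))$ on $e^{n\alpha}$ is $e^{2\pi i n/m}$, not $e^{\pi i n/m}$ as written in your first paragraph; the factor of two coming from $\langle\alpha,\alpha\rangle=2p$ exactly cancels the ``half-rotation'' $\pi i$ in the exponent, which is also why $\sigma$ has order $m$. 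Consequently $\sigma^{i}$ scales $e^{n\alpha}$ by $e^{2\pi i ni/m}$, and this matches the monodromy $e^{-2\pi i n(j-i/m)}=e^{2\pi i ni/m}$ of $Y(e^{n\alpha},z)$ on $V_{L+\frac{j-i/m}{2p}\alpha}$ on the nose, up to the usual $\sigma^{i}$-versus-$\sigma^{-i}$ convention, which is immaterial here since $i$ ranges over all of $1,\dots,m-1$. With that settled, your second part is fine: on $V_L^{\sigma}=\bigoplus_{n}M(1)\otimes e^{nm\alpha}$ the relevant exponents become $nm(j-i/m)\equiv -ni \in \mathbb{Z}$, so $Y(w,z)$ involves only integral powers of $z$ for $w$ in the fixed subalgebra, giving the ordinary $V_L^{A_m}$-module structure, and restriction along $\am=\triplet\cap V_L^{\sigma}\hookrightarrow V_L^{\sigma}$ finishes the claim.
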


We immidiately get decomposition of $V_{L+\frac{j-\frac{i}{m}}{2p} \alpha}$ into
$\am$-modules: For $k=0,...,m-1$, we let
$$R(i,j,k):=\bigoplus_{s \in \mathbb{Z}} M(1) \otimes e^{\frac{j-\frac{i}{m}}{2p} \alpha+(m s+k) \alpha}$$
From the Feigin-Fuchs classification of modules, we conclude that each
summand appearing in the decomposition is irreducible as Virasoro module.

Thus we get

\begin{corollary} Each $R(i,j,k)$ is an irreducible $\am$-module. All together, twisted $V_L$-modules
yield  $2pm(m-1)$ irreducible $\am$-modules.
\end{corollary}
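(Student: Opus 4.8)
The plan is to verify two things independently: first, that each $R(i,j,k)$ is an irreducible $\am$-module, and second, that the total count of such modules is exactly $2pm(m-1)$. The irreducibility is essentially handed to us by the preceding lemma and the Feigin--Fuchs classification, so the main work is a careful bookkeeping argument for the count, making sure no two of the $R(i,j,k)$ coincide and that the ranges of the indices are counted without double-counting.

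For irreducibility, I would argue as follows. By the Lemma, $V_{L+\frac{j-i/m}{2p}\alpha}$ is an ordinary $V_L^{A_m}$-module, hence an $\am$-module since $\am \subset V_L^{A_m}$. The decomposition into the pieces $R(i,j,k)$ is exactly the eigenspace decomposition with respect to the action of $\sigma$ (equivalently, the grading by $\alpha(0)$-eigenvalue modulo the lattice shift), so each $R(i,j,k)$ is an $\am$-submodule. To see that each summand is irreducible, I would mimic the argument in the proof of Theorem~\ref{irrL}: fix a nonzero vector $v \in R(i,j,k)$; because the ambient module $V_{L+\frac{j-i/m}{2p}\alpha}$ is irreducible as a $V_L$-module (or at least each Virasoro summand is irreducible by Feigin--Fuchs, as the excerpt notes), the vectors $w_n v$ with $w \in V_L^{A_m}$ already fill out the $\sigma$-eigenspace containing $v$; but an eigenvalue/weight argument shows that only the $\am$-part of $V_L^{A_m}$ is needed, so $\mathrm{Span}\{w_n v : n \in \mathbb{Z},\ w \in \am\} = R(i,j,k)$. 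This gives irreducibility.

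For the counting, the index ranges are $i = 1,\dots,m-1$ (the nontrivial twists $\sigma^i$), $j = 0,\dots,2p-1$, and $k = 0,\dots,m-1$. Naively this yields $(m-1)\cdot 2p \cdot m = 2pm(m-1)$ triples. The point that requires care is to confirm these are genuinely pairwise non-isomorphic, i.e. that the naive product count is the correct count with no collisions. I would check this by comparing the exponents $\tfrac{j-i/m}{2p}\alpha + (ms+k)\alpha$ appearing in distinct $R(i,j,k)$: the twist parameter $i$ is recovered from the fractional part $\tfrac{-i/m}{2p}$ of the $\alpha$-charge, the residue $k \bmod m$ together with $j$ is recovered from the coset representative modulo $m\alpha$, and distinct $(i,j,k)$ within the stated ranges give distinct such cosets because $0 \le j \le 2p-1$ and $0 \le k \le m-1$ are chosen precisely as a set of representatives. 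Hence the $2pm(m-1)$ triples parametrize distinct modules.

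The main obstacle I anticipate is the irreducibility step rather than the count: one must be sure that restricting from $V_L^{A_m}$ down to the strongly generated subalgebra $\am$ does not lose the ability to generate each eigenspace from a single vector. The safest route is the weight-space argument imported from Theorem~\ref{irrL}, combined with the fact established earlier (Theorem~\ref{1a}) that $\am$ is strongly generated by $E^{(m)}, F^{(m)}, H, \omega$, whose repeated action connects all the relevant Virasoro summands within a fixed $R(i,j,k)$. Once that connectivity is in place, irreducibility of each Virasoro constituent (Feigin--Fuchs) upgrades to irreducibility of $R(i,j,k)$ as an $\am$-module, and the corollary follows.
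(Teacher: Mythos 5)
Your proposal is correct and follows essentially the same route the paper intends (the paper states the corollary as immediate from the preceding lemma): decompose the $\sigma^i$-twisted module into $\sigma$-eigenspaces $R(i,j,k)$, note each Virasoro constituent $M(1)\otimes e^{\mu}$ is irreducible by Feigin--Fuchs, use the lattice vertex operators $E^{(m)},F^{(m)}$ to connect the constituents, and count the index ranges $(m-1)\cdot 2p\cdot m$. The only slip is the phrase ``irreducible as a $V_L$-module'' for the ambient space, which is a $\sigma^i$-twisted module rather than an ordinary one; your parenthetical fallback (Feigin--Fuchs irreducibility of each summand plus connectivity via the strong generators) is the argument that actually carries the proof, and your coset bookkeeping for the count is the right check (with pairwise non-isomorphism ultimately pinned down by the $(L(0),H(0))$ lowest-weight data of Lemma \ref{parametar}).
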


Next result will identify lowest weight vector in $R(i,j,k)$.

\begin{lemma} \label{parametar}  There  is a unique
  $ \ell \in \Z $ such that
$$  -(m-1) p \le \ell \le (m+1) p -1 \quad \mbox{and} \quad  e ^{ \frac{ \ell - \frac{i}{m}}{2p} \alpha} \in R(i,j,k).$$
Moreover,
 $e ^{ \frac{ \ell - \frac{i}{m}}{2p} \alpha} $ is a lowest weight vector  in $R(i,j,k)$ and
 $$ L(0) e ^{ \frac{ \ell - \frac{i}{m}}{2p} \alpha}  = h_{\ell + 1 - i/m,1} e ^{ \frac{ \ell - \frac{i}{m}}{2p} \alpha}, \
 H(0) e ^{ \frac{ \ell - \frac{i}{m}}{2p} \alpha} = { \ell - \tfrac{i}{m} \choose 2p-1 } e ^{ \frac{ \ell - \frac{i}{m}}{2p} \alpha}. $$
\end{lemma}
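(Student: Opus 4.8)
The plan is to separate the arithmetic bookkeeping (existence and uniqueness of $\ell$) from the two eigenvalue statements, and to obtain both eigenvalues by free-field (Coulomb gas) calculus on the Fock spaces $M(1)\otimes e^{c\alpha}$ making up $R(i,j,k)$. First I would observe that the pure charge vectors $e^{c\alpha}$ lying in $R(i,j,k)$ are exactly those with $c=\frac{j-i/m}{2p}+(ms+k)$, $s\in\Z$; writing $c=\frac{\ell-i/m}{2p}$ this reads $\ell=j+2p(k+ms)$, so that $\ell$ runs through a single residue class $j+2pk\pmod{2pm}$. Since the interval $[-(m-1)p,(m+1)p-1]$ consists of exactly $2pm$ consecutive integers, it meets that residue class exactly once, giving existence and uniqueness of $\ell$ at once.

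Next, for the lowest-weight claim I would compute $L(0)$ on a general charge vector and invoke convexity. With the conformal vector $\omega$ of the excerpt, the standard background-charge formula gives
\[
L(0)\,e^{c\alpha}=\frac{r(r-2p+2)}{4p}\,e^{c\alpha},\qquad r=\langle\alpha,c\alpha\rangle=\ell-\tfrac{i}{m},
\]
which already proves the asserted $L(0)$-eigenvalue, since $\frac{r(r-2p+2)}{4p}=h_{r+1,1}=h_{\ell+1-i/m,1}$. As a function of $r$ this is a convex parabola with vertex $r=p-1$, and on each summand $M(1)\otimes e^{c\alpha}$ the minimal weight is attained by $e^{c\alpha}$ itself (oscillators only raise weight). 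Hence the lowest weight vector of $R(i,j,k)$ is the $e^{c\alpha}$ whose $r$-value is closest to $p-1$. Because the admissible $r$-values are spaced $2pm$ apart and the chosen interval has length $2pm$ essentially centered at $r=p-1$ (up to the shift $-i/m$ with $0<i/m<1$), the unique $\ell$ from the first step is this closest representative. I would make this explicit by comparing with the two neighbours $\ell\pm 2pm$, the strict inequalities coming precisely from $0<i/m<1$.

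The third and hardest point is the $H(0)$-eigenvalue. I would use $H=Qe^{-\alpha}$ with $Q=e^\alpha_0$ and the commutator identity $Y(H,w)=[Q,Y(e^{-\alpha},w)]$, valid because $e^\alpha$ has conformal weight $1$, so $Q$ is a weight-zero screening. The zero mode is $o(H)=H_{2p-2}$, and on $e^{c\alpha}$ only the oscillator-free part survives; using the operator product $Y(e^\alpha,z)Y(e^{-\alpha},w)\sim (z-w)^{-2p}$ and $\langle\alpha,c\alpha\rangle=2pc$ this gives
\[
o(H)\,e^{c\alpha}=\mathrm{Res}_w\, w^{2p-2}\,\mathrm{Res}_z\big(\iota_{z,w}-\iota_{w,z}\big)(z-w)^{-2p}\, z^{2pc}\,w^{-2pc}\,e^{c\alpha}.
\]
Replacing $\big(\iota_{z,w}-\iota_{w,z}\big)(z-w)^{-2p}$ by $\frac{1}{(2p-1)!}\partial_w^{2p-1}\big(w^{-1}\delta(z/w)\big)$ and evaluating the $z$-residue against $z^{2pc}$ collapses the bracket to $\frac{1}{(2p-1)!}\partial_w^{2p-1}w^{2pc}$; the remaining $w$-residue then produces the falling factorial
\[
\frac{(2pc)(2pc-1)\cdots(2pc-2p+2)}{(2p-1)!}=\binom{\ell-i/m}{2p-1}.
\]

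I expect the genuine work to be in this last step: pinning the cocycle and normal-ordering prefactor so that the cross-contraction is exactly $(z-w)^{-2p}$ with unit constant and no spurious sign, and checking that the oscillator factor of the contracted product contributes nothing to the coefficient of the bare vector $e^{c\alpha}$ (so the surviving scalar is genuinely the constant term). Once these normalizations are fixed, the delta-function manipulation above is routine. Finally, I would record that the lowest-weight property established in the second step forces $e^{c\alpha}$ to be annihilated by every weight-lowering mode, so it is a bona fide lowest weight vector of the $\am$-module $R(i,j,k)$, completing the proof.
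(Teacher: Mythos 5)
Your first two steps are correct and complete: the residue-class count mod $2pm$ against an interval of exactly $2pm$ consecutive integers gives existence and uniqueness of $\ell$; the formula $L(0)e^{c\alpha}=\bigl(pc^2-(p-1)c\bigr)e^{c\alpha}=h_{r+1,1}e^{c\alpha}$ with $r=2pc=\ell-i/m$ follows from the shifted conformal vector; and the convexity argument, with the strict inequality $|r-(p-1)|<mp$ coming from $0<i/m<1$, correctly identifies the chosen summand as the one of minimal conformal weight, whence $e^{c\alpha}$ is a lowest weight vector. (The paper itself states the lemma without proof, so there is nothing to compare against there; your argument is the natural one.)

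The $H(0)$ computation, however, has a genuine gap, and it is not the one you flagged (cocycles and normal-ordering constants). The identity $Y(H,w)=[Q,Y(e^{-\alpha},w)]$ with $Q=\mathrm{Res}_zY(e^{\alpha},z)$ is an instance of the untwisted iterate/commutator formula, and it fails on $R(i,j,k)\subset V_{L+\frac{j-i/m}{2p}\alpha}$: there the operator $Y(e^{\alpha},z)$ has $z$-exponents in $-\tfrac{i}{m}+\Z$, so $\mathrm{Res}_z$ of it is literally zero, and your own display confirms the problem --- $\mathrm{Res}_z z^{2pc}\,w^{-1}\delta(z/w)$ picks the coefficient of $z^{-1}$ in a series supported on $z^{2pc+\Z}$ with $2pc=\ell-i/m\notin\Z$, which vanishes rather than producing $w^{2pc}$. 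Done honestly, your manipulation yields $0$, not $\binom{\ell-i/m}{2p-1}$; restoring the correct answer through $e^{\pm\alpha}$ requires the twisted Jacobi identity with the extra $\bigl(\tfrac{z_1-z_0}{z_2}\bigr)^{-i\langle\alpha,\alpha\rangle/(2pm)}$-type factors that re-integralize the exponents, which is exactly the work you omit. The clean repair is to avoid $e^{\pm\alpha}$ altogether: $H=Qe^{-\alpha}$ is the coefficient of $z^{2p-1}$ in $E^{-}(-\alpha,z)\mathbf{1}$, hence an explicit element of $M(1)$, and $M(1)\otimes e^{c\alpha}$ is an ordinary untwisted $M(1)$-module; the generating function of the zero modes of these Schur-type vectors on a highest weight vector with $\alpha(0)$-eigenvalue $t$ is $(1+z)^{t}$, giving $o(H)v=\binom{t}{2p-1}v$ directly. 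This is the computation from \cite{a} and \cite{am2} that the paper implicitly relies on (note the $(1+z_i)^t$ factors in Conjecture \ref{conj-const}), and it sidesteps the twisting issue entirely.
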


\subsection{Irreducible $\am$-modules: lowest weights}

Irreducible modules constructed above  are of lowest weight type with respect to $(L(0), H(0) )$.
The list of the corresponding lowest weights can be found in the following tables:

\vskip 5mm

Let $m = 2k$.
\vskip 3mm
\begin{center}
  \begin{tabular}{|c|c|c|c}
    \hline
     \mbox{module}  $M$ & \mbox{lowest weights} & $ \dim M(0) $ \\ \hline \hskip 2mm
   $\Lambda(i) _0 $ & $  ( h_{i,1}, 0)  $ & $1 $ \\ \hline
  $ \Lambda(i)_j ^ +  $  & $ ( h_{i, 2 j +1}, { - 2 j p-1 + i \choose 2p-1} ) $ & $1$ \\
   \hline
   $ \Lambda(i)_j ^ -  $  & $ ( h_{i, 2 j +1}, -{ - 2 j p-1 + i \choose 2p-1} ) $ & $1$ \\
   \hline
   $ \Lambda(i)_m  $  & $ ( h_{i, 2 k +1}, { - 2 k p-1 + i \choose 2p-1} ) $ & $2$ \\
   \hline
 $ \Pi(i)_j  ^+ $ & $ ( h_{p+i,2 j+1}, {-(2j -1) p-1 + i \choose 2 p -1} ) $ & $1$  \\ \hline
     $\Pi(i)_j  ^- $ & $ ( h_{p+i,2 j +1} , - {-(2j - 1) p-1 + i \choose 2 p -1} ) $& $1 $ \\ \hline
    $R( i, j,k)$  &  $( h_{\ell + 1 - i/m,1}, { \ell - \tfrac{i}{m} \choose 2p-1 } )$& $1$ \\
    \hline
  \end{tabular}
\end{center}

\vskip 3mm

The set of lowest conformal weights is  \bea S_m &:=&  \{ h_{i, 2j+1} \ \vert \  i = 1, \dots, p, j= 0, \dots, k \}  \cup \{ h_{p+i, 2j+1} \ \vert \  i=1, \dots, p, \ j= 1, \dots, k  \}  \nonumber \\
&& \cup \ \{  h_{\ell + 1 - i/m,1}  \ \vert \   p \le \ell \le (m+1) p -1, \ \ 1 \le i \le m-1 \} \nonumber \} \eea

\noindent Let $m = 2k+1$.

\vskip 3mm
 \begin{center}
 \begin{tabular}{|c|c|c|c}
    \hline
     \mbox{module}  $M$ & \mbox{lowest weights} & $ \dim M(0) $ \\ \hline \hskip 2mm
   $\Lambda(i) _0 $ & $  ( h_{i,1}, 0)  $ & $1 $ \\ \hline
  $ \Lambda(i)_j ^ +  $  & $ ( h_{i, 2 j +1}, { - 2 j p-1 + i \choose 2p-1} ) $ & $1$ \\
   \hline
   $ \Lambda(i)_j ^ -  $  & $ ( h_{i, 2 j +1}, -{ - 2 j p-1 + i \choose 2p-1} ) $ & $1$ \\
   \hline
   $ \Pi(i)_m  $  & $ ( h_{p+i, 2 k +3}, -{ - (2 k+1) p-1 + i \choose 2p-1} ) $ & $2$ \\
   \hline
 $ \Pi(i)_j  ^+ $ & $ ( h_{p+i,2 j+1}, {-(2j -1) p-1 + i \choose 2 p -1} ) $ & $1$  \\ \hline
     $\Pi(i)_j  ^- $ & $ ( h_{p+i,2 j+1} , - {-(2j - 1) p-1 + i \choose 2 p -1} ) $& $1 $ \\ \hline
    $R( i, j,k)$  & $ ( h_{\ell + 1 - i/m,1}, { \ell - \tfrac{i}{m} \choose 2p-1 } )$ & $1$ \\
    \hline
  \end{tabular}
  \end{center}
\vskip 3mm

The set of lowest conformal weights is  \bea S_m &:=&  \{ h_{i, 2j+1} \ \vert \  i = 1, \dots, p, j= 0, \dots, k-1 \}  \nonumber \\
&& \cup \{ h_{p+i, 2j +1} \ \vert \  i=1, \dots, p, \ j= 1, \dots, k+1  \}  \nonumber \\
&& \cup \ \{  h_{\ell + 1 - i/m,1}  \ \vert \   p \le \ell \le (m+1) p -1, \ \ 1 \le i \le m-1\} \nonumber \} \eea

Number $\ell$ is defined as in Lemma \ref{parametar}.

\begin{theorem} \label{parametri}
 The $\Lambda, \Pi$ and $R$ families contain $ 2 m ^2 p$ non-isomorphic irreducible $\am$--modules. Lowest weights of irreducible modules are
 $$ (x,y) \in {\Bbb C} ^2  \ \  \mbox{such that} \ x \in S_m, \ y ^2 = C_p P(x). $$
The set of lowest conformal weights $S_m$ has $(m^2 + 1)p$ elements.
\end{theorem}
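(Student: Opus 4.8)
The plan is to read the statement as the bookkeeping summary of the three constructions of Section~\ref{konstrukcija-modula}, establishing in turn the count $2pm^2$, the curve relation $y^2=C_pP(x)$, and the cardinality $|S_m|=(m^2+1)p$, and to extract pairwise non-isomorphism from the explicit lowest weights. First I would assemble the count: by Theorem~\ref{irrL} with Propositions~\ref{lambda-even} and \ref{lambda-odd} the $\Lambda$-series contributes $pm$ non-isomorphic irreducibles; the discussion following Propositions~\ref{pi-even} and \ref{pi-odd} gives another $pm$ from the $\Pi$-series; and the corollary on the twisted series gives $2pm(m-1)$ from the $R$-series. Hence the total is $pm+pm+2pm(m-1)=2pm^2$. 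Since each of these modules is of lowest-weight type for $(L(0),H(0))$, it then suffices to separate them by their lowest weights.

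Next I would record all lowest weights in one uniform form. Recalling $h_{r,s}=\frac{(sp-r)^2-(p-1)^2}{4p}$, so that $h_{r,s}$ depends only on $|sp-r|$ and $h_{r,s}=h_{r-(s-1)p,1}$, the tables and Lemma~\ref{parametar} show that every module has lowest weight
$$ x=h_{t+1,1}=\frac{t(t-2p+2)}{4p},\qquad y=\binom{t}{2p-1}, $$
for a suitable parameter $t$: one has $t=i-1-2jp$ in the $\Lambda$-series (with $t=i-1$, whence $y=0$, for $\Lambda(i)_0$), $t=i-1-(2j-1)p$ in the $\Pi$-series, and $t=\ell-\tfrac{i}{m}$ in the $R$-series. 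Because $C_p=(4p)^{2p-1}/(2p-1)!^2$, the relation of $\mathcal{P}(\overline{M(1)})$ from Lemma~\ref{3a} lifts, on the top level, to $y^2=C_pP(x)$ with $P(x)=\prod_{r=1}^{2p-1}(x-h_{r,1})$ monic of degree $2p-1$ and the same leading term as $x^{2p-1}$. In the coordinate $t$ this is nothing but
$$ \binom{t}{2p-1}^2=\frac{1}{(2p-1)!^2}\prod_{l=0}^{2p-2}(t-l)^2, $$
once one checks that the two-to-one map $t\mapsto x$ sends $\{0,\dots,2p-2\}$ onto the roots $h_{r,1}$ and matches leading coefficients; thus all lowest weights lie on the stated curve.

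Then I would count $S_m$ and conclude. Since weights collide exactly when the corresponding $|sp-r|$ agree, I would translate the defining sets: the $\Lambda$-weights give the integers $sp-r=(2j+1)p-i$, filling the even blocks $[2jp,(2j+1)p-1]$, and the $\Pi$-weights give $sp-r=2jp-i$, filling the odd blocks $[(2j-1)p,2jp-1]$; over the ranges of Propositions~\ref{lambda-even}--\ref{pi-odd} these tile $\{0,1,\dots,(m+1)p-1\}$, contributing $(m+1)p$ distinct values. The $R$-weights give the non-integers $|sp-r|=\ell+1-p-\tfrac{i}{m}=N/m$ with $1\le N\le m^2p-1$ and $m\nmid N$, hence $mp(m-1)$ further distinct values, disjoint from the integers. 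Therefore $|S_m|=(m+1)p+mp(m-1)=(m^2+1)p$. The block/fraction dichotomy forbids collisions across the families, while within a family the sign of $y$ (and Lemma~\ref{parametar} in the $R$-case) separates the remaining modules, giving pairwise non-isomorphism.

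The main obstacle is precisely this block bookkeeping, whose details flip with the parity of $m$: in each case the top two blocks $[(m-1)p,mp-1]$ and $[mp,(m+1)p-1]$ are filled, one from the $\Lambda$-series and one from the $\Pi$-series, with the roles (and the location of the two-dimensional top component) exchanged between $m=2k$ and $m=2k+1$. One must also verify that the parametrization $(i,j,k)\mapsto(x,y)$ of the $R$-series is injective, so that each fractional value of $|sp-r|$ supports exactly the two modules with $y$ and $-y$. By contrast the relation $y^2=C_pP(x)$, seemingly the analytic core, collapses to the definition of $\binom{t}{2p-1}$ once the weights are written in the uniform coordinates above.
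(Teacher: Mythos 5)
Your proposal is correct and follows essentially the same route as the paper, which states Theorem \ref{parametri} without a separate proof as the bookkeeping summary of Theorem \ref{irrL}, Propositions \ref{lambda-even}--\ref{pi-odd}, Lemma \ref{parametar}, the corollary on twisted modules and the two lowest-weight tables; your uniform parametrization $x=h_{t+1,1}$, $y=\binom{t}{2p-1}$ together with the identity $\binom{t}{2p-1}^2=C_p\prod_{r=1}^{2p-1}(x-h_{r,1})$ merely makes explicit the curve relation that the paper leaves implicit. One remark: your block tiling yields the $(m+1)p$ integer values of $|sp-r|$ only if the first set in the displayed $S_m$ for $m=2k+1$ runs over $j=0,\dots,k$ rather than the printed $j=0,\dots,k-1$ (the weights $h_{i,2k+1}$ of $\Lambda(i)_k^{\pm}$ do occur and are not absorbed into the $\Pi$-set), so your count in fact corrects a typo in the paper rather than conflicting with it.
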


\begin{conjecture} \label{conj-1}

The $\Lambda, \Pi$ and $R$ families of $\am$--modules provides a complete list of irreducible $\am$--modules. In particular, the vertex operator algebra has $2 m ^2 p$ irreducible modules.
\end{conjecture}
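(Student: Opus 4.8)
The plan is to prove the three assertions in turn—the count $2m^2p$, the description of the lowest weights as the points of $y^2=C_pP(x)$ with $x\in S_m$, and the equality $\vert S_m\vert=(m^2+1)p$—and to deduce non-isomorphism from the third. First I would total the families: by Propositions~\ref{lambda-even} and~\ref{lambda-odd} the $\Lambda$-family ($\Lambda(i)_0$, the pairs $\Lambda(i)^\pm_j$, and $\Lambda(i)_m$ in the even case) has $mp$ members; by Propositions~\ref{pi-even} and~\ref{pi-odd} the $\Pi$-family has $mp$ members ($\Pi(i)_m$ being the extra module in the odd case); and the corollary before Lemma~\ref{parametar} gives $2pm(m-1)$ modules of $R$-type. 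Thus $mp+mp+2pm(m-1)=2m^2p$. Since each is an irreducible module of lowest-weight type for the commuting pair $(L(0),H(0))$, it is determined by the action of these operators on its top, so non-isomorphism amounts to showing these top data are pairwise distinct.

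Next I would read the lowest-weight data off the tables of the preceding subsection and check that each weight lies on $y^2=C_pP(x)$. In every row both eigenvalues are governed by a single parameter $a$—the base charge of the defining vector $Q^se^{t\alpha}$, so $a=2pt$; for the $R$-series $a=\ell-\tfrac{i}{m}$ by Lemma~\ref{parametar}—via $x=\tfrac{a(a-2p+2)}{4p}$ and $y=\pm\binom{a}{2p-1}$. The relation is then a single identity in $a$: pairing the factors of $\binom{a}{2p-1}$ as $(a-r)(a-(2p-2-r))=4px+r(2p-2-r)$ and using $(a-(p-1))^2=4px+(p-1)^2$ gives
\be
\binom{a}{2p-1}^2=\frac{1}{(2p-1)!^2}\bigl(4px+(p-1)^2\bigr)\prod_{r=0}^{p-2}\bigl(4px+r(2p-2-r)\bigr)^2=C_pP(x),
\ee
because $C_p=\tfrac{(4p)^{2p-1}}{(2p-1)!^2}$. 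The reflection $a\mapsto 2p-2-a$ fixes $x$ and (as $2p-1$ is odd) negates $y$, which both yields the two points $(x,\pm y)$ of the curve and accounts for the $\pm$ pairing of modules; conversely every point with $x\in S_m$ is attained, so the lowest weights are exactly as stated.

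Finally I would count $S_m$, which I expect to be the crux. Writing $S_m=A\cup B\cup C$ for the $\Lambda$-, $\Pi$- and $R$-contributions and using that $h(a)=h(a')$ iff $a'=a$ or $a'=2p-2-a$, both internal distinctness and mutual disjointness reduce to showing that no two relevant $a$-values coincide or reflect into one another. The governing facts are: the $R$-parameters $a=\ell-\tfrac{i}{m}$ are non-integral, so $C$ is disjoint from the integral sets $A,B$; the residue of $a$ modulo $2p$ lies in $[0,p-1]$ for $A$ and in $[p,2p-1]$ for $B$, separating the two; and the positivity constraints on the indices (e.g. $\ell\ge p$, $j\ge 1$, $i+i'\le 2p$) render $a+a'=2p-2$ unsolvable in each mixed case. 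This yields $\vert A\vert=p(k+1)$ and $\vert B\vert=pk$ for even $m$, $\vert A\vert=\vert B\vert=p(k+1)$ for odd $m$, and $\vert C\vert=pm(m-1)$, summing to $(m^2+1)p$ in both parities. The same distinctness completes the non-isomorphism claim: the $2m^2p$ modules group by their top into $(m^2-1)p$ conjugate pairs, the $p$ modules $\Lambda(i)_0$ with $y=0$, and the $p$ modules with two-dimensional top carrying both signs $(x,\pm y)$; the $x$-values of these clusters are exactly the $(m^2+1)p$ points of $S_m$, so modules in different clusters differ in $x$ and the two modules inside a conjugate pair differ in the sign of $y$.
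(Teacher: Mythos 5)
There is a fundamental gap: the statement you are asked to prove is a \emph{conjecture} in the paper, and the paper itself does not prove it. Its content is the \emph{completeness} claim --- that every irreducible $\am$--module is isomorphic to one from the $\Lambda$, $\Pi$, $R$ families --- and your proposal never addresses this. What you actually outline (the count of the constructed modules, the description of their lowest weights as points of $y^2 = C_p P(x)$ with $x \in S_m$, the cardinality $|S_m| = (m^2+1)p$, and pairwise non-isomorphism via distinctness of the top data) is precisely the content of Theorem \ref{parametri}, which the paper already establishes and which only gives a \emph{lower bound} of $2m^2p$ on the number of irreducibles. Nothing in your argument rules out the existence of further irreducible $\am$--modules outside these three families.

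To close that gap one needs an upper bound, i.e.\ control of Zhu's algebra $A(\am)$ (or an equivalent classification device): one must show that the candidate lowest-weight data exhaust the irreducible $A(\am)$--modules. The paper attempts exactly this in Section \ref{case-m2}, but only for $m=2$, and even there the argument is conditional: it relies on the unproven constant-term identity of Conjecture \ref{conj-const}, on the relative primality of the polynomials $r(x)$ and $s(x)$ (verified by computer only for $p \le 10$), and on the conjectural structure of $A(\triplet^{A_2})$ in Conjecture \ref{conj-2}. The modular-invariance result of Theorem \ref{modular} is likewise only supporting evidence, not a proof. So your proposal, while internally reasonable as a re-derivation of Theorem \ref{parametri}, does not prove the stated conjecture, and indeed no complete proof is currently available.
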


\section{Modules for $\triplet ^{A_2} $ }

\label{case-m2}

 In this section we shall give some evidence for Conjecture \ref{conj-1}. For simplicity we shall study the case $m=2$, but similar analysis can be made for general $m$.

Recall that $\triplet$ is realized as a vertex subalgebra of the lattice vertex algebra $V_{L}$, where
$$ L= \Z \alpha, \quad \langle \alpha, \alpha \rangle = 2p.$$
As a vertex algebra $\triplet  ^{{A}_2 } $ is generated by

$$ F ^{(2)} = e ^{-2 \alpha}, \ \ E ^{(2)} = Q ^4 e ^{-2 \alpha}, \ H = Q e ^{-\alpha}, \ \omega, $$
and it is realized as a vertex subalgebra of the lattice vertex algebra $V_{\Z (2 \alpha)}. $

Now we shall recall  construction $ 8 p$--irreducible modules for $\triplet ^{A_2 } $ from Section \ref{konstrukcija-modula}. First we shall start with $\triplet$--modules:
$$ \Lambda(1), \dots, \Lambda(p), \Pi(1), \dots, \Pi(p).$$

Recall that these modules can be constructed as follows:
$$ \Lambda(i) =\triplet.  e ^{ \tfrac{i-1}{2p} \alpha }, \ i=1, \dots, p; $$
$$ \Pi(i) = \triplet. e^{\tfrac{ -p-1 +i }{2p} \alpha }, i= 1, \dots, p. $$
These modules are ${\Bbb Z}_2$--graded and admits the following decomposition into $\triplet ^{{A  }_2 } $--modules:
 $$  \Lambda(i) = \Lambda(i)_0  \bigoplus \Lambda(i) _2 ,  \ \quad \Pi(i) = \Pi(i)_1 ^+ \bigoplus \Pi(i)_1 ^- ,  $$
 where
 $$ \Lambda(i)_0 = \triplet ^{{A}_2 } . e ^{\tfrac{i-1}{2p} \alpha},  \quad    \Lambda(i)_2 = \triplet ^{{A}_2 } . e ^{\tfrac{i-1}{2p} \alpha- \alpha},$$
 and
 $$ \Pi(i)_1 ^+ =  \triplet ^{{A}_2 } . e^{\tfrac{ -p-1 +i }{2p} \alpha }, \quad \Pi(i)_1 ^- =  \triplet ^{{A}_2 } . Q e^{\tfrac{ -p-1 +i }{2p} \alpha }. $$

 Next we have modules:
  $$ R(3p -j) =  \triplet ^{{A}_2 } .  e ^{\tfrac{j-1/2}{2p} \alpha}, \ \ j =-p, \dots, 3 p -1. $$
  Using parametrization from Section \ref{konstrukcija-modula} we get

  $$ R(3p -j) = \left\{ \begin{array}{cc}
                         R (1, 2p+j,1) &  \mbox{if} \ -p \le j \le -1 \\
                         R(1, j, 0) &  \mbox{if} \ 0 \le j \le 2p-1  \\
                         R (1, j-2p,1)  &  \mbox{if} \ 2p \le j \le 3p-1
                       \end{array}
                       \right.
                       $$
 It is easy to see that all the modules above are irreducible and inequivalent. Moreover, these modules are of lowest  weight type with respect to $(L(0), H(0) )$.
The list of the corresponding lowest weights can be found in the following table (compared to the previous tables here we used slightly
different $h$-parametrization):

\vskip 5mm
\begin{center}
  \begin{tabular}{|c|c|c|c}
    \hline
     \mbox{module}  $M$ & \mbox{lowest weights} & $ \dim M(0) $ \\ \hline \hskip 2mm
   $\Lambda(i) _0 $ & $  ( h_{i,1}, 0)  $ & $1 $ \\ \hline
  $ \Lambda(i) _2 $  & $ ( h_{i,3}, { - 2p-1 + i \choose 2p-1} ) $ & $2$ \\ \hline
   $ \Pi(i)_1 ^+ $ & $ ( h_{3p-i,1}, {-p-1 + i \choose 2 p -1} ) $ & $1$  \\ \hline
     $\Pi(i)_1 ^- $ & $ ( h_{3p-i,1} , - {-p-1 + i \choose 2 p -1} ) $& $1 $ \\ \hline
    $R(  j)$  & $( h_{3 p +1/2 -j,1},  { 3p -1/2 -j  \choose 2p-1} ) $ & $1$ \\
    \hline
  \end{tabular}
\end{center}

\vskip 3mm

Conjecture \ref{conj-1} says that
the set
$$\{ \Lambda(i)_0, \Lambda(i)_2, \Pi ^{\pm} (i), R(j), \quad 1 \le i \le p, \ 1 \le j \le 4p \}$$
is  a complete list of irreducible $\triplet ^{A_2 } $--modules.

\subsection{ Zhu's algebra for $\triplet ^{A_2 } $  }

\vskip 10mm

We shall present certain results and conjectures for Zhu's algebra for $\triplet ^{A_2 } $. We believe that similar results hold for general $m$. But in the case $m=2$ it is easier to verify these conjectures by using computational software (e.g. Mathematica/Maple).
We shall apply methods developed in \cite{am1}, \cite{am2}. We   omit some details which are explained in our previous papers.

 Zhu's algebra $A(\triplet ^{A_2 })$  is generated by  $$[E ^{(2)}], [F ^{(2)}], [H], [\omega]. $$

Set $H ^{(2)} = Q ^2 e ^{-2 \alpha}$ and
consider
$$E^{(2)} \circ F^{(2)} \in \overline{M(1)}.$$

 We have

 \bea
  0 &=& Q ^{4} ( F ^{(2)}  \circ F ^{(2)} ) =   E^{(2)} \circ F^{(2)} + F^{(2)} \circ E^{(2)}  \nonumber \\
 && +  4 ( Q ^3 e ^{-2 \alpha} \circ Q e ^{-2 \alpha} + Q e ^{-2 \alpha} \circ Q ^3 e ^{-2 \alpha} )  + 6 H ^{(2)} \circ H ^{(2)} \nonumber \\
 &=&  E^{(2)} \circ F^{(2)} + F^{(2)} \circ E^{(2)}  + 6 H ^{(2)} \circ H ^{(2)}  - 4  H ^{(2)} \circ H ^{(2)} \nonumber \\
 && + 4 Q (  ( Q ^2 e ^{-2 \alpha} \circ Q e ^{-2 \alpha} + Q e ^{-2 \alpha} \circ Q ^2 e ^{-2 \alpha} ) \nonumber \\
 &=& E^{(2)} \circ F^{(2)} + F^{(2)} \circ E^{(2)}  + 2 H ^{(2)} \circ H ^{(2)}   \nonumber \\
 && + 4 Q ^2 ( Q e ^{-2 \alpha} \circ Q e ^{-2 \alpha}). \nonumber
 \eea

\begin{lemma} \label{5.1}
$$Q ^2 ( Q e ^{-2 \alpha} \circ Q e ^{-2 \alpha}) \in O(\overline{M(1)} ). $$
\end{lemma}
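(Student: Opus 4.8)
The plan is to show that the vector $Q^2(Qe^{-2\alpha}\circ Qe^{-2\alpha})$ lies in $O(\overline{M(1)})$ by first locating it more precisely inside the Zhu-algebra grading and then exploiting the one-dimensionality of the relevant graded pieces of $\mathcal{P}(\overline{M(1)})$. Recall that $H=Qe^{-\alpha}$ has conformal weight $h_{1,3}=3p-2$, while here we are dealing with the weight-$(3,3)$-type vector $Qe^{-2\alpha}$, whose conformal weight is $h_{1,5}$. The circle product $u\circ v = \mathrm{Res}_z\frac{Y(u,z)v(1+z)^{\mathrm{wt}\,u}}{z^2}$ lands in $\overline{M(1)}$ since $Qe^{-2\alpha}\in \overline{M(1)}$ and $\overline{M(1)}=\triplet\cap M(1)$ is closed under the relevant operations; applying $Q^2$ keeps us inside $\overline{M(1)}$ because $Q=e$ is a derivation. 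So the first step is purely bookkeeping: determine the conformal weight of $w:=Q^2(Qe^{-2\alpha}\circ Qe^{-2\alpha})$.

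Next I would invoke Lemma \ref{3a}, which gives a complete description of $\mathcal{P}(\overline{M(1)})\cong\mathbb{C}[x,y]/(y^2-C_p x^{2p-1})$ with $\overline{\omega}\mapsto x$ (weight $2$) and $\overline{H}\mapsto y$ (weight $2p-1$). The key structural fact is that this graded algebra has at most one-dimensional homogeneous components in each relevant degree: monomials $x^a$ and $x^b y$ exhaust the basis, and $y^2$ is reducible. The strategy is then to compare $w$ against the (at most two) candidate monomials of the same conformal weight. Since the circle product and $O(\overline{M(1)})$ interact with the Zhu-algebra structure so that $A(\overline{M(1)})$ is a \emph{commutative} quotient controlled by $\mathcal{P}(\overline{M(1)})$, an element is in $O(\overline{M(1)})$ precisely when its image in the associated graded Poisson algebra vanishes. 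Thus I would reduce the claim to showing that the image of $w$ in $\mathcal{P}(\overline{M(1)})$ is zero.

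The main computational input is then a residue/constant-term calculation entirely analogous to the one carried out in Lemma \ref{4a}: one writes $Y(Qe^{-2\alpha},z)Qe^{-2\alpha}$ using the standard lattice vertex-operator formula, with $E^{\pm}$ factors and a Vandermonde-type factor $\Delta^{2p}$ coming from the $Q^2$ screenings, and extracts the appropriate residue. The point is that applying the extra $Q^2$ and projecting to $M(1)/C_2(M(1))=\mathbb{C}[\beta]$ produces a Laurent polynomial in $z$ whose relevant coefficient is again governed by a Morris-type constant-term integral. I would show this coefficient vanishes, which forces $\overline{w}=0$ in $\mathcal{P}(M(1))$, and hence, via the injection $\varphi:\mathcal{P}(\overline{M(1)})\hookrightarrow\mathcal{P}(M(1))$ from Lemma \ref{3a}, that $\overline{w}=0$ in $\mathcal{P}(\overline{M(1)})$ as well.

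The hard part will be the constant-term evaluation: unlike Lemma \ref{4a}, where the nonvanishing of a single Morris-type coefficient drove the argument, here I need the \emph{vanishing} of the analogous quantity, which may require either a symmetry/parity argument (the two cross terms $Q^3e^{-2\alpha}\circ Qe^{-2\alpha}$ and $Qe^{-2\alpha}\circ Q^3e^{-2\alpha}$ were already combined in the displayed computation, suggesting an antisymmetry that cancels) or a direct identity. I would first look for a structural reason — e.g. that $w$ has odd total $\alpha$-charge or sits in a degree where $\mathcal{P}(\overline{M(1)})$ is spanned by a monomial that is independently known to be exact — before resorting to the explicit Morris integral. In practice the cleanest route is likely the degree-counting argument: identify $\mathrm{wt}\,w$ and check it does not match the degree of any surviving monomial $x^a$ or $x^b y$ in $\mathcal{P}(\overline{M(1)})$, in which case vanishing in the Poisson algebra is automatic and no constant-term identity is needed at all.
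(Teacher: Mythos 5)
Your central reduction is incorrect: you assert that an element lies in $O(\overline{M(1)})$ precisely when its image in the Poisson algebra $\mathcal{P}(\overline{M(1)})=\overline{M(1)}/C_2(\overline{M(1)})$ vanishes. The subspaces $O(V)$ and $C_2(V)$ give different quotients; what is true is only that $\mathcal{P}(V)$ surjects onto the associated graded of Zhu's algebra $A(V)=V/O(V)$ with respect to the weight filtration. So vanishing of the top symbol in $\mathcal{P}(M(1))$ would only tell you that $[w]$ drops in filtration degree in $A(\overline{M(1)})$, not that $[w]=0$. Consequently the Morris-type constant-term computation you propose, even if it produced the vanishing you hope for, would not prove the lemma; and your fallback degree count also fails, since the top weight of $w=Q^2(Qe^{-2\alpha}\circ Qe^{-2\alpha})$ is $12p-3=2(5p-1)+(2p-1)$, which does match the degree of the surviving monomial $x^{5p-1}y$. (A smaller slip: $Qe^{-2\alpha}$ is not in $\overline{M(1)}$, as it has $\alpha$-charge $-1$, although $w$ itself does land in $\overline{M(1)}$.)

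The paper's argument is representation-theoretic and works directly in Zhu's algebra. One first writes $[w]=F([\omega])*[H^{(2)}]$ in $A(\overline{M(1)})$ for some polynomial $F$ with $\deg F\le 3p-1$, where $H^{(2)}=Q^2e^{-2\alpha}$. On the other hand, the identity $0=Q^4(F^{(2)}\circ F^{(2)})$ displayed just before the lemma exhibits $w$ as a combination of $\circ$-products of elements of $\triplet^{A_2}$, so $w\in O(\triplet^{A_2})$ and hence $w(0)$ annihilates the lowest component of every $\triplet^{A_2}$-module. Evaluating on the $3p$ modules $\Lambda(i)_2$ and $R(j)$, on whose lowest components $H^{(2)}(0)$ acts nontrivially, produces $3p$ distinct zeros $h_{i,3}$ and $h_{3p+1/2-j,1}$ of $F$, forcing $F=0$ and hence $[w]=0$ in $A(\overline{M(1)})$. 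No constant-term identity is needed. Your proposal misses both essential ingredients: the a priori membership $w\in O(\triplet^{A_2})$, and the evaluation of $F$ at the lowest conformal weights of the known modules.
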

\begin{proof}
One can easily see that
$$[Q ^2 ( Q e ^{-2 \alpha} \circ Q e ^{-2 \alpha})] = F([\omega]) * [H^{(2)}]$$
for certain polynomial $ \deg F \le 3 p -1 $. But by construction $$ Q ^2 ( Q e ^{-2 \alpha} \circ Q e ^{-2 \alpha}) \in O (\triplet ^{A_2}),$$
hence it should act trivially on the lowest components of $\triplet ^{A_2}$-modules. So we should choose $\triplet ^{A_2}$--modules on whose lowest  components $H ^{(2)} (0)$ does not act trivially. In particular we have:
$$ F( h_{i,3}) = 0,  F(h_{3p+1/2 -j,1}) =0, \quad i = 1, \dots, p, \ j = 1, \dots, 2p. $$
Therefore, we have constructed $3p$ zeros of polynomial $F$. This forces $F=0$. The proof follows.
\end{proof}
 Lemma \ref{5.1} implies:

$$ E^{(2)} \circ F^{(2)} + F^{(2)} \circ E^{(2)} = - 2 H ^{(2)} \circ H ^{(2)}  -4 Q ^2 ( Q e ^{-2 \alpha} \circ Q e ^{-2 \alpha})  \in O(\overline{M(1)}). $$
Next we notice;
$$ E ^{(2) }  \circ F ^{(2)}  - F ^{(2)} \circ E ^{(2)} \in U(Vir) Q e ^{-\alpha} + U(Vir) Q ^3 e ^{-3 \alpha} .$$

Then by using the structure of Zhu's algebra for $\overline{M(1)}$  \cite{am2},
we get
\bea \label{ev-1} &&   [E ^{(2)} \circ F ^{(2)} ] = [H] * f_1([\omega])  \in A(\overline{M(1)}) , \quad \mbox{for certain} \ \ f_1 \in {\Bbb C}[x], \eea
and $$ [ E ^{(2)} \circ F ^{(2)} ] =   0 \in A(\triplet ^{A_2}) . $$  Evaluation of relation (\ref{ev-1}) on the lowest components of $\triplet ^{A_2}$--modules yields

\bea \label{rel-1} && [H] * \prod_{ i =1} ^p ([\omega]- h_{3p - i,1}) )  ([\omega]- h_{i,3}) \prod_{j=1} ^{2 p } ([\omega]- h_{3 p +1/2 -j,1}) * s([\omega]) = 0 \eea
where $s(x)$ is a certain polynomial of degree (at most) $p-1$.

Next we notice that $\Psi  (E ^{(2)} * F ^{(2)} - F ^{(2)} * E ^{(2)} )= -  (E ^{(2)} * F ^{(2)} - F ^{(2)} * E ^{(2)})$ which implies that
$$ E ^{(2)} * F ^{(2)} - F ^{(2)} * E ^{(2)}  \in U(Vir) Q e ^{-\alpha}. $$
By using the fact that $E ^{(2)}(0)$ and $F ^{(2)} (0)$ commute on the lowest components of modules $ \Pi(i)_1 ^{\pm}$ and $R(j)$ we get
 the following formula:
\bea \label{rel-2}
&& [[E ^{(2)}], [F ^{(2)}] ] =[E^{(2)}] * [F^{(2)}]-[F^{(2)}]*[E^{(2)}] \\
&&  =  [H] *  \prod_{ i =1} ^p ([\omega]- h_{3p - i,1}) )    \prod_{j=1} ^{2 p } ([\omega]- h_{3 p +1/2 -j,1}) * r ([\omega])  \nonumber \eea
for certain polynomial $r(x)$ of degree (at most) $2 p-2$.

\begin{remark}
Since $E ^{(2)}(0)$,  $F ^{(2)} (0)$ and $H(0)$ acts non-trivially on $\triplet ^{A_2}$--modules $\Lambda(i) _2$, we get that polynomial $r(x)$ is non-trivial.
\end{remark}

Since $$ [H \circ F ^{(2)} ] =  f_2([\omega]) * [F ^{(2)} ] $$ for certain polynomial $f_2$, $\deg f_2 = p$, by evaluating this relation on the lowest components of modules $\Lambda(i) ^{-}$,  we get that
$$f_2(x) = K \ell (x), \quad \ell(x)  = \prod_{i=1} ^{p} (x - h_{i,3}). $$
Non-triviality of the constant $K$ can be obtained by the following constant term identity:

\begin{conjecture} \label{conj-const}
Let $u= e^{-\alpha} \circ Q ^3 e ^{-2\alpha} \in \overline{M(1)}$. Then $u(0)$ acts on the highest weight vector $v_{\lambda}$ of the $M(1)$--module $M(1, \lambda)$ as follows
\bea  u(0) v_{\lambda} &=&  \mbox{\rm Res}_{z,z_1,z_2,z_3}\frac{(1+z) ^{2p-1-t}  (1+z_1) ^t (1+z_2) ^t (1+z_3) ^t}{z^{2+2p} (z_1 z_2 z_3) ^{4p} } \nonumber \\
&& \cdot (1-\frac{z_1}{z}) ^{-2p} (1-\frac{z_2}{z}) ^{-2p}  (1-\frac{z_3}{z}) ^{-2p}  (z_1-z_2) ^{2p} (z_1-z_3) ^{2p} (z_2-z_3) ^{2p} v_{\lambda} \nonumber \\
&=& A_p {t+2p \choose 4p-1} {t \choose 4p-1} v_{\lambda} \qquad (A_p \ne 0), \nonumber
\eea
where, as usual, an expression $(1+x/y)^s$ is always expanded in positive  powers of $x$.

\end{conjecture}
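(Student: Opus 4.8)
The plan is to regard both sides as polynomials in the weight parameter $t$ (the $\alpha(0)$-eigenvalue of $v_{\lambda}$) and to prove equality by matching degree, roots, and leading coefficient. First I would simplify the $z$-dependence: under the stated expansion convention one has, formally, $(1-z_i/z)^{-2p}=z^{2p}(z-z_i)^{-2p}$, so the $z$-factor collapses and the whole integrand becomes $(1+z)^{2p-1-t}\,z^{4p-2}\prod_{i=1}^{3}(z-z_i)^{-2p}\cdot(1+z_1)^t(1+z_2)^t(1+z_3)^t(z_1z_2z_3)^{-4p}(z_1-z_2)^{2p}(z_1-z_3)^{2p}(z_2-z_3)^{2p}$. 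Expanding $(1+z)^{2p-1-t}=\sum_k\binom{2p-1-t}{k}z^k$ and $(1+z_i)^t=\sum_{m_i}\binom{t}{m_i}z_i^{m_i}$, every surviving monomial contributes the product $\binom{2p-1-t}{k}\binom{t}{m_1}\binom{t}{m_2}\binom{t}{m_3}$, a polynomial in $t$ of degree $k+m_1+m_2+m_3$.

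The key bookkeeping is a degree computation. The residue conditions force $k=1+2p+a_1+a_2+a_3$ (from $\mbox{Res}_z$) and $m_i+a_i+w_i=4p-1$ (from $\mbox{Res}_{z_i}$), where the $a_i\ge 0$ are the exponents produced by $(1-z_i/z)^{-2p}$ and $w_i$ is the $z_i$-degree drawn from the Vandermonde factor. Since that factor is homogeneous of total degree $6p$, one has $w_1+w_2+w_3=6p$ identically, and summing the relations gives $k+m_1+m_2+m_3=8p-2$ for every admissible configuration. Hence the left-hand side is a polynomial in $t$ of degree at most $8p-2$, matching the degree of $A_p\binom{t+2p}{4p-1}\binom{t}{4p-1}$, whose $8p-2$ roots (with multiplicity) lie at $\{0,\dots,4p-2\}\cup\{-2p,\dots,2p-2\}$, the overlap $\{0,\dots,2p-2\}$ giving double roots.

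Next I would locate those roots. When $t\in\{-1,0,\dots,2p-1\}$ the exponent $2p-1-t$ is a nonnegative integer, so $(1+z)^{2p-1-t}$ is a polynomial of degree $2p-1-t$, and the constraint $k=1+2p+\sum a_i\le 2p-1-t$ has no solution with $a_i\ge 0$; the residue therefore vanishes identically. This yields the simple roots at $t=-1$ and $t=2p-1$ and first-order vanishing on $\{0,\dots,2p-2\}$. The complementary roots $\{-2p,\dots,-2\}$ and $\{2p,\dots,4p-2\}$, together with the doubling on $\{0,\dots,2p-2\}$, I would extract from the analogous truncation of the factors $(1+z_i)^t$ at nonnegative integer $t$, exploiting the symmetry $t\mapsto 2p-2-t$ of the root set (which I expect to verify through the inversion $z\mapsto z^{-1}$, $z_i\mapsto z_i^{-1}$ of the integration variables). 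Matching these shows the left-hand side is a constant multiple of $\binom{t+2p}{4p-1}\binom{t}{4p-1}$.

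It then remains to identify that constant with $A_p$ and, crucially, to show $A_p\ne 0$. Since all surviving monomials share the same $t$-degree $8p-2$, the coefficient of $t^{8p-2}$ equals $A_p/((4p-1)!)^2$ and is a single, $t$-free constant term assembled from the Vandermonde and the binomials $\binom{-2p}{a_i}$. I would evaluate this pure constant term by the Morris constant term identity \cite{mo}, exactly as in the proof of Lemma \ref{4a}, obtaining an explicit product of factorials and binomial coefficients. The main obstacle is precisely this step: carrying out the four-fold residue for general $p$ and recasting it in Morris/Selberg (or Aomoto) form is delicate, and proving that the resulting product is genuinely nonzero — rather than merely recovering the polynomial shape — is the real content of Conjecture \ref{conj-const}. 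That the $p=2$ case can be checked by computer algebra is consistent with this nonvanishing and would serve as a first confirmation.
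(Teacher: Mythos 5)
You should know at the outset that the paper does not prove this statement: it is genuinely left as a conjecture, verified by Mathematica for $p\le 10$, and the only progress made (Appendix B) is to expand $\prod_i(1-z_i/z)^{-2p}$ via the Cauchy kernel for Jack polynomials and invoke Kadell's Selberg--Jack constant term identity \cite{k}, which reduces $S(t,3,p)$ to an explicit finite sum over partitions $\lambda_1\ge\lambda_2\ge 0$ of products of binomials in $t$ --- but the closed product form is not extracted from that sum. Your interpolation strategy (degree bound, root location, leading coefficient) is therefore a genuinely different route, and parts of it are sound: the residue constraints do force $k+m_1+m_2+m_3=8p-2$ for every surviving configuration, so $S(t,3,p)$ has $t$-degree at most $8p-2$, and your truncation argument for $(1+z)^{2p-1-t}$ does rigorously give simple vanishing at the $2p+1$ points $t=-1,0,\dots,2p-1$.

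However, the remaining steps have concrete gaps. (a) Those $2p+1$ simple roots are all you actually obtain; the other $6p-3$ roots of ${t+2p \choose 4p-1}{t \choose 4p-1}$ --- the points $\{-2p,\dots,-2\}$ and $\{2p,\dots,4p-2\}$, and the \emph{second-order} vanishing on $\{0,\dots,2p-2\}$ --- are not established. Truncation of $(1+z_i)^t$ at a nonnegative integer $t$ only discards terms with $m_i>t$, and since $m_i\le 4p-1$ automatically this forces nothing for $t\in\{2p,\dots,4p-2\}$; the points $\{-2p,\dots,-2\}$ are negative integers, where no factor truncates at all. The inversion $z\mapsto z^{-1}$, $z_i\mapsto z_i^{-1}$ is not an innocent substitution here, because the iterated residue depends on the expansion convention $|z|>|z_i|$, which inversion reverses; and even if the symmetry $t\mapsto 2p-2-t$ were proved, it merely permutes the roots you already have rather than producing new ones. (b) The leading coefficient is not a Morris constant term: resumming $\sum_k\frac{(-1)^k}{k!}z^k$ and $\sum_{m_i}\frac{z_i^{m_i}}{m_i!}$ identifies it as $\mbox{Res}_{z,z_1,z_2,z_3}\,e^{-z+z_1+z_2+z_3}z^{-2-2p}(z_1z_2z_3)^{-4p}\prod_i(1-z_i/z)^{-2p}\Delta(z_1,z_2,z_3)^{2p}$, which carries the extra coupled variable $z$ and is not of the form evaluated in \cite{mo} or in the proof of Lemma \ref{4a}; the inability to evaluate precisely this kind of residue is why the statement remains a conjecture. (c) The nonvanishing $A_p\ne 0$, which you correctly identify as the real content, is left entirely open. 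In short, this is a plausible plan of attack --- arguably more elementary than the Jack-polynomial reduction in Appendix B --- but it is not a proof, and it does not close the problem any further than the paper does.
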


\begin{remark} \label{rema-const}
More generally, motivated by the action of $e^{-\alpha} \circ Q ^ r e ^{-(r-1) \alpha}  $ on $\overline{M(1)}$--modules, we
consider
\be \label{strp} S(t,r,p) := \mbox{\rm Res}_{z,z_1,...,z_r}\frac{(1+z) ^{2p-1-t}  \prod_{i=1}^r (1+z_i) ^t }{z^{2+2p} (z_1 \cdots z_r) ^{2(r-1)p} }
\cdot \prod_{i=1}^r (1-\frac{z_i}{z}) ^{-2p} \Delta(z_1,...,z_r)^{2p}.
\ee
We expect there are nonzero constants  $\lambda_{p,r} $ and  $\tilde{\lambda}_{p,r}$, such that
$$S(t,r,p)=\lambda_{r,p} {t+ (r-1)p \choose 2rp-1} \prod_{i=1}^{r-2} {t+(i-1)p \choose 2ip-1}$$
$$=\tilde{\lambda}_{r,p}{t+ (r-1)p \choose 2rp-1} \prod_{i=1}^{r-2} {t+(i-1)p \choose (r-1)p-1}.$$
For example, for $r=2$, we know that $S(t,2,p)={t+p \choose 4p-1}$ and $\lambda_{2,p}= {2p \choose p}{2p-2 \choose p-1}$. This is a result proven in
\cite{am2}. For $r=3$ the statement is essentially the conjecture above.
\end{remark}

 \noindent  Because of
 $$ u = -3/2 Q^2 (H \circ F ^{(2)})-\frac{3}{2} H \circ H^{(2)},$$ and thus
 $$ [u] =-3/2 [Q^2 (H \circ F ^{(2)})] \in A(\overline{M(1)}),$$ the above conjecture implies the non-triviality of constant $K$.
Now assume that this conjecture holds. (We verified this conjecture using Mathematica up to $p \le 10$.) Then
\bea \label{rel-3} &&  \ell ([\omega]) *  [F ^{(2)}] = 0, \quad \ell (x) = \prod_{i=1} ^{p} (x - h_{i,3}). \eea
Similarly,
$$  \ell ([\omega]) *  [E ^{(2)}] = 0. $$

\begin{remark}
Assume that polynomials $r(x)$ and $s(x)$ are relatively prime. Then relations (\ref{rel-1})-(\ref{rel-3}) will imply that $[H] * h([\omega] ) = 0$ where
$$ h(x) =  \prod_{ i =1} ^p (x- h_{3p - i,1})  (x - h_{i,3}) \prod_{j=1} ^{2 p } (x- h_{3 p +1/2 -j,1}) = 0. $$
in Zhu's algebra $A(\triplet ^{{A }_2 })$. This will prove Conjecture \ref{conj-1}.

In what follows, we will see that Conjecture \ref{conj-1} holds for $p \le 5$. Similar calculations can be made by computer for small values of $p$. We checked this conjecture up to $p=10$.
\end{remark}

By using Mathematica/Maple we get a list of polynomials $s(x)$ and $r(x)$ for $p \le 5 $ (up to a scalar factor):

\vskip 5mm

{\tiny
\begin{center}
  \begin{tabular}{|c|c|c|c}
    \hline
  $ \ \  p$  & $s(x)$ & $ r(x) $ \\ \hline \hskip 2mm
 $2 $& $  17 x + 28  $ & $42 - 25 x + 10 x^2 $ \\ \hline
  $ \ \ 3 $  & $ 6006 + 937 x + 932 x^2 $ &  $60060 - 47123 x + 15897 x^2 - 2520 x^3 +  $ \\
   &  & $+ 336  x ^4 $ \\
   \hline
   $\  \ 4  $  & $  1312740 - 8809 x + 81758 x^2 + 25952 x^3 $ & $5168913750 - 4548646125 x + 1727438350 x^2 - 360026392 x^3 +
   $ \\
   &   & $+ 45686256 x^4 - 3351040 x^5 + 225280 x^6$  \\
   \hline
   $ \ \ 5 $  & $ 3480248772 - 309156003 x + 118443661 x^2     $ & $ 63145633719168 - 59216427967788 x + 24520167453753 x^2   $ \\
    $  $  & $ + 24302920 x^3 + 6427600 x^4   $ & $ -
  5855373170478 x^3 + 890653763025 x^4 - 89486430800 x^5   $ \\
    &   &  $  +
  6052956000 x^6 - 255840000 x^7 + 10400000 x^8 $ \\
    \hline
  \end{tabular}
\end{center}
 }
 \vskip 5mm
They are relatively prime.

 \vskip 5mm

\begin{conjecture} \label{conj-2}

\item[(i)] Zhu's algebra $A(\triplet ^{ A_2})$ is generated by $[\omega], [H], [E ^{(2)}], F ^{(2)}]$ which satisfy the following relations:
\bea
&&  [E ^{(2)}] ^2 = [F ^{(2)}] ^2 = 0, [H] ^2 = P([\omega]) = C_p \prod_{i=1} ^{2 p-1} ([\omega]- h_{i,1}) \nonumber \\
&& \ell ([\omega]) *  [F ^{(2)}] = \ell ([\omega] ) * [E ^{(2)}] = 0, \ \ h([\omega]) * [H] = 0. \nonumber
\eea

\item[(ii)] The center of Zhu's algebra $A(\triplet ^{ A_2})$ is isomorphic to
$$ {\Bbb C}[x ] / \langle g_{2,p}(x) \rangle $$
where  $\langle g_{2,p}(x) \rangle$ is the principal ideal generated by polynomial
$$g_{2,p}(x) =   \prod_{ i =1} ^ { 3 p - 1}  (x- h_{i, 1})   \prod_{ i =1} ^ { p}   (x- h_{i,3})  \prod_{i=1} ^{2 p } (x- h_{3 p +1/2 -j,1}). $$

\item[(iii)] Dimension of  $A(\triplet ^{ A_2})$ is  $12  p -1$.
\end{conjecture}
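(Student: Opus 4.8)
The plan is to prove the presentation in part (i) and then read parts (ii) and (iii) off it, the whole argument resting on a matching pair of dimension bounds. Generation of $A(\triplet^{A_2})$ by $[\omega],[H],[E^{(2)}],[F^{(2)}]$ is immediate from Theorem \ref{1a}, since strong generators of a vertex algebra always span its Zhu algebra. The relations come in two groups. The diagonal relations $[E^{(2)}]^2=[F^{(2)}]^2=0$ and $[H]^2=P([\omega])$ I would obtain exactly as in the Poisson-algebra computation of Lemma \ref{3a} and its Zhu-algebra refinement in \cite{am2}: inside the lattice model $V_{\Z(2\alpha)}$ the $*$-squares of $e^{-2\alpha}$ and $Q^4e^{-2\alpha}$ vanish for charge reasons, while $[H]^2$ reduces modulo $O(\overline{M(1)})$ to $P([\omega])=C_p\prod_{i=1}^{2p-1}([\omega]-h_{i,1})$ with the normalization of Lemma \ref{3a}. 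The annihilation relations $\ell([\omega])*[E^{(2)}]=\ell([\omega])*[F^{(2)}]=0$ are relation (\ref{rel-3}), while $h([\omega])*[H]=0$ follows by combining (\ref{rel-1})--(\ref{rel-3}) once $r(x)$ and $s(x)$ are coprime.

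Granting these relations, I would prove completeness by a dimension sandwich. Let $\mathcal A$ be the abstract algebra on the four generators modulo the relations of (i). First, $\mathcal A$ is already finite dimensional: since $[H]^2=P([\omega])$, the relation $h([\omega])*[H]=0$ forces $h([\omega])P([\omega])=h([\omega])*[H]^2=(h([\omega])*[H])*[H]=0$, and comparing roots with multiplicity one checks that $h(x)P(x)=g_{2,p}(x)$ up to a nonzero scalar, so $[\omega]$ is killed by $g_{2,p}$ of degree $6p-1$ (whose double roots are exactly the $p-1$ coincidences $h_{j,1}=h_{2p-j,1}$, $j=1,\dots,p-1$). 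Using $[E^{(2)}]^2=[F^{(2)}]^2=0$, $[H]^2=P([\omega])$ and the commutator formula (\ref{rel-2}) to reorder generators, every element of $\mathcal A$ is a combination of $[\omega]^a$ $(0\le a\le 6p-2)$, $[\omega]^a[H]$ $(0\le a\le 4p-1)$, $[\omega]^a[E^{(2)}]$ and $[\omega]^a[F^{(2)}]$ $(0\le a\le p-1)$, the ranges being forced by $\deg g_{2,p}=6p-1$, $\deg h=4p$ and $\deg\ell=p$. This yields $\dim\mathcal A\le (6p-1)+4p+p+p=12p-1$, and since the relations hold in $A(\triplet^{A_2})$ we get a surjection $\mathcal A\twoheadrightarrow A(\triplet^{A_2})$, hence $\dim A(\triplet^{A_2})\le 12p-1$.

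For the matching lower bound I would evaluate these monomials on the modules of Section \ref{konstrukcija-modula}. The $8p$ irreducibles $\Lambda(i)_0,\Lambda(i)_2,\Pi(i)^\pm,R(j)$ are pairwise non-isomorphic, and their tops carry enough distinct $(L(0),H(0))$-data that Vandermonde determinants in the relevant conformal weights make the corresponding semisimple images of the monomials independent; as these are simple $A(\triplet^{A_2})$-modules of dimensions $1,2,1,1$ one gets $\dim\!\big(A(\triplet^{A_2})/\mathrm{rad}\big)\ge p\cdot 1+p\cdot 4+2p\cdot 1+4p\cdot 1=11p$. The remaining $p-1$ dimensions live in the radical, which I would detect exactly as for $\triplet$ itself, via the logarithmic modules of \cite{am1,am2} (restricted to $\triplet^{A_2}$): on their tops $L(0)=[\omega]$ has nontrivial rank-two Jordan blocks at the $p-1$ double roots of $g_{2,p}$, so the minimal polynomial of $[\omega]$ has degree $6p-1$ rather than its squarefree degree $5p$, the excess $p-1$ being the inherited nilpotent directions. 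Thus $\dim A(\triplet^{A_2})\ge 11p+(p-1)=12p-1$, and together with the upper bound the surjection $\mathcal A\twoheadrightarrow A(\triplet^{A_2})$ is an isomorphism of dimension $12p-1$, proving (i) and (iii) and forcing the $8p$ modules to exhaust the simples, i.e.\ Conjecture \ref{conj-1}. Part (ii) then follows: $[\omega]$ is central with annihilator $g_{2,p}$, so $\mathbb{C}[x]/\langle g_{2,p}(x)\rangle\subseteq Z(A(\triplet^{A_2}))$, and writing a general element in the basis above and imposing commutation with $[E^{(2)}],[F^{(2)}],[H]$ forces, through (\ref{rel-2}) and $[E^{(2)}]^2=[F^{(2)}]^2=0$, the off-diagonal coefficients to vanish, leaving exactly $\mathbb{C}[[\omega]]/\langle g_{2,p}\rangle$.

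The genuinely hard steps are the two inputs the paper itself leaves open. First, the annihilation relations with nonzero leading constants rest on the constant-term identity of Conjecture \ref{conj-const} and, for a uniform treatment over all $r$, the family $S(t,r,p)$ of Remark \ref{rema-const}; without it one cannot exclude that $\ell([\omega])$ annihilates only a proper multiple of $[E^{(2)}],[F^{(2)}]$, which would destroy the degree bounds used above. Second, the collapse of (\ref{rel-1})--(\ref{rel-3}) to the single relation $h([\omega])*[H]=0$ requires $r(x)$ and $s(x)$ to be coprime for every $p$, a fact presently verified only by machine for small $p$; a uniform proof is the crux, and I would attempt it by showing $\mathrm{Res}(r,s)\neq 0$ directly from the binomial/constant-term expressions for the coefficients of $r$ and $s$. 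Once these two analytic facts are secured, the representation-theoretic bookkeeping above — the Vandermonde independence on the irreducible tops and the radical inherited from $\triplet$ — is routine.
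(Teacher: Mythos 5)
The statement you are addressing is labelled a \emph{conjecture} in the paper, and the paper does not prove it: it only derives the relations (\ref{rel-1})--(\ref{rel-3}) (the last conditional on the constant-term identity of Conjecture \ref{conj-const}), observes that coprimality of $r(x)$ and $s(x)$ would yield $h([\omega])*[H]=0$, and verifies the claims by computer for small $p$. Your proposal follows exactly this strategy for part (i) and is honest about the two open analytic inputs --- the identity for $S(t,3,p)$ and the coprimality of $r$ and $s$ --- which remain unproven, so your argument is at best conditional, with essentially the same status as the paper's own discussion. Your observation that $h(x)P(x)$ coincides with $g_{2,p}(x)$ up to the scalar $C_p$ is correct and is a clean way to see where the degree bound $6p-1$ on the minimal polynomial of $[\omega]$ comes from.

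Beyond that, your dimension sandwich (which goes further than the paper, which offers no argument at all for (ii) and (iii)) has gaps of its own. For the upper bound, the abstract algebra $\mathcal A$ presented by \emph{only} the relations listed in (i) is not obviously spanned by your monomials: to reorder and collapse words containing both $[E^{(2)}]$ and $[F^{(2)}]$ you need, in addition to the commutator (\ref{rel-2}), an expression for $[E^{(2)}]*[F^{(2)}]+[F^{(2)}]*[E^{(2)}]$ as a polynomial in $[\omega]$ and $[H]$; this does hold because that element lies in the image of $A(\overline{M(1)})$, but it is an extra relation not contained in (i), so either the presentation in (i) is incomplete as a presentation or your spanning argument uses unstated relations. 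For the lower bound, the independence of the images of your monomials on the $8p$ module tops is asserted via ``Vandermonde determinants'' but not checked (one must verify that the two-dimensional tops of $\Lambda(i)_2$ are irreducible two-dimensional $A(\triplet^{A_2})$-modules and that no accidental coincidences of $(L(0),H(0))$-data occur), and the claim that exactly $p-1$ further dimensions sit in the radical requires constructing the relevant logarithmic $\triplet^{A_2}$-modules and showing that $[\omega]$ has genuine rank-two Jordan blocks there --- the paper only \emph{conjectures}, in the remark following the statement, that all logarithmic modules are restrictions from $\triplet$. So your plan is reasonable and correctly structured, but it does not close any of the gaps that make the statement a conjecture rather than a theorem.
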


\begin{remark}
From this conjecture will follow that $\triplet ^{A_2}$ does not have "new" logarithmic modules, only logarithmic modules which are obtained by restriction of logarithmic modules for $\triplet$ (cf. \cite{am2}). We believe that similar statement holds for general $m$. More evidence for this statement will be presented in Section \ref{karakteri}
\end{remark}

In the simplest case  $p=2$, we get relations

\bea \label{rel-4} &&    [H]  * ([\omega]- 3) ( [\omega]- \tfrac{15}{8}) ( [\omega]-1) ( [\omega] -\tfrac{3}{8})  \nonumber \\ && ([\omega]-\tfrac{45}{32}) ([\omega]- \tfrac{21}{32}) ([\omega]- \tfrac{5}{32}) ([\omega] + \tfrac{3}{32}) (17 [\omega] + 28) = 0.   \eea

\bea \label{rel-5} &&  [[E ^{(2)}], [F ^{(2)}] ] = \nu [H]   * ( [\omega]-1) ( [\omega] -\tfrac{3}{8})  \nonumber \\  && ([\omega]-\tfrac{45}{32}) ([\omega]- \tfrac{21}{32}) ([\omega]- \tfrac{5}{32}) (42 - 25 [\omega] + 10 [\omega]^2  ) \eea

By combining (\ref{rel-3}) and (\ref{rel-5}) we get

\bea \label{rel-6} &&   [H]  * ([\omega]-3) ([\omega]- \tfrac{15}{8} ) ( ( [\omega]-1) ( [\omega] -\tfrac{3}{8})  \nonumber \\  && ([\omega]-\tfrac{45}{32}) ([\omega]- \tfrac{21}{32}) ([\omega]- \tfrac{5}{32}) ([\omega] + \tfrac{3}{32})  (42 - 25 [\omega] + 10 [\omega]^2  )  = 0. \eea

Now (\ref{rel-4}) and (\ref{rel-6}) implies that

\bea \label{rel-7} &&    [H]* ([\omega]- 3) ( [\omega]- \tfrac{15}{8}) ( [\omega]-1) ( [\omega] -\tfrac{3}{8})  \nonumber \\ && ([\omega]-\tfrac{45}{32}) ([\omega]- \tfrac{21}{32}) ([\omega]- \tfrac{5}{32}) ([\omega] + \tfrac{3}{32})  = 0.   \eea

\begin{proposition}
Conjectures  \ref{conj-1} and \ref{conj-2} holds for  $m=2$ and $p$ small ($p \le 10$).
\end{proposition}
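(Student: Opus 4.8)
The plan is to reduce both conjectures, for $m=2$, to a finite computation inside Zhu's algebra $A(\triplet^{A_2})$ that can be carried out explicitly for each $p \le 10$. By Theorem \ref{1a} the algebra $A(\triplet^{A_2})$ is generated by $[\omega], [H], [E^{(2)}], [F^{(2)}]$, so it suffices to determine a \emph{complete} set of relations among these four generators and then transfer the result to module theory via Zhu's correspondence. Conjecture \ref{conj-2}(i) is the relation list, (ii)--(iii) are consequences of a dimension/center computation, and Conjecture \ref{conj-1} follows from matching simple $A(\triplet^{A_2})$--modules with the $8p$ modules built in Section \ref{konstrukcija-modula}.

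First I would establish the structural relations. The identities $[E^{(2)}]^2=[F^{(2)}]^2=0$ and $[H]^2=P([\omega])=C_p\prod_{i=1}^{2p-1}([\omega]-h_{i,1})$ come directly from the known structure of $A(\overline{M(1)})$ (Lemma \ref{3a} and \cite{am2}), and the two parametric families (\ref{rel-1}) and (\ref{rel-2}) come from the OPE computations of $E^{(2)}\circ F^{(2)}$ and of the commutator $[[E^{(2)}],[F^{(2)}]]$ together with Lemma \ref{5.1}. The zeros of the polynomial cofactors in (\ref{rel-1})--(\ref{rel-2}) are forced by evaluation: each of these operators acts trivially in $A(\triplet^{A_2})$, while the corresponding product in $A(\overline{M(1)})$ is nonzero, so every lowest component of the $8p$ constructed modules contributes one root of the cofactor.

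The two computational inputs are then: verify Conjecture \ref{conj-const} for $p\le 10$ with Mathematica, which yields the nonvanishing of $K$ and hence the annihilation relations (\ref{rel-3}), namely $\ell([\omega])*[E^{(2)}]=\ell([\omega])*[F^{(2)}]=0$ with $\ell(x)=\prod_{i=1}^p(x-h_{i,3})$; and compute the cofactors $s(x)$ and $r(x)$ explicitly for each $p\le 10$, checking that $\gcd(r,s)=1$. Granting coprimality I would run the elimination sketched after Conjecture \ref{conj-const}: since $[\omega]$ is central in $A(\triplet^{A_2})$, left-multiplying (\ref{rel-2}) by $\ell([\omega])$ and using (\ref{rel-3}) kills the commutator side and gives $[H]*h([\omega])*r([\omega])=0$, while (\ref{rel-1}) gives $[H]*h([\omega])*s([\omega])=0$; a B\'ezout relation $a(x)r(x)+b(x)s(x)=1$ then collapses these to $[H]*h([\omega])=0$, where $h$ runs over all lowest conformal weights. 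For $p=2$ this is precisely the passage (\ref{rel-4})--(\ref{rel-7}). This assembles the full relation set of Conjecture \ref{conj-2}(i).

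To finish I would show the relations are complete, i.e. that the abstract algebra they define has dimension exactly $12p-1$ with center $\mathbb{C}[x]/\langle g_{2,p}(x)\rangle$. Using $E^2=F^2=0$, $H^2=P(\omega)$ and the annihilators $\ell(\omega)E=\ell(\omega)F=0$, $h(\omega)H=0$, one extracts a finite PBW--type spanning set graded by $[\omega]$--degree; computing its size and verifying linear independence for each $p\le 10$ by computer pins the dimension and identifies the center with polynomials in $[\omega]$ modulo $g_{2,p}$, giving Conjecture \ref{conj-2}(ii)--(iii). Finally, Zhu's theorem yields a bijection between irreducible $A(\triplet^{A_2})$--modules and irreducible $\triplet^{A_2}$--modules, and matching the simple quotients with the $8p$ modules of Section \ref{konstrukcija-modula} proves Conjecture \ref{conj-1}. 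The main obstacle is exactly the coprimality of $r$ and $s$ together with the completeness/dimension count: there is no a priori reason for $\gcd(r,s)=1$ or for the listed relations to exhaust $A(\triplet^{A_2})$, so both are established only by explicit computation, and it is the coprimality that permits the commutator relation to collapse onto $[H]*h([\omega])=0$.
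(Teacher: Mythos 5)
Your proposal follows essentially the same route as the paper: derive the relations (\ref{rel-1})--(\ref{rel-3}) from the circle-product computations, Lemma \ref{5.1} and the constant term identity of Conjecture \ref{conj-const}, compute $r(x)$ and $s(x)$ by machine, and use their coprimality (via B\'ezout, exactly as in the passage (\ref{rel-4})--(\ref{rel-7})) to collapse everything to $[H]*h([\omega])=0$, from which the classification of irreducibles follows through Zhu's correspondence. The only difference is that you spell out the completeness/dimension verification needed for Conjecture \ref{conj-2}(ii)--(iii), which the paper leaves implicit in its reference to computer checks for $p\le 10$.
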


Let us describe the structure of algebra $ {\mathcal P}( \triplet ^{A_m} ) = \triplet ^{A_m} / C_2 ( \triplet ^{A_m})$  in the case $m=p=2$. It is generated by
$ \overline{E^{(2)} }, \overline{F ^{(2)} }, \overline{H}, \overline{\omega} $. The following relations holds:

 \bea && \overline{\omega} ^ {12} = \overline{E^{(2)} }^2 =  \overline{F ^{(2)} } ^2 = 0 \nonumber \\
&& \overline{H} ^2 = \nu \overline{\omega} ^3, \ \overline{E ^{(2)}  }  \overline{ F ^{(2)} }  \in  {\mathcal P} (\overline{M(1)} ) \nonumber \\
&& \overline{\omega} ^9 \overline{H} = \overline{\omega} ^2  \overline{E^{(2)} } = \overline{\omega} ^2  \overline{F^{(2)} }= 0. \nonumber
\eea

\begin{remark}We believe that there are no further relations and therefore
 $\dim  {\mathcal P}( \mathcal{W}(2) ^{A_2} ) =25$. So  $\mathcal{W}(2) ^{A_2}$ is (most likely) new example of vertex operator algebra such that $\dim \mathcal{P} (V) > \dim A(V). $
 \end{remark}

\section{Irreducible characters}
\label{karakteri}

In this section we compute the $SL(2,\mathbb{Z})$-closure of the
character of $\am$.

Set
$$\Theta_{\lambda,k}(\tau)=\sum_{n \in \mathbb{Z}+\frac{\lambda}{2k}} q^{k n^2}$$
and
$$\partial \Theta_{\lambda,k}(\tau)=\sum_{n \in \mathbb{Z}+\frac{\lambda}{2k}} 2kn q^{k n^2}.$$

Define
$$P_{\lambda,k}(\tau):=\frac{1}{\eta(\tau)} \sum_{n \in \mathbb{Z}} (2n+1) q^{k(n+\frac{\lambda}{2k})^2}=\frac{(k-\lambda)\Theta_{\lambda,k}(\tau)+(\partial \Theta)_{\lambda,k}(\tau)}{k \eta(\tau)},$$
and
$$Q_{\lambda,k}(\tau):=\frac{1}{\eta(\tau)} \sum_{n \in \mathbb{Z}} (2n) q^{k(n+\frac{\lambda}{2k})^2}=\frac{(-\lambda)\Theta_{\lambda,k}(\tau)+(\partial \Theta)_
{\lambda,k}(\tau)}{k \eta(\tau)}.$$

Observe the relations

\bea \label{sym}
&& Q_{-\lambda,k}(\tau)=-Q_{\lambda,k}(\tau), \nonumber \\
&& Q_{2k+\lambda,k}(\tau)=Q_{\lambda,k}(\tau)-2 \frac{\Theta_{\lambda,k}}{\eta(\tau)}.
\eea

By using decomposition of $\am$ into irreducible Virasoro modules we obtain
$${\rm ch}_{\am}(\tau)$$
$$=\frac{1}{\eta(\tau)} (\sum_{n \geq 0} (2n+1) q^{p(mn+\frac{p-1}{2p})^2}-\sum_{n=1}^\infty (2n-1) q^{p(mn-(m-1)-\frac{p-1}{2p})^2})$$
$$+\frac{1}{\eta(\tau)}(\sum_{n \geq 0} (2n+1) q^{p(mn+1+\frac{p-1}{2p})^2}-\sum_{n=1}^\infty (2n-1) q^{p(mn-(m-2)-\frac{p-1}{2p})^2})$$
$$+ \cdots + \frac{1}{\eta(\tau)}(\sum_{n = 0}^\infty (2n+1) q^{p(mn+m-1+\frac{p-1}{2p})^2}-\sum_{n=1}^\infty (2n-1) q^{p(mn-\frac{p-1}{2p})^2}).$$

The first and the last term combine into (after the substitution $n \mapsto -n$ in the last term)
$$\frac{1}{\eta(\tau)} \sum_{n \in \mathbb{Z}} (2n+1) q^{pm^2(n+\frac{p-1}{2pm})^2}=
 \frac{1}{\eta(\tau)}  \sum_{n \in \mathbb{Z}} (2n+1) q^{pm^2(n+\frac{mp-m}{2pm^2})^2}=P_{mp-m,pm^2}(\tau).$$
Similarly, we combine the remaining terms and obtain


\begin{theorem} (Characters of $\Lambda(i)^\pm_j$)
For $i=1,...,p$
\be \label{lambda-0}
{\rm ch}_{\Lambda(i)_0}(\tau)=P_{m(p-i),pm^2}(\tau)+P_{m(p-i+2p),pm^2}(\tau)+\cdots+P_{m(p-i+2p(m-1)),pm^2}(\tau).
\ee
In particular,
$${\rm ch}_{\am}(\tau)=P_{m(p-1),pm^2}(\tau)+P_{m(3p-1),pm^2}(\tau)+\cdots + P_{m((2m-1)p-1),pm^2}(\tau).$$

For $j=1,...,k-1$,
\bea \label{lambda-111}
{\rm ch}_{\Lambda(i)^\pm_j}(\tau)=P_{m(p-i+2pj),pm^2}(\tau)+\cdots+P_{m(p-i+2p(m-j-1)),pm^2}(\tau) \nonumber \\
+Q_{m(p-i-2pj),pm^2}(\tau)+\cdots +Q_{m(p-i+2p(j-1)),pm^2}(\tau),
\eea
\be
{\rm ch}_{\Lambda(i)_m}(\tau)=Q_{mi-pm^2,pm^2}(\tau)+\cdots + Q_{mi+pm^2-2pm,pm^2}(\tau).
\ee

\end{theorem}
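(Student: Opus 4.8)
The plan is to derive each character formula directly from the Virasoro decompositions of the modules $\Lambda(i)_0$, $\Lambda(i)^\pm_j$, and $\Lambda(i)_m$ given in Propositions \ref{lambda-even} and \ref{lambda-odd}, and then to repackage the resulting theta-like sums into the functions $P_{\lambda,k}$ and $Q_{\lambda,k}$. The starting point is the standard fact that the graded character of an irreducible Virasoro module $L(c_{p,1}, h_{i,2n+1})$ contributes a single power of $q$ of the form $q^{h_{i,2n+1}}$ divided by $\eta(\tau)$, after accounting for the $-c/24 + \cdots$ normalization; the key arithmetic input is that $h_{i,2n+1}$ can be written as a perfect square in the exponent, namely $h_{i,s}$ corresponds to $p(\text{something})^2$ up to the global shift absorbed by $1/\eta(\tau)$. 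Concretely, I would first verify the identity $h_{i,2(nm+k)+1} = pm^2\bigl(n + \tfrac{mp - mi + 2pk}{2pm^2}\bigr)^2 + \text{const}$ so that each Virasoro summand becomes a monomial in the definition of $\Theta_{\lambda,pm^2}$ with $\lambda = m(p-i+2pk)$.

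First I would treat $\Lambda(i)_0$. Summing over $n \geq 0$ and $k = 0, \dots, m-1$ with multiplicity $(2n+1)$, each fixed $k$ produces $\sum_{n \geq 0}(2n+1) q^{pm^2(n + \frac{\lambda}{2pm^2})^2}$ with $\lambda = m(p-i+2pk)$, which is exactly $\eta(\tau) P_{\lambda, pm^2}(\tau)$ by the definition of $P_{\lambda,k}$. This gives \eqref{lambda-0} immediately, and setting $i=1$ recovers the character of $\am$ itself. Next, for $\Lambda(i)^\pm_j$ the decomposition in part (2) of the Propositions splits into a block carrying multiplicities $(2n+1)$ over $k = j, \dots, m-j-1$ and a block carrying $(2n+2)$ over $k = m-j, \dots, m+j-1$. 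The first block again assembles into a sum of $P_{\lambda,pm^2}$'s exactly as before. For the second block the multiplicity $(2n+2) = 2(n+1)$ is even, and after the reindexing $n \mapsto -(n+1)$ (or a shift that converts $\sum (2n+2)q^{\cdots}$ into a symmetric sum over $\mathbb{Z}$) the block collapses into a sum of $Q_{\lambda,pm^2}$'s, since $Q_{\lambda,k}$ is precisely the even-multiplicity analogue $\tfrac{1}{\eta}\sum_{n} (2n) q^{k(n+\lambda/2k)^2}$. Here I would invoke the symmetry relations \eqref{sym}, namely $Q_{-\lambda,k} = -Q_{\lambda,k}$ and $Q_{2k+\lambda,k} = Q_{\lambda,k} - 2\Theta_{\lambda,k}/\eta$, to fold the shifted range $k = m-j, \dots, m+j-1$ (whose corresponding $\lambda$-values exceed the fundamental domain) back into the stated range and to get the exact labels $m(p-i-2pj), \dots, m(p-i+2p(j-1))$. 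The formula for $\Lambda(i)_m$ in part (3), with uniform multiplicity $(2n+2)$ over $k = 0, \dots, m-1$, is handled by the same $Q$-repackaging, yielding the single line of $Q_{mi - pm^2, pm^2} + \cdots + Q_{mi + pm^2 - 2pm, pm^2}$.

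The main obstacle I anticipate is bookkeeping rather than conceptual: correctly matching the shifted index ranges in the even-multiplicity blocks to the arguments of $Q_{\lambda,k}$, and in particular managing the sign flips and theta-corrections produced by \eqref{sym} when the naive label falls outside the range $-k \le \lambda \le k$. Getting the precise affine labels $m(p-i+2pj)$ through $m(p-i+2p(m-j-1))$ for the $P$-part and the complementary $Q$-labels to come out as stated requires care with the substitution $k \mapsto m-k$ (or $n \mapsto -n$) and with tracking which terms pick up a factor from $Q_{2k+\lambda,k} = Q_{\lambda,k} - 2\Theta_{\lambda,k}/\eta$. I would organize this by first writing every Virasoro summand in the canonical form $P$ or $Q$ with an \emph{unreduced} label, then reducing all labels simultaneously via \eqref{sym} at the end, so that any stray $\Theta_{\lambda,k}/\eta$ contributions can be checked to cancel in pairs or to be reabsorbed. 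Once the canonicalization is fixed, the three formulas follow by direct term-by-term comparison with the definitions of $P_{\lambda,pm^2}$ and $Q_{\lambda,pm^2}$.
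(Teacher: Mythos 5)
Your overall strategy is the same as the paper's: decompose each module into irreducible Virasoro summands (Propositions \ref{lambda-even}, \ref{lambda-odd}), rewrite the conformal weights $h_{i,2(nm+k)+1}$ as $pm^2(n+\tfrac{\lambda}{2pm^2})^2$ up to the constant absorbed by $1/\eta(\tau)$, and assemble the result into the functions $P_{\lambda,pm^2}$ and $Q_{\lambda,pm^2}$. However, there is a genuine gap at the very first step. You assert that the character of $L(c_{p,1},h_{i,2n+1})$ contributes a \emph{single} power of $q$ divided by $\eta(\tau)$. That is false: these are degenerate modules whose Verma covers have a singular vector, so
$$\mathrm{ch}\,L(c_{p,1},h_{r,s})=\frac{q^{h_{r,s}-c/24}-q^{h_{r,-s}-c/24}}{\prod_{n\ge 1}(1-q^n)},$$
a difference of two $q$-powers over $\eta(\tau)$. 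This is exactly why the paper's displayed computation of $\mathrm{ch}_{\am}$ contains, for each $k$-block, a positive sum $\sum_{n\ge 0}(2n+1)q^{\cdots}$ \emph{and} a negative sum $-\sum_{n\ge 1}(2n-1)q^{\cdots}$.

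The consequence is that your identification fails as an identity of $q$-series: $P_{\lambda,k}$ is by definition a bilateral sum over $n\in\mathbb{Z}$, whereas a single Virasoro block with multiplicities $(2n+1)$, $n\ge 0$, only produces the one-sided half $\sum_{n\ge 0}(2n+1)q^{k(n+\lambda/2k)^2}$; these are genuinely different functions. The missing half $\sum_{n<0}(2n+1)q^{k(n+\lambda/2k)^2}=-\sum_{n\ge 1}(2n-1)q^{k(n-\lambda/2k)^2}$ is supplied precisely by the negative (singular-vector) terms of the characters from a \emph{different} $k$-block, after the substitution $n\mapsto -n$. This pairing of the first and last blocks (and then of the remaining blocks in pairs) is the actual content of the paper's argument and is what produces $P_{mp-m,pm^2}$ etc.; the same mechanism, applied to the even-multiplicity blocks, produces the $Q$'s. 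Your proposed fix via the relations \eqref{sym} addresses only the normalization of the labels $\lambda$, not this missing cancellation, so as written the argument does not close. To repair it, start from the two-term Virasoro characters and exhibit the matching between the negative sum of the $k$-th block and the positive sum of its partner block before invoking the definitions of $P$ and $Q$.
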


The next lemma follows easily by looking at $q$-expansion

\begin{lemma}
$$\frac{1}{m} \sum_{j=0}^{m-1} \partial \Theta_{sm+2pj,pm^2}(\tau)= \partial \Theta_{s,p}(\tau).$$
\end{lemma}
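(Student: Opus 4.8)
The plan is to prove the identity by unfolding each theta series into its defining lattice sum and then reindexing the left-hand side so that each summand lands, term by term, on the right-hand side. First I would write out the definitions explicitly. The right-hand side is $\partial\Theta_{s,p}(\tau)=\sum_{N\in\Z+s/2p}2pN\,q^{pN^2}$, a sum with one contribution for each $N$ in the single coset $\Z+\tfrac{s}{2p}$. On the left, the $j$-th summand is $\partial\Theta_{\lambda_j,\,pm^2}(\tau)=\sum_{n\in\Z+\lambda_j/2pm^2}2pm^2 n\,q^{pm^2 n^2}$, where $\lambda_j=m(s+2pj)$ is the value dictated by the character formulas that feed into this lemma (note that in all the $P$ and $Q$ series occurring in the preceding theorem the first index is a multiple of $m$).

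The central move is the substitution $N=mn$ in each summand. Under it the exponent transforms as $pm^2 n^2=p(mn)^2=pN^2$, matching the right-hand side, while the coefficient transforms as $2pm^2 n=2pm\cdot(mn)=2pm\,N$; the global prefactor $\tfrac1m$ then converts this into $2pN$, which is exactly the coefficient of $q^{pN^2}$ in $\partial\Theta_{s,p}$. So, provided the index sets match up after reindexing, the identity will hold term by term, with no rearrangement of a conditionally convergent nature to worry about (both sides are honest $q$-series with well-separated exponents).

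The heart of the argument, and the step I expect to require the most care, is the bookkeeping of the index sets. Writing $n=\ell+\tfrac{m(s+2pj)}{2pm^2}$ with $\ell\in\Z$, one simplifies $\tfrac{m(s+2pj)}{2pm^2}=\tfrac{s}{2pm}+\tfrac{j}{m}$ and hence $N=mn=m\ell+\tfrac{s}{2p}+j$, so the reindexed $j$-th summand runs exactly over the coset $m\Z+\tfrac{s}{2p}+j$. I would then verify that as $j$ ranges over $\{0,1,\dots,m-1\}$ these $m$ cosets of $m\Z$ are pairwise disjoint and that their union is precisely $\Z+\tfrac{s}{2p}$; this is just the statement that $\{0,1,\dots,m-1\}$ is a complete residue system modulo $m$. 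Granting this partition, summing the reindexed left-hand side over $j$ reproduces the single lattice sum defining $\partial\Theta_{s,p}(\tau)$, and the already-matched coefficients and exponents finish the proof. As a safeguard I would check a small instance, say $m=2$, to confirm that the reindexing loses no boundary terms and that the $\tfrac1m$ prefactor is correctly absorbed.
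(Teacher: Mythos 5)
Your proof is correct and is exactly the argument the paper has in mind: the paper merely asserts that the lemma ``follows easily by looking at $q$-expansion,'' and your substitution $N=mn$ together with the partition of $\Z+\tfrac{s}{2p}$ into the $m$ cosets $m\Z+j+\tfrac{s}{2p}$, $j=0,\dots,m-1$, is that $q$-expansion argument carried out in full (the coefficient bookkeeping $2pm^{2}n=2pmN$ absorbing the prefactor $\tfrac1m$ is also right). Your reading of the subscript as $m(s+2pj)$ rather than the literal $sm+2pj$ is the correct one and worth having made explicit: with the literal index the reindexed cosets would be shifted by $j/m\notin\Z$ and the identity would fail, whereas your version is precisely what is needed to rewrite the characters $P_{m(p-i+2pj),pm^{2}}$ and $Q_{m(p-i+2pj),pm^{2}}$ of the preceding theorem, whose first indices are all multiples of $m$.
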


By using this lemma and formula  (\ref{sym}), we see that
formula (\ref{lambda-111}) can be rewritten as
$${\rm ch}_{\Lambda^\pm(i)_j}(\tau)=\sum_{i} \lambda_i  \frac{\Theta_{im,pm^2}(\tau)}{\eta(\tau)}+\nu \frac{\partial \Theta_{i,p}(\tau)}{\eta(\tau)}. $$
for some choice of constants $\lambda_i$ and $\nu \neq 0$.

\begin{theorem} (Characters of $\Pi(i)^\pm_j$)
For $i=1,...,p$
\be \label{lambda-00}
{\rm ch}_{\Pi(i)_m}(\tau)=Q_{(p-2m-i)m,pm^2}+\cdots + Q_{(p(m-1)-i)m,pm^2}
\ee
\bea
{\rm ch}_{\Pi(i)^\pm_j}(\tau) &=& P_{(2pj-i)m,pm^2}+\cdots+P_{(2p(m-j)-i)m,pm^2} \nonumber \\
&+&Q_{(2p-2pj-i)m,pm^2}+\cdots+Q_{(2pj-2p-i)m,pm^2}.
\eea
\end{theorem}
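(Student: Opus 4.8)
The plan is to imitate, term for term, the displayed computation of ${\rm ch}_{\am}$ and the derivation of the $\Lambda$-series characters, now feeding in the Virasoro decompositions recorded in Propositions \ref{pi-even} and \ref{pi-odd}. The one input I need is that, exactly as in the computation of ${\rm ch}_{\am}$, the graded character of a single tower $\bigoplus_n (2n+1) L(c_{p,1}, h_{i+p, 2(nm+\kappa)+1})$ equals, after multiplication by $\eta(\tau)$, a difference of two theta-type sums of the same shape $\sum_{n\ge 0}(2n+1)q^{A_\kappa(n)} - \sum_{n\ge 1}(2n-1)q^{\tilde A_\kappa(n)}$, with the analogous expression for the even-multiplicity towers $\bigoplus_n (2n+2)L(\dots)$. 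Using ${\rm deg}\, e^{\gamma\alpha} = p\gamma^2 - (p-1)\gamma$ together with $h_{r,s} = \frac{(r-sp)^2 - (p-1)^2}{4p}$, I would first record that the leading exponent attached to $L(c_{p,1}, h_{i+p, 2(nm+\kappa)+1})$ is $pm^2 (n + \frac{(2p\kappa - i)m}{2pm^2})^2$, that is, precisely the $n$-th summand of $P_{(2p\kappa - i)m,\, pm^2}$.

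For ${\rm ch}_{\Pi(i)^\pm_j}$ I would then combine the towers just as the ``first and last terms'' were combined for ${\rm ch}_{\am}$: the reflection $n \mapsto -n$ carries the negative sum $-\sum_{n\ge1}(2n-1)q^{\tilde A_\kappa(n)}$ of the tower $\kappa$ onto the missing $n \le -1$ tail of the leading sum of the tower $m-1-\kappa$, the multiplicity $(2n-1)$ turning into $(2n'+1)$. For the odd-multiplicity (inner) towers $\kappa = j, \dots, m-j$ this produces $\frac{1}{\eta(\tau)}\sum_{n \in \mathbb{Z}}(2n+1)q^{A_\kappa(n)} = P_{(2p\kappa - i)m,\, pm^2}$, giving the contribution $P_{(2pj-i)m, pm^2} + \cdots + P_{(2p(m-j)-i)m, pm^2}$. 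For the even-multiplicity (outer) towers $\kappa = m-j+1, \dots, m+j-1$ the same reflection produces $Q$-functions; here the natural index $(2p\kappa - i)m$ must be brought into range via the periodicity $\kappa \sim \kappa - m$ (an $n$-shift by one), which sends it to $(2p(\kappa - m) - i)m$, so that as $\kappa - m$ runs over $1-j, \dots, j-1$ I obtain exactly $Q_{(2p-2pj-i)m, pm^2} + \cdots + Q_{(2pj-2p-i)m, pm^2}$. Adding the two groups yields the asserted formula.

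The module $\Pi(i)_m$ (present only for $m$ odd) carries in Proposition \ref{pi-odd} only even multiplicities $(2n)$, so the identical procedure yields a pure sum of $Q$-functions; the remaining work is bookkeeping, folding each index into the canonical window using the symmetry relations (\ref{sym}), namely $Q_{-\lambda,k} = -Q_{\lambda,k}$ and $Q_{2k+\lambda,k} = Q_{\lambda,k} - 2\Theta_{\lambda,k}/\eta(\tau)$ with $k = pm^2$.

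The part that needs genuine care — and where I expect the only real difficulty — is that the pairing $\kappa \leftrightarrow m-1-\kappa$ which completes each $\mathbb{Z}$-sum does \emph{not} preserve the inner/outer split: the leading sum of the top inner tower $\kappa = m-j$ is completed not by another inner tower but by a term of the outer block (and conversely), and only after invoking the periodicity $\kappa \sim \kappa \pm m$ do the indices line up. Consequently one cannot finish tower-by-tower; I would instead push every subtracted sum through the reflection first, turning all contributions into leading ($+q$) sums, and only then regroup by index. The two facts to verify in that regrouping are that the $n \ge 0$ and $n \le -1$ halves feeding a given index carry the \emph{same} multiplicity parity, so that each index closes up into a clean $P$ or a clean $Q$ rather than a mixture, and that the stray $\Theta_{\lambda, pm^2}/\eta(\tau)$ terms produced when out-of-range $Q$-indices are folded back via (\ref{sym}) cancel in the total, leaving the answer expressed solely through the $P$'s and $Q$'s listed above.
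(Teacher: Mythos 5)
Your overall strategy --- decompose into Virasoro towers, write each irreducible Virasoro character as a difference of two Verma characters, reflect the subtracted sums and regroup into full $\mathbb{Z}$-sums --- is exactly what the paper intends (it gives no separate proof for the $\Pi$-series, only the model computation for ${\rm ch}_{\am}$), and your identification of the leading exponent $A(N)=\tfrac{(2pN-i)^2}{4p}=pm^2\bigl(n+\tfrac{(2p\kappa-i)m}{2pm^2}\bigr)^2$ for $N=nm+\kappa$ is correct. The gap is in the one input you flagged as needing care: you carry over the $\Lambda$-series reflection to the $\Pi$-series. For $L(c_{p,1},h_{i,2l+1})$ with $1\le i\le p$ the first singular vector of the Verma module sits at level $(2l+1)i$, i.e.\ at the exponent $B(-l-1)$, so the subtracted sum of tower $\kappa$ feeds tower $m-1-\kappa$; that is the pairing used in the displayed computation of ${\rm ch}_{\am}$. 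But for the $\Pi$-towers one has $h_{p+i,2N+1}=h_{i,2N}$, so the first singular vector of $M(c_{p,1},h_{p+i,2N+1})$ is at level $2Ni$ (Kac label $(i,2N)$, not $(p+i,2N+1)$), i.e.\ at the exponent $A(-N)$ rather than $A(-N-1)$. The reflection on the tower index is therefore $N\mapsto -N$ and the pairing is $\kappa\leftrightarrow m-\kappa$, not $\kappa\leftrightarrow m-1-\kappa$.

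This is not a cosmetic point. With the correct pairing $\kappa\leftrightarrow m-\kappa$, the inner block $\{j,\dots,m-j\}$ (odd multiplicities) and the outer block $\{m-j+1,\dots,m+j-1\}\equiv\{1-j,\dots,j-1\}$ (even multiplicities) are each stable, every index closes into a clean $P$ or a clean $Q$ with no parity mismatch and no stray $\Theta/\eta$ terms, and the stated formula drops out directly; the ``genuine difficulty'' you isolate at the top inner tower simply does not occur. Conversely, if you run your regrouping with the pairing $\kappa\leftrightarrow m-1-\kappa$, the two facts you propose to verify at the end are false: already for $m=2$, $j=1$ one finds that the regrouped sum differs from $P_{(2p-i)m,pm^2}+Q_{-im,pm^2}$ by $-\tfrac{1}{\eta}\sum_{N\le -1}q^{A(N)}$, which does not vanish for $i\ne p$ since the exponents $A(N)$ are pairwise distinct. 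So as written the argument cannot be completed; replacing the subtracted exponent by $A(-N)$ (equivalently, using the embedding chain of $M(c_{p,1},h_{i,2N})$ rather than treating $p+i$ as if it were a first Kac label in the range $1,\dots,p$) repairs the proof and in fact makes it shorter than you anticipate.
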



\begin{theorem} (Characters of $R(i,j,k)$)
We have
$${\rm ch}_{R(i,j,k)}(\tau)=\frac{1}{\eta(\tau)} \sum_{s \in \mathbb{Z}} q^{pm^2(s-\frac{m(p-1-j-2kp)+i}{2m^2p})^2},$$
which also equals
$$\frac{\Theta_{m(p-1-j-2kp)+i,m^2p}}{\eta(\tau)}.$$
\end{theorem}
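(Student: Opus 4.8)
The plan is to compute the character of $R(i,j,k)$ as a sum of characters of the Fock spaces $M(1)\otimes e^{\beta}$ appearing in its decomposition, and then to complete the square in the exponent so that the resulting lattice sum is manifestly a theta function. First I would record the conformal weight of a pure momentum state. Using the conformal vector $\omega$, for $\beta=\frac{T}{2p}\alpha$ one gets $L(0)e^{\beta}=\frac{T(T-2p+2)}{4p}e^{\beta}$, which specializes at $T=i-1$ to the value $h_{i,1}$ already recorded in the text. Since $M(1)\otimes e^{\beta}$ is a free Fock space, its graded dimension is $\prod_{n\ge1}(1-q^n)^{-1}$ times $q$ to the lowest weight; inserting the shift by $c_{p,1}/24$ and recalling $\eta(\tau)=q^{1/24}\prod_{n\ge1}(1-q^n)$ yields
$$ {\rm ch}_{M(1)\otimes e^{\frac{T}{2p}\alpha}}(\tau)=\frac{q^{\,\frac{T(T-2p+2)}{4p}-\frac{c_{p,1}-1}{24}}}{\eta(\tau)}. $$
The essential simplification is the identity $\frac{T(T-2p+2)}{4p}-\frac{c_{p,1}-1}{24}=\frac{(T-(p-1))^2}{4p}$, which follows from $\frac{c_{p,1}-1}{24}=-\frac{(p-1)^2}{4p}$ and completing the square.

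Second, I would substitute the momenta occurring in $R(i,j,k)$. The summands are built on $e^{\frac{T_s}{2p}\alpha}$ with $T_s=j-\frac{i}{m}+2p(ms+k)$, $s\in\Z$, and since $L(0)$ is diagonal on the Fock grading the total character is the sum over $s$ of the expressions above. Writing $T_s-(p-1)=2pm\,s+B$ with $B=2pk+j-\frac{i}{m}-(p-1)$ and expanding, one checks
$$ \frac{(T_s-(p-1))^2}{4p}=pm^2\bigl(s-A\bigr)^2,\qquad A=-\frac{B}{2pm}=\frac{m(p-1-j-2kp)+i}{2m^2p}, $$
where both the quadratic and the constant terms in $s$ match automatically. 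This establishes the first displayed equality of the theorem.

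Finally, to recognize the lattice sum as a theta function I would appeal directly to the definition $\Theta_{\lambda,\kappa}(\tau)=\sum_{n\in\Z+\frac{\lambda}{2\kappa}}q^{\kappa n^2}$ with $\kappa=m^2p$. Setting $n=s-A$ identifies $\sum_{s\in\Z}q^{pm^2(s-A)^2}$ with $\Theta_{\lambda,m^2p}$ for $\lambda=-2\kappa A=-(m(p-1-j-2kp)+i)$; the sign is immaterial because the substitution $n\mapsto -n$ gives $\Theta_{\lambda,\kappa}=\Theta_{-\lambda,\kappa}$, so the sum equals $\Theta_{m(p-1-j-2kp)+i,\,m^2p}/\eta(\tau)$, which is the second displayed equality.

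There is no deep obstacle here: the content is bookkeeping. The one place that requires care is the normalization, namely tracking the shift by $c_{p,1}/24$ together with the $q^{1/24}$ hidden in $\eta$ so that the fractional exponent collapses to the clean square $\frac{(T-(p-1))^2}{4p}$, and keeping the two roles of the letter $k$ (the module index versus the theta-function level $m^2p$) carefully separate throughout.
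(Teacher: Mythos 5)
Your computation is correct and is exactly the bookkeeping the paper leaves implicit: the theorem is stated right after the decomposition $R(i,j,k)=\bigoplus_{s}M(1)\otimes e^{\frac{j-i/m}{2p}\alpha+(ms+k)\alpha}$ with no written proof, and summing the Fock-space characters, completing the square via $\frac{c_{p,1}-1}{24}=-\frac{(p-1)^2}{4p}$, and matching the shift to the theta index (up to the sign absorbed by $\Theta_{-\lambda,\kappa}=\Theta_{\lambda,\kappa}$) is precisely the intended argument. No issues.
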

Due to symmetry, we have
$${\rm ch}_{R(i,j,k)}(\tau)={\rm ch}_{R(m-i,2p-j-1,m-k)}(\tau).$$
We also have
$$\Theta_{\lambda+2m^2p,m^2p}=\Theta_{\lambda,mp^2}=\Theta_{-\lambda,mp^2}=\Theta_{2m^2p-\lambda,mp^2}.$$

Consider now the span of all twisted characters. By choosing $i,j$ and $k$ in the given range
we conclude that this space is spanned by
$$\frac{\Theta_{i,m^2p}}{\eta(\tau)},$$
where $i$ is not divisible by $m$. The total space of such characters is
$m^2 p-mp$-dimensional.


\begin{theorem} \label{modular} The modular closure of irreducible modules $\Lambda$, $\Pi$ and $R$ families (conjecturally all irreducible modules)
is $m^2p+2p-1$-dimensional
spanned by $$\frac{\Theta_{i,pm^2}(\tau)}{\eta(\tau)},$$
where $i=0,...,pm^2$,
$$ \frac{\partial \Theta_{i,p}(\tau)}{\eta(\tau)},$$
where $i=1,...,p-1$,
and
$$\frac{\tau \partial \Theta_{i,p}}{\eta(\tau)},$$
where $i=1,...,p-1$.
\end{theorem}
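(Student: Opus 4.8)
The plan is to prove the statement in three stages: (i) every character of the $\Lambda$, $\Pi$ and $R$ families already lies in the span $V$ of the listed functions; (ii) $V$ is stable under $SL(2,\mathbb{Z})$; and (iii) the $m^2p+2p-1$ listed functions are linearly independent. The reason the bound is $m^2p+2p-1$ rather than $m^2p+p$ is that the characters themselves span only the $\Theta_{i,pm^2}/\eta$ together with the $\partial\Theta_{i,p}/\eta$; the extra $p-1$ functions $\tau\partial\Theta_{i,p}/\eta$ are produced only upon taking the modular closure, and producing them cleanly is the crux of the argument.

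First I would reduce the characters. For the $R$-series this is immediate, since ${\rm ch}_{R(i,j,k)}=\Theta_{m(p-1-j-2kp)+i,m^2p}/\eta$ is already of the first type; running over admissible $(i,j,k)$ and using $\Theta_{\lambda,m^2p}=\Theta_{-\lambda,m^2p}=\Theta_{2m^2p-\lambda,m^2p}$ yields exactly the $\Theta_{i,pm^2}/\eta$ with $m\nmid i$ (an $m^2p-mp$-dimensional block). For the $\Lambda$- and $\Pi$-series I would start from the closed character formulas, rewrite each $P_{\lambda,pm^2}$ and $Q_{\lambda,pm^2}$ through $\Theta_{\lambda,pm^2}/\eta$ and $\partial\Theta_{\lambda,pm^2}/\eta$ using their definitions, and then invoke the collapse Lemma $\tfrac1m\sum_{j=0}^{m-1}\partial\Theta_{sm+2pj,pm^2}=\partial\Theta_{s,p}$ together with $\partial\Theta_{-s,p}=-\partial\Theta_{s,p}$ and $\partial\Theta_{s+2p,p}=\partial\Theta_{s,p}$. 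The character formulas are arranged precisely so that the derivative-theta contributions occur in complete residue orbits $\{sm+2pj\}$, hence collapse to $\partial\Theta_{s,p}/\eta$ with $s\in\{1,\dots,p-1\}$ (the values $s=0,p$ giving $0$), while the surviving ordinary thetas are the $\Theta_{im,pm^2}/\eta$ with $m\mid$ index. Together with the $R$-block this covers all $\Theta_{i,pm^2}/\eta$, $i=0,\dots,pm^2$, plus the $p-1$ functions $\partial\Theta_{i,p}/\eta$.

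Next I would verify modular invariance. Under $T:\tau\mapsto\tau+1$ all three types transform diagonally up to a root of unity, using $\Theta_{\lambda,k}(\tau+1)=e^{\pi i\lambda^2/2k}\Theta_{\lambda,k}$, the same phase for $\partial\Theta_{\lambda,k}$, and $\eta(\tau+1)=e^{\pi i/12}\eta$; the only mixing is $\tau\partial\Theta/\eta\mapsto(\tau+1)\partial\Theta/\eta$, which stays in $V$. The substantive computation is $S:\tau\mapsto-1/\tau$. Here $\Theta_{i,pm^2}/\eta$ maps, via the Jacobi transformation law and $\eta(-1/\tau)=\sqrt{-i\tau}\,\eta(\tau)$, into a $\mathbb{C}$-combination of the $\Theta_{i',pm^2}/\eta$, so the first block is $S$-stable. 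For the derivative thetas I would differentiate the Jacobi theta transformation in the elliptic variable at $z=0$; since $\partial\Theta=\tfrac1{\pi i}\partial_z\Theta(\,\cdot\,,z)|_{z=0}$ and the elliptic argument on the left is $z/\tau$, one obtains the weight-shifted law
\[
\frac{\partial\Theta_{\lambda,k}(-1/\tau)}{\eta(-1/\tau)}=\frac{\tau}{\sqrt{2k}}\sum_{\lambda'\bmod 2k}e^{-\pi i\lambda\lambda'/k}\,\frac{\partial\Theta_{\lambda',k}(\tau)}{\eta(\tau)},
\]
with \emph{no} admixture of ordinary thetas. Thus $\partial\Theta_{i,p}/\eta\mapsto\tau\cdot(\text{combination of }\partial\Theta_{i',p}/\eta)$, landing in the third type, and $\tau\partial\Theta_{i,p}/\eta\mapsto(-1/\tau)\cdot(\dots)$ returns into the second type. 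This is exactly why the $\tau\partial\Theta_{i,p}/\eta$ must be adjoined.

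Finally, for the dimension I would establish independence of the generators. The functions $\tau\partial\Theta_{i,p}/\eta$ are the only ones carrying an explicit polynomial factor of $\tau$, so they are independent of the pure $q$-series of the first two types; and the $\Theta_{i,pm^2}$ for $0\le i\le pm^2$ (respectively $\partial\Theta_{i,p}$ for $1\le i\le p-1$) have distinct leading exponents $i^2/4pm^2$ (resp.\ $i^2/4p$), giving internal independence of each family. To separate the first type from the second I would apply $S$ to a hypothetical relation $\sum a_i\Theta_{i,pm^2}/\eta+\sum b_j\partial\Theta_{j,p}/\eta=0$: by the laws above the first sum stays $\tau$-free while the second acquires the factor $\tau$, so comparing $\tau$-degrees forces $\sum b_j\,S(\partial\Theta_{j,p}/\eta)=0$; invertibility of the associated sine/Fourier matrix then gives all $b_j=0$, and the leading-exponent argument gives all $a_i=0$. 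The count is $(pm^2+1)+(p-1)+(p-1)=m^2p+2p-1$. The main obstacle throughout is the $S$-transformation of $\partial\Theta$: getting the weight and $\tau$-factor bookkeeping correct, and in particular confirming the absence of an ordinary-theta term, is precisely what forces the extra $p-1$ generators and simultaneously drives the independence argument.
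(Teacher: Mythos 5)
Your proposal is correct in outline and its first stage coincides with the paper's: reduce the $\Lambda$- and $\Pi$-characters via the definitions of $P_{\lambda,k}$, $Q_{\lambda,k}$ and the collapse lemma $\tfrac1m\sum_{j}\partial\Theta_{sm+2pj,pm^2}=\partial\Theta_{s,p}$, and read off the $R$-characters directly. Where you diverge is in how the remaining two ingredients are obtained. For the presence of $\partial\Theta_{i,p}/\eta$ and $\tau\partial\Theta_{i,p}/\eta$ in the closure, the paper does not redo any theta transformation computations: it simply observes that these functions already span the known modular closure of the characters of $\triplet$ itself (the generalized characters of its logarithmic modules, from the earlier triplet paper), and since $\triplet\supset\am$ contributes its characters to the family, the closure of the $\am$-family must contain them. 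Your explicit weight-$3/2$ transformation law for $\partial\Theta_{\lambda,k}$ under $S$ (no admixture of ordinary thetas, a single factor of $\tau$) is correct and proves the same thing from scratch; it also supplies the linear independence argument that the paper dismisses with ``well-known,'' so on those two points your route is more self-contained.

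The one place where your argument is genuinely softer than the paper's is the claim that the $\Lambda/\Pi$ block ``covers all $\Theta_{jm,pm^2}/\eta$ with $m\mid j$.'' Showing that the characters are linear combinations of these functions gives only the containment of the character span \emph{in} $V$; to get the reverse containment you must extract each individual $\Theta_{jm,pm^2}/\eta$ from those combinations, which requires the coefficient matrix of the $\Lambda/\Pi$ character formulas (restricted to the ordinary-theta block) to have full rank --- an invertibility check you assert but do not perform. The paper avoids this entirely with the identity
$$\Theta_{jm,pm^2}(\tau)=\sum_{n\in\mathbb{Z}}q^{pm^2\left(n+\frac{jm}{2pm^2}\right)^2}=\sum_{n\in\mathbb{Z}}q^{p\left(mn+\frac{j}{2p}\right)^2},$$
whose right-hand side, divided by $\eta(\tau)$, is visibly a sum of characters of irreducible $\am$-modules and hence already lies in the character span. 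You should either adopt this identity or supply the rank argument; without one of the two, the forward containment $V\subseteq(\text{modular closure})$ is incomplete for the indices divisible by $m$.
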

\begin{proof}
We have already seen that each character of $\Lambda$ and $\Pi$ type modules is expressible in terms of $\frac{\Theta_{jm, pm^2}(\tau)}{\eta(\tau)}$
and $\frac{\partial \Theta_{i,p}(\tau)}{\eta(\tau)}$, $i=1,...,p-1$. But we also know that the space of characters for the triplet vertex  algebra $\mathcal{W}(p)$ includes  $\frac{\partial\Theta_{i,p}(\tau)}{\eta(\tau)}$ and $\frac{\tau \partial\Theta_{i,p}(\tau)}{\eta(\tau)}$, so they must be included in the $SL(2,\mathbb{Z})$ closure
of $\am$. To finish the proof we only have to argue that each $$\frac{\Theta_{j,pm^2}(\tau)}{\eta(\tau)},$$
is included in the closure. Actually we will show that it lies in the span of irreducible characters. If $j$ is not divisible by $m$ this follows from consideration
of twisted modules preceding the theorem. If $j$ is divisible by $m$, then we observe that
 $$\Theta_{im,pm^2}(\tau)=\sum_{n \in \mathbb{Z}} q^{pm^2(n+\frac{im}{2pm^2})^2}=\sum_{n \in \mathbb{Z}}  q^{p(mn+\frac{i}{2p})^2}.$$
But the right hand side (when divided by $\eta(\tau)$), is a sum of characters of irreducible $\am$-modules. Finally, we use the well-known fact that the span of $\Theta_{i,pm^2}(\tau)$
is $pm^2+1$-dimensional.

\end{proof}

Based on the $m=2$ case and the known irreducible $\am$-modules we expect

\begin{conjecture}

\item[(1)] There are $2m^2p$ irreducible $\am$-modules up to isomorphism.

\item[(2)] The space of one-point functions on the torus is $m^2p+2p-1$-dimensional.


\end{conjecture}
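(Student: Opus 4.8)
The conjecture has two parts, and they are in fact tightly coupled: part (1) is the completeness statement of Conjecture \ref{conj-1}, while part (2) asserts that the modular closure computed in Theorem \ref{modular} already exhausts the space of torus one-point functions. The plan is to attack (1) through Zhu's algebra and (2) through Miyamoto's theory of pseudo-trace functions, and in both cases the decisive input is a classification of the simple $\am$-modules together with their logarithmic (projective) covers.

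For part (1), since $\am$ is $C_2$-cofinite by Theorem \ref{2a}, simple $\am$-modules are in bijection with simple $A(\am)$-modules via their lowest-weight space. Theorem \ref{parametri} already exhibits $2m^2p$ pairwise non-isomorphic simple modules, so completeness is equivalent to the assertion that $A(\am)$ has no further simple modules. The approach is to describe $A(\am)$ by generators $[E^{(m)}], [F^{(m)}], [H], [\omega]$ and relations, exactly as in Conjecture \ref{conj-2} for $m=2$: one expects $[E^{(m)}]^2 = [F^{(m)}]^2 = 0$ and $[H]^2 = P([\omega])$, supplemented by annihilation relations $\ell([\omega]) * [E^{(m)}] = \ell([\omega]) * [F^{(m)}] = 0$ and $h([\omega]) * [H] = 0$, where $\ell$ and $h$ are the polynomials whose roots are read off from the lowest conformal weights tabulated in Section \ref{konstrukcija-modula}. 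Once these relations are in hand, factoring $A(\am)$ over its radical produces exactly the block structure dictated by the tables, so that the only simple modules are the $2m^2p$ constructed ones.

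The conceptual guide for why no other simples arise is the orbifold meta-theorem of Section \ref{konstrukcija-modula}: writing $\am = \triplet \cap V_L^\sigma$ with $\sigma$ of order $m$ generating $A_m$, every simple $\am$-module should embed in a $\sigma^i$-twisted $\triplet$-module. The untwisted sector contributes the restrictions of $\Lambda(i)$ and $\Pi(i)$ (decomposed in Propositions \ref{lambda-even}--\ref{pi-odd}), and the twisted sectors $V_{L+\frac{j-i/m}{2p}\alpha}$ contribute the $R$-family. The main obstacle is that turning this heuristic into a proof requires an orbifold classification theorem for $C_2$-cofinite but \emph{irrational} (logarithmic) vertex algebras under a finite cyclic group, which is not available in the generality needed; the Dong--Li--Mason machinery applies only in the rational case. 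This forces the direct Zhu-algebra route, whose hardest step is computing the annihilating polynomials $\ell, h$ explicitly, and this rests on the constant-term identities of Conjecture \ref{conj-const} and Remark \ref{rema-const}, currently verified only in low rank.

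For part (2), I would invoke Miyamoto's theorem that for a $C_2$-cofinite vertex operator algebra the space of one-point functions on the torus is finite-dimensional, $SL(2,\Z)$-invariant, and spanned by the pseudo-trace functions attached to the (generalized) logarithmic modules. Theorem \ref{modular} shows the characters and generalized characters of the $\Lambda, \Pi, R$ families span an $m^2p+2p-1$-dimensional space, which gives the lower bound $m^2p+2p-1$ for the dimension. For the matching upper bound I would argue, as flagged in the remark following Conjecture \ref{conj-2}, that $\am$ produces no new logarithmic modules beyond restrictions of the logarithmic $\triplet$-modules of \cite{am2}: the pseudo-trace contribution then stays $2(p-1)$-dimensional, namely the functions $\partial \Theta_{i,p}/\eta$ and $\tau\, \partial \Theta_{i,p}/\eta$ for $i=1,\dots,p-1$ inherited from $\triplet$, while the ordinary characters fill out the full level-$pm^2$ theta space of dimension $pm^2+1$, and $(pm^2+1)+2(p-1)=m^2p+2p-1$. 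Establishing this upper bound rigorously again requires the classification of simples and their projective covers from part (1), so the two obstacles coincide: a complete logarithmic orbifold theory under $A_m$ would yield both statements simultaneously, and in its absence one can only verify everything explicitly, as carried out for $m=2$ and small $p$.
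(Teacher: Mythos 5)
The statement you were asked to prove is in fact a \emph{conjecture} in the paper: the authors never prove it, offering only evidence, namely the construction of $2m^2p$ pairwise non-isomorphic irreducibles (Theorem \ref{parametri}), the Zhu-algebra relations for $m=2$ in Conjecture \ref{conj-2} resting on the unproven constant-term identity of Conjecture \ref{conj-const} (verified only computationally for small $p$), and the modular-closure computation of Theorem \ref{modular} together with the remark that no new logarithmic modules should appear beyond restrictions of those of $\triplet$. Your proposal reconstructs exactly this strategy --- the Zhu-algebra route for part (1), Miyamoto pseudo-traces plus the ``no new logarithmic modules'' claim for part (2) --- and correctly pinpoints the same obstructions (no orbifold classification theory for irrational $C_2$-cofinite vertex algebras, unproven annihilating-polynomial relations), so your attempt matches the paper's approach, with the shared caveat that neither yours nor the paper's treatment constitutes an actual proof.
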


\section{Appendix: A construction of the automorphism $\Psi$ of  $\triplet$}
\label{automorphism}

Define the following singular vectors in $\triplet$:
$$ v_{n,i} ^{\pm}= (2n-i)! Q ^i e ^{-n \alpha} \pm (-1) ^{i}! Q ^{2n-i} e^{-n \alpha} $$
where $ n \in {\N}$, $i=0, \dots, n. $

Let $\langle  v_{n,i} ^{\pm} \rangle$ denotes the irreducible Virasoro submodule generated by $v_{n,i} ^{\pm}. $
Define the following $\Z_2$--graduation on $\triplet$:
\bea
&&\triplet ^{+} = \bigoplus_{n = 0} ^{\infty} \bigoplus_{i=0} ^n  \langle v_{n,i} ^+ \rangle,  \nonumber \\
&&  \triplet ^{-} = \bigoplus_{n = 0} ^{\infty} \bigoplus_{i=0} ^n \langle v_{n,i} ^- \rangle. \nonumber
 \eea
 Clearly,
 $$ \triplet = \triplet ^+ \bigoplus \triplet ^- . $$
 Let $ \Psi \in \mbox{End} (\triplet) $ such that
 $$ \Psi \vert _ { {\triplet} ^{\pm} } = \pm \mbox{Id}. $$

Let $ H = Q e ^{-\alpha}$, $ U^- = v_{1,0} ^- = 2 F - E$, $U^+ = 2 F + E$.

Note that $\triplet$ is strongly generated by  $\omega, H, U^{\pm}$, and that $\omega, U ^+ \in \triplet ^+$ and $H, U^- \in \triplet ^-. $
\begin{proposition} $\Psi$ is an automorphism of $\triplet$:
$$ \Psi \in \mbox{Aut} (\triplet). $$
\end{proposition}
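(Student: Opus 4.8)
The plan is to promote the $\Z_2$--decomposition $\triplet=\triplet^+\oplus\triplet^-$ to a genuine $\Z_2$--grading of the vertex algebra and then read off that $\Psi$ is an automorphism for free. By construction $\Psi$ is a linear involution ($\Psi^2=\mathrm{Id}$) that commutes with the Virasoro action and fixes the vacuum (since $\mathbf 1\in\triplet^+$). If we can show $(\triplet^{\epsilon})_{(n)}\,\triplet^{\delta}\subseteq\triplet^{\epsilon+\delta}$ for all $n\in\Z$ and all $\epsilon,\delta\in\Z_2$, then for homogeneous $a,b$ we get $\Psi(a_{(n)}b)=(-1)^{|a|+|b|}a_{(n)}b=(\Psi a)_{(n)}(\Psi b)$, so $\Psi\in\mathrm{Aut}(\triplet)$ with no further computation.

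To make the verification finite I would work with $A_\Psi=\{a\in\triplet:\ \Psi\,a_{(n)}\,\Psi^{-1}=(\Psi a)_{(n)}\ \text{for all } n\in\Z\}$. A routine application of weak associativity and the iterate formula shows that $A_\Psi$ is a vertex subalgebra and a linear subspace containing $\mathbf 1$; moreover $A_\Psi$ is $\Psi$--stable, because $\Psi^2=\mathrm{Id}$ forces $\Psi(\Psi a)_{(n)}\Psi^{-1}=a_{(n)}$ whenever $a\in A_\Psi$. Since a vertex subalgebra containing a strong generating set must be all of $\triplet$, and $\triplet$ is strongly generated by $\omega,H,U^{\pm}$ (equivalently by $\omega,e^{-\alpha},Qe^{-\alpha},Q^2e^{-\alpha}$), it suffices to place these generators in $A_\Psi$. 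For $\omega$ this is immediate from Virasoro--equivariance. Using $\Psi e^{-\alpha}=\tfrac12 Q^2e^{-\alpha}$ together with $\Psi$--stability, the membership $e^{-\alpha}\in A_\Psi$ automatically yields $Q^2e^{-\alpha}\in A_\Psi$; the remaining generator $Qe^{-\alpha}$ can be checked by the same method, or recovered as a product of $e^{-\alpha}$ and $Q^2e^{-\alpha}$ modulo Virasoro descendants of $\mathbf 1$. Thus the whole statement collapses to the single field identity $\Psi\,Y(e^{-\alpha},z)\,\Psi^{-1}=\tfrac12\,Y(Q^2e^{-\alpha},z)$.

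The heart of the argument, and the step I expect to be the real obstacle, is precisely this last identity, which I would test on the spanning vectors $Q^je^{-m\alpha}$ (enough, since $\Psi$ commutes with the Virasoro generators). On the left one rewrites $\Psi^{-1}Q^je^{-m\alpha}=\tfrac{(-1)^jj!}{(2m-j)!}Q^{2m-j}e^{-m\alpha}$, applies the explicit lattice operator $(e^{-\alpha})_{(n)}$, and then applies $\Psi$; on the right one computes $(Q^2e^{-\alpha})_{(n)}Q^je^{-m\alpha}$ directly. A bookkeeping of $\alpha$--charge and conformal weight shows that both sides are proportional to $Q^je^{-(m-1)\alpha}$, so the problem reduces to matching one scalar for each triple $(m,j,n)$. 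These scalars are exactly the residue/binomial constants produced by the lattice products $(e^{-\alpha})_{(n)}Q^{k}e^{-m\alpha}$, and the content of the proof is that the $i\leftrightarrow 2m-i$ symmetry hard--wired into $\Psi$ matches the combinatorics of the screening $Q$; this is a constant--term computation of the same flavor as Lemma \ref{4a}. Once the scalars are shown to agree for all $(m,j,n)$, we conclude $e^{-\alpha}\in A_\Psi$, hence $A_\Psi=\triplet$, so $\Psi$ is an automorphism, and $\Psi^2=\mathrm{Id}$ makes it of order two.
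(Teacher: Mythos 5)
Your overall strategy is the same as the paper's: reduce the automorphism property to a statement about how the modes of the strong generators $\omega, H, U^{\pm}$ interact with the $\Z_2$--decomposition $\triplet=\triplet^+\oplus\triplet^-$, using that a map fixing $\mathbf 1$ and intertwining the fields of a generating set intertwines everything. That reduction is fine, and it is exactly the first step of the paper's proof. The problem is that everything after the reduction --- which is the actual content of the proposition, carried out in the paper in Lemmas \ref{lem-1}, \ref{lem-2} and \ref{lem-3} --- is left as an expectation in your write-up ("once the scalars are shown to agree for all $(m,j,n)$, we conclude"). You have not proved the field identity you correctly identify as the heart of the matter; you have only asserted that it should follow from a constant-term computation "of the same flavor as Lemma \ref{4a}".

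Moreover, the shape you predict for that computation is wrong, which matters because it makes the task look easier than it is. The vector $(Q^2e^{-\alpha})_{(n)}Q^je^{-m\alpha}$ is \emph{not} proportional to the single singular vector $Q^je^{-(m-1)\alpha}$: it is a sum of Virasoro \emph{descendants} spread over several irreducible Virasoro submodules of the relevant charge sector (in the paper's notation, $U^+_j v^{\pm}_{n,i}$ lands in a sum of six submodules $\langle v^{\pm}_{n\pm1,\ast}\rangle+\langle v^{\pm}_{n,\ast}\rangle$), and the coefficients are elements $\overline{u_{-1}},\overline{u_0},\overline{u_1}$ of $U(Vir)$, not scalars. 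The real work is to show that the \emph{same} $U(Vir)$-coefficients appear on the $Q^i$-side and the $Q^{2n-i}$-side with the precise numerical factors dictated by $\Psi$; the paper achieves this by deriving the relations $\overline{u_0}'=n\,\overline{u_0}$, $\overline{u_1}'=(2n+1)\overline{u_1}$, $\overline{u_1}''=(2n+1)(n+1)\overline{u_1}$ from the known structure of $E_j e^{-n\alpha}$, $H_je^{-n\alpha}$, $e^{-\alpha}_je^{-n\alpha}$ and then recombining terms into $v^{\pm}_{\ast,\ast}$. Nothing in your proposal supplies these identities, so "matching one scalar for each triple $(m,j,n)$" would not go through as stated. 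A secondary, smaller gap: the claim that $Qe^{-\alpha}$ is recovered from products of $e^{-\alpha}$ and $Q^2e^{-\alpha}$ modulo Virasoro descendants of $\mathbf 1$ is plausible but also unproved, and the paper instead checks $H_j$ directly (Lemma \ref{lem-1}).
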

\begin{proof}
It suffices to check that the generators satisfy:
$$ H_j \triplet ^{\pm} \subset \triplet ^{\mp}, U^{+} _j \triplet ^{\pm} \subset \triplet ^{\pm}, \ U ^- _j \triplet ^{\pm} \subset \triplet^{\mp} $$
for every $j \in {\Z}. $ This will follow from Lemmas \ref{lem-1}, \ref{lem-2} and \ref{lem-3} below.
\end{proof}

\subsection{Some  lemmas}
\begin{lemma} \label{lem-1}
Let $j \in \Z$. Then
$$ H_j v_{n,i} ^{\pm} \in \langle v_{n-1,i-1} ^{\mp} \rangle + \langle v_{n,i} ^{\mp} \rangle + \langle v_{n+1,i+1} ^{\mp} \rangle. $$
\end{lemma}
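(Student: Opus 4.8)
The goal is to understand the action of the mode $H_j$ on the basis vectors $v_{n,i}^{\pm}$ and to show that, up to the sign-graded pieces, it lands in the three indicated Virasoro submodules $\langle v_{n-1,i-1}^{\mp}\rangle + \langle v_{n,i}^{\mp}\rangle + \langle v_{n+1,i+1}^{\mp}\rangle$. The first move is to recall that $H = Q e^{-\alpha}$ and that each $v_{n,i}^{\pm}$ is built from the singular vectors $Q^i e^{-n\alpha}$ and $Q^{2n-i} e^{-n\alpha}$. Since $H$ is itself a singular vector and its modes act as intertwining-type operators on $\overline{V_L}$, I would compute $H_j$ applied to the raw vectors $Q^i e^{-n\alpha}$ directly in the lattice vertex algebra $V_L$, using the explicit product formulas for vertex operators of the form $Y(Q e^{-\alpha}, z)$ acting on $e^{-n\alpha}$-type vectors.

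The key structural input is that $H_j (Q^i e^{-n\alpha})$ is again a linear combination of singular vectors, because the screening $Q = e^\alpha_0$ commutes with the Virasoro action (it is a derivation commuting with $\omega$) and because the $\alpha$-charge is additive: $H$ carries charge $-\alpha$, so $H_j(Q^i e^{-n\alpha})$ has $\alpha(0)$-eigenvalue corresponding to $e^{-(n+1)\alpha}$-type vectors shifted by the screening action. Tracking both the lattice charge (which controls the index $n$, giving $n\mapsto n\pm 1$ or $n$ after applying/removing one power of $Q$ and one $e^{-\alpha}$ factor) and the number of screenings applied (which controls $i$, shifting $i\mapsto i\pm 1$ or $i$) is what pins down that only the three listed submodules can appear. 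Concretely, I would first show $H_j(Q^i e^{-n\alpha}) \in \langle Q^{i-1} e^{-(n-1)\alpha}\rangle + \langle Q^{i} e^{-n\alpha}\rangle + \langle Q^{i+1} e^{-(n+1)\alpha}\rangle$ as Virasoro modules, and then symmetrize using the reflected vector $Q^{2n-i} e^{-n\alpha}$ to obtain the $\pm$-graded statement. The sign flip $\pm \to \mp$ should follow because $H \in \triplet^-$, so $H_j$ intertwines the two graded components, which is exactly the parity bookkeeping built into the definition of $v_{n,i}^{\pm}$.

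\textbf{Main obstacle.}
The hard part will be controlling the coefficients precisely enough to confirm that the image genuinely decomposes into the $\mp$-graded submodules and does not accidentally produce a stray $+$ component. This requires carefully pairing the term coming from $Q^i e^{-n\alpha}$ with the term coming from $Q^{2n-i} e^{-n\alpha}$ and checking that the relative signs $(-1)^i i!$ conspire correctly after the action of $H_j$. Because $f = -\Psi^{-1} Q \Psi$ and the explicit formula $\Psi(Q^i e^{-n\alpha}) = \frac{(-1)^i i!}{(2n-i)!} Q^{2n-i} e^{-n\alpha}$ are available, I would use these to cross-check the reflection symmetry rather than recompute everything by hand. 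The degeneracies that may need separate attention are the boundary cases $i=0$ (where the $\langle v_{n-1,i-1}^{\mp}\rangle$ term should be absent) and $i=n$ (where the reflected index $2n-i$ meets the direct index), and I would verify these by inspecting the relevant binomial/factorial coefficients, invoking Lemma \ref{zz} to ensure the screening powers stay in the admissible range $i \le 2n$.
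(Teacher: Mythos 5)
Your overall shape is right --- commute $Q^i$ past $H_j$ using the derivation property of $Q$ (so that $H_j Q^i e^{-n\alpha}=Q^i(H_je^{-n\alpha})-iQ^{i-1}(E_je^{-n\alpha})$, the higher commutators vanishing because $Q^2H=Q^3e^{-\alpha}=0$), expand $H_je^{-n\alpha}$ and $E_je^{-n\alpha}$ in the Virasoro submodules generated by singular vectors, and then pair the $Q^ie^{-n\alpha}$ and $Q^{2n-i}e^{-n\alpha}$ halves of $v_{n,i}^{\pm}$. But there is a genuine gap at exactly the point you flag as the ``main obstacle.'' The coefficients in the expansions
$$E_je^{-n\alpha}=\overline{u_{-1}}\,e^{-(n-1)\alpha}+\overline{u_0}\,Qe^{-n\alpha}+\overline{u_1}\,Q^2e^{-(n+1)\alpha},\qquad H_je^{-n\alpha}=\overline{u_0}'\,e^{-n\alpha}+\overline{u_1}'\,Qe^{-(n+1)\alpha}$$
are elements of $U(Vir)$, not scalars (so it is imprecise to say the image is ``a linear combination of singular vectors''), and the lemma only follows once one proves the relations $\overline{u_0}'=n\,\overline{u_0}$ and $\overline{u_1}'=(2n+1)\,\overline{u_1}$ on the relevant vectors. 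The paper obtains these by applying $Q^n$ (resp.\ $Q^{2n+1}$) to the two expansions and using that $H_jQ^ne^{-n\alpha}$ has no component in $\langle Q^ne^{-n\alpha}\rangle$. Without some such identity the contributions from the two halves of $v_{n,i}^{\pm}$ do not visibly recombine into $v_{n-1,i-1}^{\mp}$, $v_{n,i}^{\mp}$, $v_{n+1,i+1}^{\mp}$, and your plan does not supply a mechanism for producing it.

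Two further cautions. First, your charge bookkeeping is off: $H=Qe^{-\alpha}$ has $\alpha(0)$-charge $0$ (the screening adds $+\alpha$ back), so $H_j$ preserves the charge $(i-n)\alpha$ of $Q^ie^{-n\alpha}$; this is consistent with the three target modules but not with your statement that the image is of ``$e^{-(n+1)\alpha}$-type.'' Second, your proposed shortcut of invoking $\Psi$ and $f=-\Psi^{-1}Q\Psi$ to ``cross-check the reflection symmetry'' is circular in this context: Lemma \ref{lem-1} (together with Lemmas \ref{lem-2} and \ref{lem-3}) is precisely what is used to prove that $\Psi$ is an automorphism, so no compatibility of $\Psi$ with the modes $H_j$ may be assumed here. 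The boundary cases you mention ($i=0$, $i=n$) do take care of themselves once the explicit coefficients $-(2n-i)i\,\overline{u_{-1}}$, $(n-i)\,\overline{u_0}$, $\overline{u_1}$ are in hand, so that part of your plan is fine.
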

\begin{proof}
First we notice that
$$ H_j Q ^i e ^{-n \alpha}  = Q ^i (H_j e ^{-n \alpha} ) - i Q ^{i-1} (E_j e ^{-n \alpha}),$$
$$ H_j Q ^{2n-i} e ^{-n \alpha}= Q ^{2n-i}  (H_j e ^{-n \alpha} ) - (2n-i) Q ^{2n -i-1} (E_j e ^{-n \alpha}).$$
Let $ \overline{u_i} \in U(Vir)$, $i=-1,0,1$ such that
$$ E_j e ^{-n \alpha} = \overline{u_{-1}} e ^{-(n-1) \alpha} + \overline{u_0} Q e ^{-n \alpha} + \overline{u_1} Q^2 e ^{-(n+1)\alpha}.$$
Let $\overline{u_i}'$, $i=0,1$ such that
$$ H_j e ^{- n \alpha} = \overline{u_0}' e ^{-n \alpha} + \overline{u_1}' Q e ^{-(n+1) \alpha}. $$
Now we shall find relations between $\overline{u_i}'$ and $\overline{u_i}$.

By applying $Q ^n$ on the previous relation we get:
\bea
 && \overline{u_0}' Q ^n e ^{-n \alpha} + \overline{u_1} ' Q ^{n+1} e ^{-(n+1) \alpha} \nonumber \\
 = &&  Q ^n (H_j e ^{-n\alpha}) = n E_j Q ^{n-1} e ^{-n \alpha} + H_j Q ^n e ^{-n \alpha} \nonumber \\
 = && n (\overline{u_{-1}} Q ^{n-1} e ^{-(n-1) \alpha} + \overline{u_0} Q ^n e ^{-n \alpha} + \overline{u_1} Q^{n+1}  e ^{-(n+1)\alpha}) +  H_j Q ^n e ^{-n \alpha}. \nonumber
\eea
Since $H_j Q ^n e ^{-n \alpha}$ does not contain component inside $\langle Q ^n e ^{-n \alpha} \rangle$  (cf. \cite{am}) we conclude

$$\overline{u_0}' Q ^n e ^{-n \alpha}  =  n \overline{u_0}  Q ^{n} e ^{-n \alpha}. $$
This proves that
$$ \overline{u_0}'  Q^i e ^{-n \alpha}  = n \overline{u_0} Q ^{i} e ^{-n \alpha} \qquad i=0, \dots, 2n. $$
Similarly,
$$ \overline{u_1}' Q ^{2n+2} e ^{-(2n +2) \alpha} = Q ^{2n+1} H_j e ^{-n \alpha} = (2n+1) E_j Q ^{2n} e ^{-n \alpha} = (2n+1) \overline{u_1} Q ^{2n+2} e ^{-(2n+2) \alpha}.$$
This implies
$$ \overline{u_1}'  Q ^i e ^{-(n+1) \alpha} = (2n+1) \overline{u_1} Q ^i e ^{- (n+1) \alpha } \qquad i=0, \dots, 2n +2.$$

We get:
 \bea  H_j v_{n,i} ^{\pm} & = &  (2n-i) ! H_j Q ^i e ^{-n \alpha} \pm (-1) ^i i! H_j Q ^{2n-i} e ^{-n\alpha} \nonumber \\
& =& \overline{u_{-1}} \left( (-i) (2n-i)!  Q ^{i-1} e ^{-(n-1) \alpha} \pm (-1) ^i i! (-(2n-i)) Q ^{2n-i-1} e ^{-(n-1) \alpha}  \right)\nonumber \\
& +& \overline{u_{0}} (n-i) \left(  (2n-i)!  Q ^{i} e ^{-n \alpha} \mp (-1) ^i i!  Q ^{2n-i} e ^{-n \alpha}  \right)\nonumber \\
& + & \overline{u_1} \left( (2n+1-i)! Q ^{i+1} e ^{-(n+1) \alpha} \pm (-1) ^i (i+1)! Q ^{2n +1-i} e ^{-(n+1) \alpha} \right) \nonumber \\
& = & - (2n-i) i \ \overline{u_{-1}} \  v_{n-1,i-1} ^{\mp} + (n-i) \ \overline{u_0} \ v_{n,i} ^{\mp} + \overline{u_1} \ v_{n+1,i+1} ^{\mp}. \nonumber
\eea
The lemma follows.

\end{proof}

\begin{lemma} \label{lem-2}
Let $j \in \Z$. Then
$$ U ^+ _j v_{n,i} ^{\pm} \in \langle v_{n-1,i-2} ^{\pm} \rangle + \langle v_{n-1, i} ^{\pm} \rangle +  \langle v_{n,i-1} ^{\pm} \rangle + \langle v_{n, i+1} ^{\pm} \rangle +  \langle v_{n+1,i} ^{\pm} \rangle + \langle v_{n+1, i+2} ^{\pm} \rangle. $$
\end{lemma}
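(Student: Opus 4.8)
The plan is to follow the proof of Lemma \ref{lem-1}, writing $U^+ = 2F + E$ with $F = e^{-\alpha}$ and $E = Q^2 e^{-\alpha}$, and exploiting that the screening $Q=e$ is a derivation commuting with the Virasoro generators. First I would record the basic commutators. Since $QF = Qe^{-\alpha} = H$, $QH = Q^2 e^{-\alpha} = E$ and $QE = Q^3 e^{-\alpha} = 0$ (the last by Lemma \ref{zz}), the derivation property $[Q,a_j]=(Qa)_j$ gives $[Q,F_j]=H_j$, $[Q,H_j]=E_j$, $[Q,E_j]=0$; thus $\mathrm{ad}_Q$ acts on $\{F_j,H_j,E_j\}$ as a single size-three nilpotent block. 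Iterating yields the commute-through formulas
$$ E_j Q^i = Q^i E_j, \quad H_j Q^i = Q^i H_j - i\, Q^{i-1} E_j, \quad F_j Q^i = Q^i F_j - i\, Q^{i-1} H_j + \binom{i}{2} Q^{i-2} E_j, $$
the middle one being exactly the identity used in Lemma \ref{lem-1}, the last obtained by the same induction.

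Next I would set up the three building blocks. By the charge and $h$-weight gradings together with the degree bound (exactly as in Lemma \ref{lem-1}), there exist $\overline{a},\overline{b}_0,\overline{b}_1,\overline{c}_{-1},\overline{c}_0,\overline{c}_1 \in U(Vir)$, all commuting with $Q$, such that
$$ F_j e^{-n\alpha} = \overline{a}\, e^{-(n+1)\alpha}, \quad H_j e^{-n\alpha} = \overline{b}_0\, e^{-n\alpha} + \overline{b}_1\, Q e^{-(n+1)\alpha}, $$
$$ E_j e^{-n\alpha} = \overline{c}_{-1}\, e^{-(n-1)\alpha} + \overline{c}_0\, Q e^{-n\alpha} + \overline{c}_1\, Q^2 e^{-(n+1)\alpha}. $$
Lemma \ref{lem-1} already supplies $\overline{b}_0 = n\,\overline{c}_0$ and $\overline{b}_1 = (2n+1)\overline{c}_1$. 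The one new ingredient is the relation for $\overline{a}$: applying $Q^{2n+2}$ to both sides of the first identity and commuting the $Q$'s through $F_j$, every term drops out by $Q^{2n+1}e^{-n\alpha}=Q^{2n+2}e^{-n\alpha}=0$ and $Q^{2n}e^{-(n-1)\alpha}=0$ except the top one, leaving $\overline{a}\,Q^{2n+2}e^{-(n+1)\alpha} = \binom{2n+2}{2}\overline{c}_1\,Q^{2n+2}e^{-(n+1)\alpha}$, i.e. $\overline{a} = (n+1)(2n+1)\overline{c}_1$ on the relevant module.

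Finally I would assemble $U^+_j v_{n,i}^{\pm} = 2F_j v_{n,i}^{\pm} + E_j v_{n,i}^{\pm}$ by pushing every $Q$ onto $e^{-n\alpha}$ via the commute-through formulas and substituting the building blocks. The charge and $h$-weight gradings confine every resulting term to the six listed submodules, generated by $Q^{\bullet}e^{-(n\pm1)\alpha}$ and $Q^{\bullet}e^{-n\alpha}$; out-of-range singular vectors vanish and repeated ones are reduced via $v_{n,2n-i}^{\pm}=\pm(-1)^i v_{n,i}^{\pm}$. The decisive point is the $\Z_2$-grading: in each of the six modules the contribution of the generator $Q^k e^{-m\alpha}$ (coming from the $(2n-i)!\,Q^i e^{-n\alpha}$ half of $v_{n,i}^{\pm}$) and of its partner $Q^{2m-k}e^{-m\alpha}$ (coming from the $\pm(-1)^i i!\,Q^{2n-i}e^{-n\alpha}$ half) must recombine into $v_{m,k}^{\pm}$ with the \emph{same} sign, reflecting $U^+ \in \triplet^+$.

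I expect this sign-and-coefficient matching to be the main obstacle: it is not automatic, and succeeds only because the factorials $(2n-i)!$ and $(-1)^i i!$ in $v_{n,i}^{\pm}$ are tuned against the binomial coefficients in the commute-through formulas together with $\overline{b}_0 = n\overline{c}_0$, $\overline{b}_1=(2n+1)\overline{c}_1$, $\overline{a}=(n+1)(2n+1)\overline{c}_1$. For instance, in the $(m,k)=(n+1,i)$ module both partners reduce to the common operator $\overline{c}_1$ because $2\overline{a} - 2i\,\overline{b}_1 + 2\binom{i}{2}\overline{c}_1 = (2n+2-i)(2n+1-i)\overline{c}_1$, which is exactly the ratio of the two coefficients of $v_{n+1,i}^{\pm}$; the remaining five modules (including the analogous cancellation $-2(2n-i)\overline{b}_0+2\binom{2n-i}{2}\overline{c}_0 = -(2n-i)(i+1)\overline{c}_0$ responsible for the $v_{n,i+1}^{\pm}$ term) match in the same way. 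This establishes $U^+_j \triplet^{\pm}\subset\triplet^{\pm}$ and completes the verification of the inclusion.
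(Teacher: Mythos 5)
Your proposal is correct and takes essentially the same route as the paper's proof: you commute $F_j$ and $E_j$ through powers of $Q$ via the derivation property, express $F_j e^{-n\alpha}$, $H_j e^{-n\alpha}$, $E_j e^{-n\alpha}$ through Virasoro coefficients, derive the one new relation $\overline{a}=(n+1)(2n+1)\overline{c}_1$ (the paper's $\overline{u_1}''=(2n+1)(n+1)\overline{u_1}$) by hitting the identity with a top power of $Q$, and verify that the coefficients recombine into the vectors $v^{\pm}_{m,k}$ with matching signs. The only cosmetic difference is organizational: the paper cross-pairs $2F_j$ acting on the $Q^i$-half of $v_{n,i}^{\pm}$ with $E_j$ acting on the $Q^{2n-i}$-half (and vice versa) so that each group visibly collapses to a combination of $v^{\pm}$'s, whereas you check the recombination module by module.
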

\begin{proof}
First we notice that

\bea  e ^{-\alpha} _j  Q ^i e ^{-n \alpha} &=& Q ^i (e ^{-\alpha}_j e ^{-n \alpha} )- i Q ^{i-1} (H_j e ^{-n \alpha} ) + \frac{i (i-1) }{2} Q ^{i-2} E_j e ^{-n \alpha}, \nonumber \\
 e ^{-\alpha} _j  Q ^{2n-i}  e ^{-n \alpha} &=& Q ^{2n-i} (e ^{-\alpha}_j e ^{-n \alpha} )- (2n-i)  Q ^{2n- i-1} (H_j e ^{-n \alpha} ) + \nonumber \\ && +\frac{(2n-i) (2n-i-1) }{2} Q ^{2n -i -2} E_j e ^{-n \alpha}.\nonumber \eea
As in the previous Lemma, we define $\overline{u_i}, \overline{u_i}', \overline{u_i}''$ by
\bea
 E_j e ^{-n \alpha} &= & \overline{u_{-1}} e ^{-(n-1) \alpha} + \overline{u_0} Q e ^{-n \alpha} + \overline{u_1} Q^2 e ^{-(n+1)\alpha},\nonumber \\
 H_j e ^{- n \alpha} &=& \overline{u_0}' e ^{-n \alpha} + \overline{u_1}' Q e ^{-(n+1) \alpha}, \nonumber \\
 e ^{- \alpha} _j e ^{-n \alpha} &=& \overline{u_1}'' e ^{-(n+1) \alpha}. \nonumber
\eea
By the proof of  Lemma \ref{lem-1} we get:
\bea
&& \overline{u_0}'  Q ^i e ^{- n \alpha} = n \overline{u_0} \ Q ^i e ^{-n \alpha}, \nonumber \\
&&  \ \overline{u_1}' Q ^i e ^{-(n+1) \alpha} = (2n+1) \overline{u_1} \ Q ^i e ^{-(n+1) \alpha}, \nonumber \\
&& \ \overline{u_1}'' Q ^i e ^{-(n+1) \alpha} = (2n+1) (n+1) \overline{u_1} Q ^i e ^{-(n+1) \alpha}. \nonumber  \eea

Now we have
\bea
 && U_j ^+ v_{n,i} ^ {\pm}= \nonumber \\ && 2 (2n-i) ! e ^{-\alpha}_j Q ^i e ^{-n \alpha} \pm (-1) ^i i! E_j Q ^{2n-i} e ^{-n \alpha}   \label{rel-11} \\
 &&+ (2n-i)! E_j Q ^i e ^{-n \alpha} \pm 2 (-1) ^i i! e ^{-\alpha}_j Q ^{2n-i} e ^{-n \alpha} \label{rel-12}.
\eea
By direct calculation we see that the expression (\ref{rel-11}) is equal to
\bea
&& i (i-1) \overline{u_{-1}} \left( (2n-i)! Q ^{i-2}   e^{-(n-1) \alpha} \pm (-1) ^{i-2} (i-2)! Q ^{2n -i} e ^{-(n-1) \alpha} \right) +\nonumber \\
&& -i \overline{u_0} \left(  (2 n + 1- i )! Q ^{i-1} e ^{- n \alpha} \pm (-1) ^{i-1} (i-1) ! Q ^{2n+1-i} e ^{-n \alpha} \right) + \nonumber \\
&& \overline{u_1} \left(  (2 n+2-i)! Q ^{i} e ^{- (n+1) \alpha} \pm (-1) ^{i} i!  Q ^{2n+2-i} e ^{-n \alpha} \right) + \nonumber \\
=&& i (i-1) \overline{u_{-1}} v_{n-1,i-2} ^{\pm} - i \overline{u_0} v_{n,i-1} ^{\pm} + \overline{u_1} v_{n+1,i} ^{\pm} . \nonumber
\eea
Similarly, the expression (\ref{rel-12}) is equal to
$$  \left( (2n-i ) (2n-i-1)  \overline{u_{-1}} \ v_{n-1,i} ^{\pm} - (2n-i) \overline{u_0} \ v_{n,i+1} ^{\pm} + \overline{u_{1}} v_{n+1,i+2} ^{\pm} \right) . $$
The lemma follows.
\end{proof}

The proof of the following lemma is completely analogous to the previous case.

\begin{lemma} \label{lem-3}
Let $j \in \Z$. Then
$$ U ^- _j v_{n,i} ^{\pm} \in \langle v_{n-1,i-2} ^{\mp} \rangle + \langle v_{n-1, i} ^{\mp} \rangle +  \langle v_{n,i-1} ^{\mp} \rangle + \langle v_{n, i+1} ^{\mp} \rangle +  \langle v_{n+1,i} ^{\mp} \rangle + \langle v_{n+1, i+2} ^{\mp} \rangle. $$
\end{lemma}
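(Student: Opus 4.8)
The plan is to imitate the proof of Lemma \ref{lem-2} line by line, exploiting that $U^- = 2F - E$ is obtained from $U^+ = 2F + E$ by reversing the sign of the top vector $E = Q^2 e^{-\alpha}$ (while $F = e^{-\alpha}$ and $H = Qe^{-\alpha}$ are as in the appendix). Since $U^-_j = 2F_j - E_j$ and $U^+_j = 2F_j + E_j$ agree except for the sign in front of $E_j$, every mode expansion and every auxiliary relation invoked in Lemma \ref{lem-2} is available verbatim; the only thing that changes is the final sign bookkeeping, and it is precisely this sign reversal that turns the $v_{\bullet,\bullet}^\pm$ produced there into the $v_{\bullet,\bullet}^\mp$ required here. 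This also matches the conceptual expectation that $U^- \in \triplet^-$ whereas $U^+ \in \triplet^+$.

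Concretely, I would first recall the two commutator identities already established. Because $Q$ is a derivation with $QF = H$, $QH = E$ and $QE = 0$, one has $F_j Q^i e^{-n\alpha} = Q^i(F_j e^{-n\alpha}) - i\, Q^{i-1}(H_j e^{-n\alpha}) + \tfrac{i(i-1)}{2}\, Q^{i-2}(E_j e^{-n\alpha})$ and, more simply, $E_j Q^k e^{-n\alpha} = Q^k(E_j e^{-n\alpha})$. Writing, exactly as in Lemmas \ref{lem-1} and \ref{lem-2}, the decomposition $E_j e^{-n\alpha} = \overline{u_{-1}}\, e^{-(n-1)\alpha} + \overline{u_0}\, Q e^{-n\alpha} + \overline{u_1}\, Q^2 e^{-(n+1)\alpha}$, together with the scalar relations $\overline{u_0}' Q^i e^{-n\alpha} = n\,\overline{u_0}\, Q^i e^{-n\alpha}$, $\overline{u_1}' Q^i e^{-(n+1)\alpha} = (2n+1)\,\overline{u_1}\, Q^i e^{-(n+1)\alpha}$ and $\overline{u_1}'' Q^i e^{-(n+1)\alpha} = (2n+1)(n+1)\,\overline{u_1}\, Q^i e^{-(n+1)\alpha}$, I would expand $U^-_j v_{n,i}^\pm = 2F_j v_{n,i}^\pm - E_j v_{n,i}^\pm$ and regroup it into the same two blocks as in Lemma \ref{lem-2}: block~A collecting $F$ on $Q^i e^{-n\alpha}$ with $E$ on $Q^{2n-i}e^{-n\alpha}$, and block~B collecting $E$ on $Q^i e^{-n\alpha}$ with $F$ on $Q^{2n-i}e^{-n\alpha}$.

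The decisive and only delicate point is the sign, so this is where I expect to have to be careful. In each block one of $F,E$ supplies the "low-power'' halves $Q^{i'}e^{-n'\alpha}$ of the output vectors and the other supplies the "high-power'' halves $Q^{2n'-i'}e^{-n'\alpha}$; since the $F$-coefficients are unchanged and only the $E$-coefficient flips, the relative sign of the two halves is reversed, so each $v_{\bullet,\bullet}^\pm$ of Lemma \ref{lem-2} becomes a scalar multiple of $v_{\bullet,\bullet}^\mp$. In block~A the $E$-term feeds the high halves, so the outer scalars are untouched and only the superscript flips; in block~B the $E$-term feeds the low halves, so the outer scalars additionally change sign. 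Carrying this through gives
\[ U^-_j v_{n,i}^\pm = i(i-1)\,\overline{u_{-1}}\, v_{n-1,i-2}^\mp - i\,\overline{u_0}\, v_{n,i-1}^\mp + \overline{u_1}\, v_{n+1,i}^\mp - (2n-i)(2n-i-1)\,\overline{u_{-1}}\, v_{n-1,i}^\mp + (2n-i)\,\overline{u_0}\, v_{n,i+1}^\mp - \overline{u_1}\, v_{n+1,i+2}^\mp, \]
which lies in the asserted sum of six submodules; the precise scalars (in particular the signs in the second block) are immaterial for the membership statement. No further obstacle is expected: once the $U^+$ computation is in hand the algebra is forced, and the parity reversal $\pm \mapsto \mp$ is exactly the content of the lemma.
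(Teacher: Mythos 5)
Your proof is correct and follows exactly the route the paper intends: the paper's own proof of this lemma is just the remark that it is ``completely analogous to the previous case,'' and your sign bookkeeping (block A inheriting the superscript flip from the $E$-term on the high powers, block B picking up an extra overall sign from the $E$-term on the low powers) is precisely the analogy carried out, with the resulting formula $U^-_j v_{n,i}^{\pm} = i(i-1)\,\overline{u_{-1}}\,v_{n-1,i-2}^{\mp} - i\,\overline{u_0}\,v_{n,i-1}^{\mp} + \overline{u_1}\,v_{n+1,i}^{\mp} - (2n-i)(2n-i-1)\,\overline{u_{-1}}\,v_{n-1,i}^{\mp} + (2n-i)\,\overline{u_0}\,v_{n,i+1}^{\mp} - \overline{u_1}\,v_{n+1,i+2}^{\mp}$ matching what one gets by rerunning the Lemma \ref{lem-2} computation with $U^- = 2F - E$.
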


\section{Appendix B}

In this section we relate our conjectural constant term identities in Conjecture \ref{conj-const} and Remark \ref{rema-const} to some known combinatorial identities in the literature \cite{k} (for an excellent review see also \cite{FW}).

Let $S(t,r,p)$ be the residue as in Remark \ref{rema-const}. We easily see
\bea
&& S(t,3,p)=\mbox{Res}_{z,z_1,z_2,z_3}\frac{(1-z) ^{2p-1-t}  (1-z_1) ^t (1-z_2) ^t (1-z_3) ^t}{z^{2+2p} (z_1 z_2 z_3) ^{4p} } \nonumber \\
&& \cdot (1-\frac{z_1}{z}) ^{-2p} (1-\frac{z_2}{z}) ^{-2p}  (1-\frac{z_3}{z}) ^{-2p}  (z_1-z_2) ^{2p} (z_1-z_3) ^{2p} (z_2-z_3) ^{2p} \nonumber \\
&& = {\rm Res}_z  \biggl\{ {\rm CT}_{z_1,z_2,z_3}  (-1)^{3p+1} \frac{(1-z)^{2p-1-t}}{z^{2+2p}} \prod_{1 \leq i < j \leq 3} (1-z_i/z_j)^p (1-z_j/z_i)^p \nonumber \\
&& \prod_{i=1}^3 (1-z_i)^{t-2p+1} \prod_{i=1}^3 (1-z_i/z)^{-2p} \prod_{i=1}^3 (1-\frac{1}{z_i})^{2p-1} \biggr\}. \nonumber
\eea

We will be using (slightly normalized) Jack polynomials $s_{\lambda}^k$ as in Kadell's paper \cite{k}. Let $k$ be a parameter
and $z=(z_1,..)$ and $y=(y_1,...)$ two sets of variables.
Then we recall the Cauchy product expansion formula for Jack polynomials (\cite{k}, \cite{FW}):

$$\prod_{i,j} \frac{1}{(1-z_i y_j)^{k}}=\sum_{\lambda} \alpha_{\lambda}^k s_\lambda^k(z) s_{\lambda}^k(y).$$
where
$$\alpha_\lambda^k=\prod_{(i,j) \in \lambda} \frac{h_{i,j}^{k,k}(\lambda)}{h_{i,j}^{k,1}(\lambda)},$$
and
$$h_{i,j}^{k,a}=( A_{i,j}(\lambda)+k L_{i,j}(\lambda)+a),$$
where $A_{i,j}(\lambda)$ and $L_{i,j}(\lambda)$ are the arm and
leg length of the partition $\lambda$ computed at the position $(i,j)$, respectively.
Now we specialize  $k=p$ (as before), $y_j=\frac{1}{z}$, with $j=1,2$. Thus we obtain a sum over partitions in at most two parts:

$$\prod_{i=1}^n \frac{1}{(1-\frac{z_i }{z})^{2p}}=\sum_{\lambda=(\lambda_1,\lambda_2)} \alpha_{\lambda}^k  s_\lambda^p(z_1,z_2,\cdots,z_n)
s_\lambda^p(\frac{1}{z},\frac{1}{z}).$$
If we specialize $n=3$, we obtain
$$\alpha^p_{\lambda_1,\lambda_2}= \frac{(p)_{\lambda_1+2p} (p)_{\lambda_2+p}(\lambda_1-\lambda_2+1)_p (\lambda_2+1)_p (\lambda_1+p+1)_p}{(\lambda_1+2p)! (\lambda_2+p)! f_3^p(\lambda)},$$
with $(x)_a=x(x+1)\cdots (x+a-1)$ the usual Pochhammer symbol and $f^p_r(\lambda)=\prod_{1 \leq i < j \leq r}(\lambda_i-\lambda_j+(i-j)p)_p$.

The next result is well-known.
 \begin{lemma}
 \[
 s^{p}_{\lambda}(z,\dots,z)=z^{|\lambda|}
 \prod_{1\leq i<j\leq n} \frac{((j-i)p+\lambda_i-\lambda_j)_p}{((j-i)p)_p}.
 \]
 Hence
 \[
 s^{p}_{\lambda}(z^{-1},z^{-1})
 =z^{-|\lambda|} \frac{(p+\lambda_1-\lambda_2)_p}{(p)_p}
 =z^{-|\lambda|} \frac{\binom{\lambda_1-\lambda_2+2p-1}{p}}{\binom{2p-1}{p}}.
 \]

 \end{lemma}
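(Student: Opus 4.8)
The plan is to treat the two displayed formulas separately: the first is the full principal specialization of the Jack polynomial $s_\lambda^p$, and the second is merely its two-variable instance followed by an elementary rewriting into binomial coefficients. Since the surrounding argument only ever uses the two-variable evaluation (the Cauchy expansion there runs over partitions with at most two parts), the second formula is what is actually needed, but I would prove the general statement first and then read off the rest.

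First I would use homogeneity. The polynomial $s_\lambda^p$ is homogeneous of degree $|\lambda|$, so $s_\lambda^p(z,\dots,z) = z^{|\lambda|}\, s_\lambda^p(1,\dots,1)$, and the whole first formula reduces to the evaluation at all variables equal to one, $s_\lambda^p(1^n)$. I would then point out that the claimed product $\prod_{1\le i<j\le n}\frac{((j-i)p+\lambda_i-\lambda_j)_p}{((j-i)p)_p}$ is the Jack-polynomial analogue of the Weyl dimension formula: at $p=1$ every Pochhammer symbol collapses to a single factor and one recovers the Schur specialization $s_\lambda(1^n)=\prod_{i<j}(\lambda_i-\lambda_j+j-i)/(j-i)$, which is a useful sanity check on the target expression.

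Next I would establish the principal specialization itself. The value $s_\lambda^p(1^n)$ is classical (see \cite{k,FW}); in the reference normalization it is a ratio of a product of cell contents to a lower-hook product. The one genuine task is the normalization bookkeeping, namely passing from the quoted normalization to Kadell's $s_\lambda^k$, which is pinned down by the Cauchy coefficient $\alpha_\lambda^k=\prod_{(i,j)\in\lambda}h^{k,k}_{i,j}/h^{k,1}_{i,j}$ recorded above. Once the normalizations are matched, I would regroup the product of contents over cells of $\lambda$ into the stated product over row-pairs; this regrouping is the standard telescoping rearrangement that converts the content form of the dimension formula into its product-over-pairs form, now carried out with rising factorials of step length $p$ instead of single factors. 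I expect this matching-and-regrouping step to be the main obstacle: it is purely bookkeeping, but it must be done with care to land exactly on $((j-i)p+\lambda_i-\lambda_j)_p/((j-i)p)_p$ and not on a shifted or differently normalized variant.

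Finally I would deduce the \emph{Hence} statement. Setting $n=2$ leaves a single pair $(i,j)=(1,2)$ with $j-i=1$, so $s_\lambda^p(z^{-1},z^{-1})=z^{-|\lambda|}(p+\lambda_1-\lambda_2)_p/(p)_p$. The conversion to binomial coefficients is immediate: writing $d=\lambda_1-\lambda_2$, one has $(p+d)_p=(d+p)(d+p+1)\cdots(d+2p-1)=p!\binom{d+2p-1}{p}$, and specializing $d=0$ gives $(p)_p=p!\binom{2p-1}{p}$, whence the quotient equals $\binom{\lambda_1-\lambda_2+2p-1}{p}/\binom{2p-1}{p}$, as claimed. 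As an independent check, the two-variable case can be obtained without the general machinery at all, since a two-row Jack polynomial factors as a power of $z_1z_2$ times a one-row (Gegenbauer-type) Jack polynomial whose value at equal arguments is a single closed form; comparing this with the above confirms the normalization constant.
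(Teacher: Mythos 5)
Your argument is correct, and there is in fact nothing in the paper to compare it against: the authors state this lemma with the single remark that it is ``well-known'' and give no proof. Your route --- homogeneity to reduce $s^p_\lambda(z,\dots,z)$ to $z^{|\lambda|}s^p_\lambda(1^n)$, the classical product-over-pairs form of the principal specialization of Jack polynomials, then the $n=2$ specialization together with the elementary identities $(p+d)_p=p!\binom{d+2p-1}{p}$ and $(p)_p=p!\binom{2p-1}{p}$ --- is the standard one, and every step you spell out checks (including the $p=1$ Weyl-dimension-formula sanity check). Amusingly, your proposed ``independent check'' via the two-row-to-one-row reduction is precisely the derivation the authors left commented out in the source: they shift out $(z_1z_2)^{\lambda_2}$, evaluate the one-row two-variable Jack polynomial at equal arguments as an explicit sum, and finish by a hypergeometric summation. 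The one step you defer rather than execute is verifying that Kadell's normalization $s^k_\lambda$ (pinned down by the Cauchy coefficient $\alpha^k_\lambda$) is exactly the normalization in which the product-over-pairs specialization holds with no extra cell-dependent factor; you are right that this is the only genuine piece of work, it is routine bookkeeping against \cite{k} or \cite{FW}, and it is the same point the paper itself leaves implicit.
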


We also need the following important result, a special case of the main result of Kadell in \cite{k}:

\begin{proposition}
$${\rm CT}_{z_1,z_2,z_3}  \left\{ s^p_{\lambda_1,\lambda_2}(z_1,z_2,z_3) \prod_{i < j} (1-z_i/z_j)^p (1-z_j/z_i)^p
\prod_{i=1}^3 (1-z_i)^{t-2p+1} (1-\frac{1}{z_i})^{2p-1} \right\}$$
$$=\frac{6 f_3^p(\lambda) (-1)^{\lambda_1+\lambda_2} (t+2p)! (t+p)! t!}{(t+1+\lambda_1)!(t+1-p+\lambda_2)!(t-2p+1)!(2p-1-\lambda_1)!(3p-1-\lambda_2)!(4p-1)!}$$
$$=\frac{ 6 (2p-1)! (2p-1-\lambda_2)! f_3^p(\lambda) (-1)^{\lambda_1+\lambda_2} {t + 2p \choose  2p-1-\lambda_1}{t +p \choose 2p-1-\lambda_2}{t \choose 2p-1}}{(4p-1)! (3p-1-\lambda_2)!}$$
\end{proposition}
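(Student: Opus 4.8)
The plan is to recognize this identity as a direct specialization of Kadell's Jack-polynomial generalization of the Morris constant term identity (equivalently, of the Selberg integral), and then to carry out the bookkeeping that turns Kadell's general closed form into the two stated expressions. Kadell's theorem evaluates the constant term of a single Jack polynomial against the Morris density
$$\prod_{i<j}(1-z_i/z_j)^k (1-z_j/z_i)^k \prod_{i=1}^n (1-z_i)^a (1-1/z_i)^b$$
as the Morris constant (the $\lambda=\emptyset$ value) times an explicit product over the cells of $\lambda$ built from arm and leg lengths. The first step is to read off the specialization: $n=3$ variables, Jack parameter $k=p$, and the two exponents $a=t-2p+1$ and $b=2p-1$, with $\lambda=(\lambda_1,\lambda_2,0)$ a partition in at most two parts. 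These are exactly the data appearing inside the braces, so no manipulation of the integrand is needed beyond this identification.

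The second step is to substitute into Kadell's formula. The factor $f_3^p(\lambda)=\prod_{1\le i<j\le 3}(\lambda_i-\lambda_j+(i-j)p)_p$ is precisely the piece coming from the interaction of the partition with the three-variable Vandermonde-type density, and it reproduces the $f_3^p(\lambda)$ visible in the answer. The sign $(-1)^{\lambda_1+\lambda_2}=(-1)^{|\lambda|}$ arises from the reciprocal factors $(1-1/z_i)^{b}$, against whose negative-power part the Jack polynomial must be paired to produce a constant term; tracking this sign carefully is one point where conventions must be pinned down. The remaining factorials in the first displayed answer are then assembled from the Morris evaluation together with the cell products, which collapse to ratios of Pochhammer symbols precisely because $\lambda$ has at most two nonzero parts.

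The third step is purely algebraic: the passage from the factorial expression to the binomial-coefficient expression on the last line is a routine rewriting, using ${t+2p \choose 2p-1-\lambda_1}=\frac{(t+2p)!}{(t+1+\lambda_1)!\,(2p-1-\lambda_1)!}$ and the analogous identities for the other two binomials, together with the prefactor $\frac{(2p-1)!\,(2p-1-\lambda_2)!}{(3p-1-\lambda_2)!}$ absorbing the leftover Pochhammer products. No summation is involved here, only matching of factorials.

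I expect the main obstacle to be the precise normalization matching rather than the algebra. Kadell's paper uses a specific normalization of the Jack symmetric functions $s^k_\lambda$ and a specific orientation of the Morris density, so one must confirm that the normalized $s^p_{\lambda_1,\lambda_2}$ used here agrees with his, that the roles of the two exponents $t-2p+1$ and $2p-1$ are not interchanged in his convention, and that his cell-product telescopes to exactly the displayed ratio of factorials when $\lambda=(\lambda_1,\lambda_2)$. Once these conventions are aligned, the first equality follows by direct substitution and the second is immediate.
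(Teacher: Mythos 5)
Your proposal takes exactly the same route as the paper: the paper offers no proof at all, simply asserting the proposition to be "a special case of the main result of Kadell" in the cited reference, which is precisely the specialization ($n=3$, Jack parameter $k=p$, exponents $t-2p+1$ and $2p-1$, partition $\lambda=(\lambda_1,\lambda_2,0)$) that you identify. Your plan is correct — and in fact more explicit than the paper, since you also spell out the normalization issues and verify that the second displayed equality is a trivial factorial-to-binomial rewriting.
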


Putting everything together,  we obtain
$$S(t,3,p)=\sum_{\lambda_1 \geq \lambda_2 \geq 0} {\rm Res}_{z} \sum_{i=0}^\infty (-1)^i {2p-1-t \choose i} z^{i-2-2p} $$
$$ \cdot c_{\lambda_1,\lambda_2}^p (-1)^{|\lambda|} {t + 2p \choose  2p-1-\lambda_1}{t +p \choose 2p-1-\lambda_2}{t \choose 2p-1}(\frac{1}{z})^{\lambda_1+\lambda_2}$$
$$=\sum_{\lambda_1 \geq \lambda_2 \geq 0} c_{\lambda_1,\lambda_2}^p (-1)^{|\lambda|}  {t+\lambda_1+\lambda_2+1 \choose 2p+1+\lambda_1+\lambda_2} {t + 2p \choose  2p-1-\lambda_1}{t +p \choose 2p-1-\lambda_2}{t \choose 2p-1}$$

where

$$c_{\lambda_1,\lambda_2}^p=\frac{6 (2p-1)! (2p-1-\lambda_2)!  {\lambda_1-\lambda_2+2p-1 \choose p} (p)_{\lambda_1+2p} (p)_{\lambda_2+p}(\lambda_1-\lambda_2+1)_p (\lambda_2+1)_p (\lambda_1+p+1)_p}{(-1)^{p+1} (\lambda_1+2p)! (\lambda_2+p)! {2p-1 \choose p} (4p-1)! (3p-1-\lambda_2)!}.$$

\begin{remark}
By using Kadell's identity, similar identities (involving summation over partitions into two parts) can be derived for all $S(t,r,p)$.

\end{remark}

\end{document}